\theoremstyle{plain}
\newtheorem{theorem}{Теорема}[section]
\newtheorem{lemma}{Лемма}[section]
\newtheorem{statement}{Утверждение}[section]
\newtheorem{conclusion}{Следствие}[section]
\theoremstyle{definition}
\newtheorem{example}{Пример}[section]
\newtheorem{definition}{Определение}[section]
\newtheorem{remark}{Замечание}
\numberwithin{equation}{section}
\newcommand{\ve}{\varepsilon}
\newcommand{\ra}{\rightarrow}
\newcommand{\bi}{\bigoplus}
\newcommand{\wt}{\widetilde}
\newcommand{\ol}{\overline}
\newcommand{\ls}{\leqslant}
\newcommand{\gs}{\geqslant}
\newcommand{\tmd}{\textmd}
\newcommand{\mc}{\mathcal}
\newcommand{\tbf}{\textbf}
\newcommand{\upr}{\upharpoonright}
\begin{document}

\title{О замкнутости суммы $n$ подпространств гильбертова пространства}
\author{И.С.Фещенко}

\begin{abstract}
В работе изучаются необходимые и достаточные условия
для того, чтобы сумма подпространств $H_{1},\ldots,H_{n}\,(n \gs 2)$
гильбертова пространства $H$ была подпространством, а также различные
свойства $n$-ок подпространств с замкнутой суммой.
\end{abstract}
\maketitle

\section{Введение}
Изучение систем $L=(V;V_{1},...,V_{n})$ $n$ подпространств $V_{1},...,V_{n}$
линейного пространства $V$, в частности, описание неразложимых четвёрок
подпространств $V$
(с точностью до эквивалентности), описание неразложимых представлений в пространстве
$V$ конечных частично упорядоченных множеств и т.д. являются классическими задачами
алгебры
(см., например, библиографию в \cite{SamStr}).

Пусть $H$ "--- комплексное гильбертово пространство, а $H_{i},\,1\ls i\ls n$ "--- набор его подпространств.
Изучение
системы подпространств $S=(H;H_{1},\ldots,H_{n})$ гильбертова пространства $H$
(или, что тоже самое, наборов соответствующих ортопроекторов
$P_{1},\ldots,P_{n}$) является
важной задачей
функционального анализа,
которой посвящены многочисленные публикации (см. например
\cite{SamStr} и библиографию там).

Пусть $H$ "--- комплексное гильбертово пространство, $H_{1},\ldots,H_{n}$ "---
набор его подпространств. Если
$\dim H<\infty$, то сумма $H_{1}+\ldots+H_{n}$ замкнута в $H$. В бесконечномерном гильбертовом
пространстве это утверждение, вообще говоря, неверно (даже при $n=2$ сумма
$H_{1}+H_{2}$ может не быть замкнутой). Поэтому естественной есть задача про нахождение
необходимых и достаточных условий (или только достаточных), при которых $H_{1}+\ldots+H_{n}$ есть подпространством $H$
(см. например \cite{Kober,Davis,newcriterion,Spitkovsky94,
BjoMan,IBAP,Lang,SchSmTs,Friedrichs,Grobler,Svensson,HalmosProbl}).

Системы подпространств с замкнутой суммой имеют многочисленные приложения:
к построению статистических оценок (см. например \cite{probability1}),
к задачам квадратичного программирования (\cite{SchSmTs}), томографии (\cite{Lang}, \cite{Svensson}),
паралельным вычислениям (\cite{BjoMan}),
алгоритмам для решения выпуклых задач существования (см. \cite{BauBor-main} и библиографию там),
изучению сходимости произведений ортопроекторов (итерационного процесса) (см. \cite{Browder};\cite{Hildebrandt}
и библиографию там),
методам Шварца (численные методы, которые применяются, например, для численного
решения дифференциальных уравнений в частных производных (методы разбиения области))
(см.~\cite{Holst} и библиографию там),
в подразделе~\ref{SS:IBAP} показана связь задачи про замкнутость суммы подпространств
со свойством обратного наилучшего приближения системы подпространств (inverse best approximation property), которое имеет
приложения
к периодическим проекционным алгоритмам, гармоническому анализу, интегральным уравнениям,
вейвлетам (см.~\cite{IBAP}).

В разделах~\ref{S:pairsubspaces}-\ref{S:Imagesofoperators}
приведены необходимые и достаточные условия
для того, чтобы сумма подпространств $H_{1},\ldots,H_{n}$
гильбертова пространства $H$ была подпространством, а также различные
свойства $n$-ок подпространств с замкнутой суммой.
Особое внимание уделено линейно независимым системам подпространств
(см. раздел~\ref{S:linind}).

В разделе
\ref{S:pairsubspaces} изучается задача о замкнутости суммы пары подпространств.
Основным рабочим инструментом является спектральная теорема для пары ортопроекторов
(<<представление>> П. Халмоша для пары подпространств).

В разделе
\ref{S:nsubspacesto2} показано, как задача про замкнутость суммы $n$ подпространств
сводится к задаче про замкнутость суммы пары подпространств. Используя критерии замкнутости
суммы пары подпространств, мы получаем критерии замкнутости суммы
$n$ подпространств.

В разделе
\ref{S:linind} изучаются линейно независимые системы подпространств.
В частности, доказано, что произвольная система
$n$ подпространств может быть <<уменьшена>> до линейно независимой системы подпространств
с сохранением суммы подпространств (см. теоремы \ref{T:sumclotolinind},~\ref{T:umenshenie1}).

В разделе
\ref{S:Imagesofoperators} изучается более общий объект чем система подпространств "---
система образов линейных непрерывных операторов. Такой подход позволяет получить
новые критерии замкнутости суммы $n$ подпространств, изучить некоторые свойства сумм
$n$-ок подпространств $H$.

Автор искренне признателен своему научному руководителю Ю.С.Самойленко,
а также В.И.Рабановичу, А.В.Стрельцу за полезные советы и обсуждения
результатов, приведённых в работе. Автор выражает благодарность
С.Ф.Коляде за помощь при работе с литературой.

\textbf{Обозначения.}
В данной работе мы рассматриваем комплексные гильбертовы пространства,
которые, как правило, мы обозначаем буквами $H,M,K$.
Отметим, что мы не накладываем дополнительных условий на размерность гильбертова
пространства. Для гильбертова пространства
$H$ $I_{H}$ "--- единичный оператор в $H$ (или просто
$I$, если понятно, о каком гильбертовом пространстве идет речь); если
$A:H\to H$ "--- линейный непрерывный оператор, то
$\sigma(A)$ обозначает спектр оператора $A$.
Некоторые ортогональные
разложения гильбертовых пространств, рассмотреные в работе, могут содержать
нулевые компоненты. Спектр оператора, определённого на такой компоненте, считаем
равным пустому множеству.

\section{Замкнутость суммы пары подпространств}\label{S:pairsubspaces}

Задаче о замкнутости суммы пары подпространств посвящены многочисленные
публикации (см. например \cite{newcriterion,Davis,Friedrichs,Grobler,Kober,SchSmTs,Spitkovsky94}),
некоторые критерии замкнутости суммы пары подпространств
уже стали математическим фольклором.

Для получения критериев замкнутости суммы пары подпространств
мы будем использовать спектральную теорему для пары ортопроекторов (<<представление>> П.Халмоша
для пары подпространств).
Многочисленные применения спектральной теоремы для пары ортопроекторов содержатся
в \cite{Spitcovsky,Halmos}. Критерии замкнутости суммы пары подпространств, сформулированые
в терминах обобщённых обратных операторов Мура-Пенроуза, содержатся в \cite{newcriterion}.

Отметим, что задача про замкнутость суммы пары подпространств в некотором смысле является
базовой "--- в разделе \ref{S:nsubspacesto2} мы покажем, как задача про замкнутость суммы $n$ подпространств сводится к
задаче про замкнутость суммы пары подпространств.

\subsection{Спектральная теорема для пары ортопроекторов}\label{SS:spectheorem}

Пусть $H$ "--- комплексное гильбертово пространство, $H_{1}$ и $H_{2}$ "--- подпространства $H$.
Будем говорить, что они находятся в общем положении (по Халмошу) (см. \cite{Halmos}),
если
\begin{equation*}
H_{1}\cap H_{2}=H_{1}\cap H_{2}^{\bot}
=H_{1}^{\bot}\cap H_{2}=H_{1}^{\bot}\cap H_{2}^{\bot}=0.
\end{equation*}

\begin{theorem}\label{T:Halmos1}\cite{Halmos}
Пусть $H_{1}$ и $H_{2}$ "--- подпространства $H$, которые
находятся в общем положении, $P_{1},P_{2}$ "--- соответствующие ортопроекторы.
Тогда найдутся гильбертово пространство $K$ и ограниченный самосопряженный оператор
$a:K\to K$, $0\leq a=a^{*}\leq I_{K}$, $\ker(a)=\ker(I_{K}-a)=0$ такие, что
$H=K\oplus K$,
а блочные разложения $P_{1}$ и $P_{2}$ имеют вид
\begin{equation*}
P_{1}=\begin{pmatrix}
   I_{K}&0 \\
   0&0
   \end{pmatrix},
P_{2}=\begin{pmatrix}
   a & \sqrt{a(I_{K}-a)}\\
   \sqrt{a(I_{K}-a)}& I_{K}-a
   \end{pmatrix}.
\end{equation*}

Наоборот, каждому такому оператору $a$ соответствуют определенные выше ортопроекторы $P_{1}$ и $P_{2}$
на подпространства $H_{1}$ и $H_{2}$
гильбертова пространства $H=K\oplus K$, которые
находятся в общем положении.
\end{theorem}

Пусть теперь $H_{1}$ и $H_{2}$ "--- произвольные подпространства $H$. Тогда можем записать:
\begin{equation}\label{E:rozkladH1}
H=(H_{1}\cap H_{2})\oplus (H_{1}\cap H_{2}^{\bot})\oplus (H_{1}^{\bot}\cap H_{2})
\oplus (H_{1}^{\bot}\cap H_{2}^{\bot})\oplus\wt{H},
\end{equation}
\begin{equation}\label{E:rozkladH11}
H_{1}=(H_{1}\cap H_{2})\oplus (H_{1}\cap H_{2}^{\bot})\oplus 0\oplus 0\oplus\wt{H}_{1},
\end{equation}
\begin{equation}\label{E:rozkladH21}
H_{2}=(H_{1}\cap H_{2})\oplus 0\oplus (H_{1}^{\bot}\cap H_{2})\oplus 0\oplus\wt{H}_{2},
\end{equation}
где $\wt{H}_{1},\wt{H}_{2}$ "--- подпространства
$\wt{H}$, которые находятся в общем положении. Непосредственно из теоремы~\ref{T:Halmos1}
получаем следующую теорему.

\begin{theorem}\label{T:spectraltheorem}(спектральная теорема для пары ортопроекторов)
Пусть $H_{1}$ и $H_{2}$ "--- подпространства $H$, $P_{1},P_{2}$ "--- соответствующие
ортопроекторы.
Тогда найдется гильбертово пространство $K$
и ограниченный самосопряженный оператор $a:K\to K$, $0\leq a=a^{*}\leq I_{K}$, $\ker(a)=\ker(I_{K}-a)=0$
такие, что
\begin{equation}\label{E:rozkladH2}
H=(H_{1}\cap H_{2})\oplus (H_{1}\cap H_{2}^{\bot})
\oplus (H_{1}^{\bot}\cap H_{2})\oplus (H_{1}^{\bot}\cap H_{2}^{\bot})
\oplus \left(K \oplus K\right),
\end{equation}
а блочные разложения $P_{1}$ и $P_{2}$ имеют вид
\begin{equation}\label{E:rozkladP1}
P_{1}=I_{H_{1}\cap H_{2}}\oplus I_{H_{1}\cap H_{2}^{\bot}}\oplus 0_{H_{1}^{\bot}\cap H_{2}}\oplus 0_{H_{1}^{\bot}\cap H_{2}^{\bot}}\oplus
\begin{pmatrix}
   I_{K}&0 \\
   0&0
   \end{pmatrix},
\end{equation}
\begin{equation}\label{E:rozkladP2}
P_{2}=I_{H_{1}\cap H_{2}}\oplus 0_{H_{1}\cap H_{2}^{\bot}}\oplus I_{H_{1}^{\bot}\cap H_{2}}\oplus 0_{H_{1}^{\bot}\cap H_{2}^{\bot}}\oplus
\begin{pmatrix}
   a & \sqrt{a(I_{K}-a)}\\
   \sqrt{a(I_{K}-a)}& I_{K}-a
   \end{pmatrix}.
\end{equation}

Используя спектральное представление
самосопряженного оператора $a$
в виде спектрального интеграла по разложению
единицы
$E_{a}(\cdot)$ в $K$ на $(0,1)$, имеем:
\begin{align*}
&P_{1}=I_{H_{1}\cap H_{2}}\oplus I_{H_{1}\cap H_{2}^{\bot}}\oplus 0_{H_{1}^{\bot}\cap H_{2}}\oplus 0_{H_{1}^{\bot}\cap H_{2}^{\bot}}\oplus
\begin{pmatrix}
1&0\\
0&0
\end{pmatrix}
\otimes I_{K},\\
&P_{2}=I_{H_{1}\cap H_{2}}\oplus 0_{H_{1}\cap H_{2}^{\bot}}\oplus I_{H_{1}^{\bot}\cap H_{2}}\oplus 0_{H_{1}^{\bot}\cap H_{2}^{\bot}}\oplus
\int_{(0,1)}\begin{pmatrix}
x&\sqrt{x(1-x)}\\
\sqrt{x(1-x)}&1-x
\end{pmatrix}
\otimes dE_{a}(x),
\end{align*}
где интеграл сходится равномерно.

Это представление для пары подпространств
позволяет провести аналогию с двумерным случаем
$(\dim K=1).$ Действительно, существует и единственный
самосопряженный оператор
$\Theta:K\to K$, $0\ls \Theta\ls\frac{\pi}{2}I_{K}$,
$\ker(\Theta)=\ker(\frac{\pi}{2}I_{K}-\Theta)=0$
(угловой оператор пары подпространств) такой, что
$a=\cos^{2}\Theta$,
$I_{K}-a=\sin^{2}\Theta$.
Тогда ортопроекторы имеют вид:
\begin{align*}
&P_{1}=I_{H_{1}\cap H_{2}}\oplus I_{H_{1}\cap H_{2}^{\bot}}\oplus 0_{H_{1}^{\bot}\cap H_{2}}\oplus 0_{H_{1}^{\bot}\cap H_{2}^{\bot}}\oplus
\begin{pmatrix}
I_{K}&0\\
0&0
\end{pmatrix},\\
&P_{2}=I_{H_{1}\cap H_{2}}\oplus 0_{H_{1}\cap H_{2}^{\bot}}\oplus I_{H_{1}^{\bot}\cap H_{2}}\oplus 0_{H_{1}^{\bot}\cap H_{2}^{\bot}}\oplus
\begin{pmatrix}
\cos^{2}\Theta&\cos\Theta\sin\Theta\\
\cos\Theta\sin\Theta&\sin^{2}\Theta
\end{pmatrix}.
\end{align*}
\end{theorem}

Далее в разделе \ref{S:pairsubspaces} мы будем использовать обозначения
теоремы \ref{T:spectraltheorem}, а
также (\ref{E:rozkladH1}),(\ref{E:rozkladH11}),(\ref{E:rozkladH21}).

\subsection{Классические критерии замкнутости суммы пары подпространств как следствия спектральной
теоремы для пары ортопроекторов}\label{SS:classictwosubspaces}

В этом подразделе собраны критерии замкнутости суммы пары подпространств, которые
были получены многими авторами и относятся к математическому фольклору.
Наша цель "---
показать, что применяя спектральную теорему для пары ортопроекторов,
эти критерии доказываются несложно и единообразно.

\begin{statement}\label{ST:pair1}
Следующие условия равносильны:
\begin{enumerate}
\item
$H_{1}+H_{2}$ замкнуто,
\item
$1\notin\sigma(a)$,
\item
$\sigma(P_{1}P_{2})\cap(1-\ve,1)=\varnothing$ для некоторого $\varepsilon>0$,
\item
$\|P_{1}P_{2}-P_{H_{1}\cap H_{2}}\|<1$,
\item
$H_{1}^{\bot}+H_{2}^{\bot}$ замкнуто,
\item
$\tmd{Im}((I-P_{1})P_{2})$ замкнуто,
\item
$\tmd{Im}(I-P_{1}P_{2})$ замкнуто.
\end{enumerate}
\end{statement}
\begin{proof}
\textbf{1.} Сумма
$H_{1}+H_{2}$ замкнута тогда и только тогда, когда
$\wt{H}_{1}+\wt{H}_{2}$ замкнуто в
$\wt{H}=K\oplus K$. Поскольку
$\wt{H}_{1}=\{(x,0),\,x\in K\}$,
$\wt{H}_{2}=\{(\sqrt{a}x,\sqrt{I-a}x),\,x\in K\}$, то
$\wt{H}_{1}+\wt{H}_{2}=\{(x,\sqrt{I-a}y),\,x,y\in K\}$.
Так как
$\ker(I-a)=0$, то
$\wt{H}_{1}+\wt{H}_{2}$ плотно в
$K\oplus K$, и замкнуто тогда и только тогда, когда
$\tmd{Im}\sqrt{I-a}=K$. Это условие равносильно обратимости
$I-a$. Таким образом,
$H_{1}+H_{2}$ замкнуто тогда и только тогда, когда
$1\notin\sigma(a)$.

\textbf{2.}
Поскольку
$P_{1}P_{2}=I\oplus 0\oplus 0\oplus 0\oplus
\begin{pmatrix}
a&\sqrt{a(I-a)}\\
0&0
\end{pmatrix}$, то
$\sigma(P_{1}P_{2})$ совпадает с
$\sigma(a)$ с точностью до точек
$0,1$. Предположим, что
$1\notin\sigma(a)$. Тогда для некоторого
$\ve>0$
$\sigma(a)\subset[0,1-\ve]$, а поэтому
$\sigma(P_{1}P_{2})\cap(1-\ve,1)=\varnothing$. Наоборот, пусть
$\sigma(P_{1}P_{2})\cap(1-\ve,1)=\varnothing$ для некоторого $\varepsilon>0$. Тогда
$\sigma(a)\cap(1-\ve,1)=\varnothing$. Поскольку
$\ker(I-a)=0$ и изолированная точка спектра самосопряженного оператора является его собственным значением, то
$1\notin\sigma(a)$.

\textbf{3.}
Ясно, что
$P_{1}P_{2}P_{1}-P_{H_{1}\cap H_{2}}=0\oplus 0\oplus 0\oplus 0\oplus
\begin{pmatrix}
a&0\\
0&0
\end{pmatrix}$. Используя равенство $\|A\|^{2}=\|AA^{*}\|$, получим
\begin{equation*}
\|P_{1}P_{2}-P_{H_{1}\cap H_{2}}\|^{2}=\|P_{1}P_{2}P_{1}-P_{H_{1}\cap H_{2}}\|=\|a\|.
\end{equation*}
Поэтому
$1\notin\sigma(a)$ тогда и только тогда, когда
$\|P_{1}P_{2}-P_{H_{1}\cap H_{2}}\|<1$.

\textbf{4.}
Из формул
\eqref{E:rozkladP1}, \eqref{E:rozkladP2} следует, что
$\sigma((I-P_{1})(I-P_{2}))$ совпадает с
$\sigma(a)$ с точностью до точек $0,1$. Поэтому
$\sigma((I-P_{1})(I-P_{2}))$ совпадает с
$\sigma(P_{1}P_{2})$ с точностью до точек
$0,1$. Отсюда следует нужное утверждение.

\textbf{5.} Из формул \eqref{E:rozkladP1}, \eqref{E:rozkladP2} следует, что
\begin{equation*}
\tmd{Im}((I-P_{1})P_{2})=0\oplus 0\oplus H_{1}^{\bot}\cap H_{2}\oplus 0\oplus (0\oplus\tmd{Im}\sqrt{I-a}).
\end{equation*}
Поэтому $\tmd{Im}((I-P_{1})P_{2})$
замкнуто тогда и только тогда, когда
$\tmd{Im}\sqrt{I-a}$ замкнуто. Последнее равносильно
$1\notin\sigma(a)$.

\textbf{6.}
Обозначим
$\wt{P}_{1},\wt{P}_{2}$ ортопроекторы на
$\wt{H}_{1},\wt{H}_{2}$.
$\tmd{Im}(I-P_{1}P_{2})$ замкнут тогда и только тогда, когда
$\tmd{Im}(I-\wt{P}_{1}\wt{P}_{2})$ замкнут.
Поскольку
\begin{equation*}
\tmd{Im}(I-\wt{P}_{1}\wt{P}_{2})=\{((I-a)y-\sqrt{a(I-a)}z,z),\,y,z\in K\},
\end{equation*}
последнее условие равносильно замкнутости
$\tmd{Im}(I-a)$, т.е.
$1\notin\sigma(a)$.
\end{proof}

\begin{example}
Пусть $0=\tau_{0}<\tau_{1}<\ldots<\tau_{n}<\tau_{n+1}=1$,
а $P_{1},P_{2}$ удовлетворяют равенству
$\prod_{k=0}^{n+1}(P_{1}P_{2}P_{1}-\tau_{k}P_{1})=0$

Поскольку блочные разложения $P_{1},P_{2}$ имеют вид \eqref{E:rozkladP1}, \eqref{E:rozkladP2}, то
\begin{equation*}
P_{1}P_{2}P_{1}-\tau P_{1}=
(1-\tau)I\oplus -\tau I\oplus 0\oplus 0\oplus
\begin{pmatrix}
a-\tau I&0\\
0&0
\end{pmatrix}.
\end{equation*}
Поэтому
$\prod_{k=0}^{n+1}(a-\tau_{k}I)=0$,
т.е. $\sigma(a)\subset\{\tau_{1},\ldots,\tau_{n}\}$
(напомним, что $\ker(a)=\ker(I-a)=0$).
Поэтому $H_{1}+H_{2}$ "--- подпространство.
\end{example}

Дадим условие замкнутости суммы пары подпространств в
терминах угла (по Фридрихсу) между ними.

\begin{definition}(см.~\cite{Friedrichs})
Углом
$\gamma=\gamma(H_{1},H_{2})$, $0\ls\gamma\ls \pi/2$, между подпространствами
$H_{1},H_{2}$ назовём угол, определённый равенством
\begin{equation*}
\cos\gamma=\sup\{|(x,y)|,\,x\in H_{1}\ominus(H_{1}\cap H_{2}),\,\|x\|=1,\,
y\in H_{2}\ominus(H_{1}\cap H_{2}),\,
\|y\|=1\}.
\end{equation*}
Если
$H_{1}\subset H_{2}$ или
$H_{2}\subset H_{1}$, то полагаем
$\cos\gamma=0$, т.е. $\gamma=\pi/2$.
\end{definition}

Пусть
$x=(0,x_{2},0,0,z_{1},0)\in H_{1}\ominus(H_{1}\cap H_{2})$,
тогда
$\|x\|^{2}=\|x_{2}\|^{2}+\|z_{1}\|^{2}=1$. Далее, пусть
$y=(0,0,x_{3},0,\sqrt{a}z_{2},\sqrt{1-a}z_{2})\in H_{2}\ominus(H_{1}\cap H_{2})$,
тогда
$\|y\|^{2}=\|x_{3}\|^{2}+\|z_{2}\|^{2}=1$.
Тут векторы записаны покомпонентно относительно ортогонального разложения $H$
\eqref{E:rozkladH2}.
Тогда
$(x,y)=(z_{1},\sqrt{a}z_{2})$.
Отсюда легко получаем
$\cos\gamma=\sqrt{\|a\|}$. Выше мы доказали, что
$H_{1}+H_{2}$ замкнуто тогда и только тогда, когда
$\|a\|<1$. Таким образом, мы доказали следующее утверждение.

\begin{statement}\label{ST:pair2}
$H_{1}+H_{2}$ замкнуто тогда и только тогда, когда
$\gamma(H_{1},H_{2})>0$.
\end{statement}

\begin{remark}
В терминах углового оператора
$\Theta$ (см. теорему \ref{T:spectraltheorem}) угол
$\gamma=\min\{\lambda,\,\lambda\in \sigma(\Theta)\}$.
\end{remark}

Следующее утверждение касается линейно независимых пар подпространств, т.е. таких, что
$H_{1}\cap H_{2}=0$.

\begin{statement}\label{ST:pair3}
Следующие утверждения эквивалентны:
\begin{enumerate}
\item
$H_{1}\cap H_{2}=0$ и $H_{1}+H_{2}$ замкнуто,
\item
$\|P_{1}P_{2}\|<1$,
\item
существует
$\ve>0$, такое, что $|(x,y)|\leqslant 1-\ve$ для произвольных
$x\in H_{1},\,y\in H_{2},\,\|x\|=\|y\|=1$,
\item
существует
$\ve>0$, такое, что  $\|x+y\|^{2}\gs \ve(\|x\|^{2}+\|y\|^{2})$ для произвольных
$x\in H_{1},y\in H_{2}$,
\item
существует
$\ve>0$, такие, что $\|(I-P_{1})x\|\gs\ve\|x\|$ для произвольного
$x\in H_{2}$.
\end{enumerate}
\end{statement}
\begin{proof}
Равносильность
$(1),(2),(3)$ следует из утверждений
\ref{ST:pair1},~\ref{ST:pair2}.

$(3)\Leftrightarrow(4)$.

Пусть выполнено
$(3)$. Тогда для произвольных
$x\in H_{1},y\in H_{2}$ имеем:
$\|x+y\|^{2}=\|x\|^{2}+\|y\|^{2}+2\tmd{Re}(x,y)\gs\ve(\|x\|^{2}+\|y\|^{2})$.

Пусть выполнено
$(4)$. Рассмотрим произвольные
$x\in H_{1},y\in H_{2}$,
$\|x\|=\|y\|=1$. Для произвольных комплексных
$t_{1},t_{2}$ имеем:
$\|t_{1}x+t_{2}y\|^{2}\gs\ve(|t_{1}|^{2}+|t_{2}|^{2})$, т.е. эрмитова матрица
$\begin{pmatrix}
1-\ve &(x,y)\\
(y,x)&1-\ve
\end{pmatrix}$
неотрицательно определена. Поэтому
$|(x,y)|\ls 1-\ve$.

$(1)\Leftrightarrow(5)$.

Пусть выполнено
$(1)$. Тогда в ортогональном разложении
$H$ \eqref{E:rozkladH2} компонента
$H_{1}\cap H_{2}$ отсутствует. Пусть
$x=(0,y,0,\sqrt{a}z,\sqrt{I-a}z)\in H_{2}$, тогда
$\|x\|^{2}=\|y\|^{2}+\|z\|^{2}$. Поскольку
$(I-P_{1})x=(0,y,0,0,\sqrt{I-a}z)$, то
$\|(I-P_{1})x\|^{2}=\|y\|^{2}+((I-a)z,z)$. Так как
$1\notin\sigma(a)$, то для некоторого
$\ve>0$
$I-a\gs\ve^{2}I$, а тогда
$\|(I-P_{1})x\|\gs\ve\|x\|$.

Пусть выполнено
$(5)$. Очевидно,
$H_{1}\cap H_{2}=0$. Из предыдущих рассуждений
$I-a\gs\ve^{2}I$, откуда
$1\notin\sigma(a)$.
\end{proof}

Следующее утверждение доказано в \cite{Lang} для подпространств пространства Фреше.
Мы приведём простое доказательство с помощью спектральной теоремы для пары ортопроекторов.
Обозначим $S_{\infty}(H)$ множество компактных операторов в
$H$.

\begin{statement}\label{ST:compactproduct}
Если $P_{1}P_{2}$ компактный, то
$H_{1}+H_{2}$ замкнуто и
$P_{H_{1}+H_{2}}=P_{1}+P_{2}\pmod{S_{\infty}(H)}$.
\end{statement}
\begin{proof}
Поскольку
$P_{1}P_{2}$ компактный, то
$\dim (H_{1}\bigcap H_{2})<\infty$ и
$a\in S_{\infty}(H)$. Поскольку
$\ker(I-a)=0$, то
$1\notin\sigma(a)$, а поэтому сумма
$H_{1}+H_{2}$ замкнута. Имеем:
\begin{align*}
&P_{1}+P_{2}=2I_{H_{1}\cap H_{2}}\oplus I_{H_{1}\cap H_{2}^{\bot}}\oplus I_{H_{1}^{\bot}\cap H_{2}}\oplus 0_{H_{1}^{\bot}\cap H_{2}^{\bot}}\oplus
\begin{pmatrix}
   I_{K}+a&\sqrt{a(I_{K}-a)} \\
   \sqrt{a(I_{K}-a)}&I_{K}-a
   \end{pmatrix},\\
&P_{H_{1}+H_{2}}=I_{H_{1}\cap H_{2}}\oplus I_{H_{1}\cap H_{2}^{\bot}}\oplus I_{H_{1}^{\bot}\cap H_{2}}\oplus 0_{H_{1}^{\bot}\cap H_{2}^{\bot}}\oplus
\begin{pmatrix}
   I_{K}&0 \\
   0&I_{K}
   \end{pmatrix}.
\end{align*}
Поэтому
$P_{H_{1}+H_{2}}=P_{1}+P_{2}\pmod{S_{\infty}(H)}$.
\end{proof}

\subsection{Условие замкнутости $H_{1}+H_{2}$ в терминах свойств функций от $P_{1},P_{2}$}
\label{SS:functionofprojection}

Пусть
$f_{1}(x),f_{2}(x),f_{3}(x),f_{4}(x)$ "--- непрерывные на
$[0,1]$ комплекснозначные функции. Определим оператор
\begin{equation}\label{E:operatorb}
b=P_{1}f_{1}(P_{1}P_{2}P_{1})+P_{2}f_{2}(P_{2}P_{1}P_{2})+P_{1}P_{2}f_{3}(P_{2}P_{1}P_{2})+
P_{2}P_{1}f_{4}(P_{1}P_{2}P_{1}).
\end{equation}

\begin{remark}
Алгебра, порожденная ортопроекторами
$P_{1},P_{2}$ имеет вид
$\mc{A}(P_{1},P_{2})=\{b\}$,
где
$f_{1},f_{2},f_{3},f_{4}$
пробегают множество полиномов.
\end{remark}

Сужения оператора $b$ на компоненты ортогонального разложения $H$
\eqref{E:rozkladH2} обозначим
$b_{1,1},b_{1,0},b_{0,1},b_{0,0},\wt{b}$.
Спектр $\sigma(b)$ является объединением спектров операторов $b_{1,1},b_{1,0},b_{0,1},b_{0,0},\wt{b}$.

На компоненте $H_{1}\cap H_{2}$
$P_{1}=P_{2}=I$, а поэтому $b_{1,1}=(f_{1}(1)+f_{2}(1)+f_{3}(1)+f_{4}(1))I$.
На компоненте $H_{1}\cap H_{2}^{\bot}$
$P_{1}=I,P_{2}=0$, а поэтому $b_{1,0}=f_{1}(0)I$.
На компоненте $H_{1}^{\bot}\bigcap H_{2}$
$P_{1}=0,P_{2}=I$, а поэтому $b_{0,1}=f_{2}(0)I$.
На компоненте $H_{1}^{\bot}\bigcap H_{2}^{\bot}$
$P_{1}=P_{2}=0$, а поэтому $b_{0,0}=0$.
Рассмотрим компоненту
$K\oplus K$. Сужения ортопроекторов $P_{1},P_{2}$ на
$K\oplus K$ равны
\begin{equation*}
\wt{P}_{1}=\begin{pmatrix}
I&0\\
0&0
\end{pmatrix},
\wt{P}_{2}=\begin{pmatrix}
a&\sqrt{a(I-a)}\\
\sqrt{a(I-a)}&I-a
\end{pmatrix},
\end{equation*}
а поэтому
\begin{equation*}
\wt{P}_{1}\wt{P}_{2}=\begin{pmatrix}
a&\sqrt{a(I-a)}\\
0&0
\end{pmatrix},
\wt{P}_{2}\wt{P}_{1}=\begin{pmatrix}
a&0\\
\sqrt{a(I-a)}&0
\end{pmatrix}
\end{equation*}
и
\begin{equation*}
\wt{P}_{1}\wt{P}_{2}\wt{P}_{1}=\begin{pmatrix}
a&0\\
0&0
\end{pmatrix}=a\wt{P}_{1},\quad
\wt{P}_{2}\wt{P}_{1}\wt{P}_{2}=\begin{pmatrix}
a^{2}&a\sqrt{a(I-a)}\\
a\sqrt{a(I-a)}&a(I-a)
\end{pmatrix}=a\wt{P}_{2}.
\end{equation*}
Тут умножение на
$a$ означает умножение на оператор
$\begin{pmatrix}
a&0\\
0&a
\end{pmatrix}$.
Для непрерывной на
$[0,1]$ комплекснозначной функции $f(x)$ имеем:
\begin{equation*}
\wt{P}_{1}f(\wt{P}_{1}\wt{P}_{2}\wt{P}_{1})=f(a)\wt{P}_{1},\quad
\wt{P}_{2}f(\wt{P}_{2}\wt{P}_{1}\wt{P}_{2})=f(a)\wt{P}_{2}.
\end{equation*}
Поэтому
\begin{align*}
\wt{b}&=f_{1}(a)\wt{P}_{1}+f_{2}(a)\wt{P}_{2}+f_{3}(a)\wt{P}_{1}\wt{P}_{2}+f_{4}(a)\wt{P}_{2}\wt{P}_{1}=\\
&=\begin{pmatrix}
f_{1}(a)+af_{2}(a)+af_{3}(a)+af_{4}(a)&\sqrt{a(I-a)}(f_{2}(a)+f_{3}(a))\\
\sqrt{a(I-a)}(f_{2}(a)+f_{4}(a))&(I-a)f_{2}(a).
\end{pmatrix}
\end{align*}
Определим функции
$T(x)=f_{1}(x)+f_{2}(x)+x(f_{3}(x)+f_{4}(x))$ и
$D(x)=(1-x)(f_{1}(x)f_{2}(x)-xf_{3}(x)f_{4}(x))$, $x\in[0,1]$.
У операторной матрицы $2\times 2$, задающей
$\wt{b}$, компоненты коммутируют, её операторный след (сумма диагональных элементов)
равен $T(a)$, а операторный определитель $D(a)$.
Оператор
$\lambda I-\wt{b}$ не обратим тогда и только тогда, когда операторный определитель
операторной матрицы $2\times 2$, задающей
$\lambda I-\wt{b}$, не обратим (см.
\cite{HalmosProbl}, задача 55). Этот операторный определитель равен
$\lambda^{2}I-\lambda T(a)+D(a)$.
Поскольку функции
$T(x),D(x)$ непрерывны, по теореме об отображение спектра
$\sigma(\lambda^{2}I-\lambda T(a)+D(a))=\{\lambda^{2}-T(x)\lambda+D(x),\,x\in\sigma(a)\}.$
Таким образом,
$\sigma(\wt{b})$ "--- множество решений уравнения
$\lambda^{2}-T(x)\lambda+D(x)=0$, когда $x$ пробегает $\sigma(a)$.

Сумма
$H_{1}+H_{2}$ замкнута тогда и только тогда, когда
$1\notin\sigma(a)$. Теперь мы готовы сформулировать
критерий замкнутости $H_{1}+H_{2}$
в терминах спектра
$b$ (напомним, что $b$ задан формулой (\ref{E:operatorb})).
Определим функцию $F(x)=f_{1}(x)f_{2}(x)-xf_{3}(x)f_{4}(x),\,x\in[0,1]$.

\begin{statement}\label{ST:functionpair1}
Пусть функция
$F(x)$ не обращается в 0 на
$[0,1)$. Тогда:
\begin{enumerate}
\item
сумма
$H_{1}+H_{2}$ замкнута тогда и только тогда, когда существует
$\ve>0$, такое, что
$\sigma(b)\cap(\{z\in\mathbb{C},\,|z|<\ve\}\setminus\{0\})=\varnothing$;
\item
пусть дополнительно
$f_{1}(1)+f_{2}(1)+f_{3}(1)+f_{4}(1)\neq 0$; сумма
$H_{1}+H_{2}=H$ тогда и только тогда, когда оператор
$b$ обратим.
\end{enumerate}
\end{statement}
\begin{proof}
Докажем (1).

Предположим,
$1\notin\sigma(a)$. Тогда существует
$m_{1}>0$, для которого $|D(x)|\gs m_{1},\,x\in\sigma(a)$.
Существует $m_{2}$, такое, что для произвольного $x\in\sigma(a)$ $|T(x)|\ls m_{2}$.
Отсюда легко следует существование искомого
$\ve>0$.

Предположим,
$1\in\sigma(a)$. Поскольку изолированная точка спектра самосопряженного
оператора есть его собственным значением и
$\ker(I-a)=0$, то существует последовательность
$x_{j}\in\sigma(a),\,j\gs 1$, сходящаяся к 1, причём для всех
$j\gs 1$ $x_{j}<1$.
Теперь из теоремы о непрерывной зависимости корней полинома от его коеффициентов
следует существование последовательности
$\lambda_{j}\in\sigma(\wt{b})\subset\sigma(b)$, сходящейся к 0, причём для всех
$j\gs 1$ $\lambda_{j}\neq 0$.

Докажем (2).

Пусть $H_{1}+H_{2}=H$. Тогда
$H_{1}^{\bot}\cap H_{2}^{\bot}=0$. Спектры
$\sigma(b_{1,1})=f_{1}(1)+f_{2}(1)+f_{3}(1)+f_{4}(1)\neq 0$,
$\sigma(b_{1,0})=f_{1}(0)\neq 0$,
$\sigma(b_{0,1})=f_{2}(0)\neq 0$
(тут равенства написаны при условии, что соответствующий спектр непустой).
Поскольку
$1\notin\sigma(a)$, то
$0\notin\sigma(\wt{b})$. Поэтому
$b$ обратим.

Пусть
$b$ обратим.
Тогда $\tmd{Im}(b)=H$. Из определения
$b$ следует, что $\tmd{Im}(b)\subset H_{1}+H_{2}$. Поэтому
$H_{1}+H_{2}=H$.
\end{proof}

\begin{example}
Пусть
$f_{1}(x)=f_{2}(x)=1,f_{3}(x)=f_{4}(x)=0$.
Тогда оператор
$b=P_{1}+P_{2}$ неотрицателен. Сумма
$H_{1}+H_{2}$ замкнута тогда и только тогда, когда существует
$\ve>0$, такое, что
$\sigma(P_{1}+P_{2})\cap(0,\ve)=\varnothing$. Сумма
$H_{1}+H_{2}=H$ тогда и только тогда, когда существует
$\ve>0$, такое, что $P_{1}+P_{2}\gs\ve I$.
\end{example}

\begin{example}
Пусть
$f_{1}=\tau_{1},f_{2}=\tau_{2},f_{3}=f_{4}=0$, где
$\tau_{1},\tau_{2}$ "--- действительные числа, отличные от 0.
Используя выражение для спектра
$\tau_{1}P_{1}+\tau_{2}P_{2}$, условие
замкнутости $H_{1}+H_{2}$ можно сформулировать точнее, чем в утверждении \ref{ST:functionpair1}:
сумма $H_{1}+H_{2}$ "--- подпространство тогда и только тогда, когда
существует $\ve>0$ такое, что:
\begin{enumerate}
\item если
$\tau_{1}>0,\tau_{2}>0$ то $\sigma(\tau_{1}P_{1}+\tau_{2}P_{2})\cap(0,\ve)=\varnothing$
\item если
$\tau_{1}<0,\tau_{2}<0$ то $\sigma(\tau_{1}P_{1}+\tau_{2}P_{2})\cap(-\ve,0)=\varnothing$
\item если
$\tau_{1},\tau_{2}$ имеют разный знак и $\tau_{1}+\tau_{2}\gs 0$ то
$\sigma(\tau_{1}P_{1}+\tau_{2}P_{2})\cap(-\ve,0)=\varnothing$
\item если
$\tau_{1},\tau_{2}$ имеют разный знак и $\tau_{1}+\tau_{2}\ls 0$ то
$\sigma(\tau_{1}P_{1}+\tau_{2}P_{2})\cap(0,\ve)=\varnothing$.
\end{enumerate}
\end{example}

\begin{statement}\label{ST:functionpair2}
Пусть функция
$F(x)$ не обращается в 0 на $[0,1)$.
$H_{1}+H_{2}$ замкнуто тогда и только тогда, когда
$\tmd{Im}(b)$ замкнут.
\end{statement}
\begin{proof}
$\tmd{Im}(b)$ замкнут тогда и только тогда, когда
$\tmd{Im}(\wt{b})$ замкнут. Из теоремы Дугласа (см. \cite{Douglas},
а также раздел \ref{SS:Douglas} данной работы)
следует, что
$\tmd{Im}(\wt{b})=\tmd{Im}(\wt{b}(\wt{b})^{*})^{1/2}$.
Поэтому
$\tmd{Im}(b)$ является подпространством тогда и только тогда, когда для некоторого
$\ve>0$
$\sigma(\wt{b}(\wt{b})^{*})\cap(0,\ve)=\varnothing$.

\textbf{1.}
Пусть $H_{1}+H_{2}$ "--- подпространство, т.е.
$1\notin\sigma(a)$. Тогда оператор
$\wt{b}$ обратим, а поэтому
$\tmd{Im}(\wt{b})=\wt{H}$.

\textbf{2.}
Пусть теперь
$H_{1}+H_{2}$ не является подпространством, т.е.
$1\in\sigma(a)$. Существует последовательность
$x_{k}\in\sigma(a),\,x_{k}\to 1$, причём
$x_{k}<1,\,k\gs 1$. Операторный определитель блочной $2\times 2$
матрицы, задающей
$\wt{b}(\wt{b})^{*}$ равен
$D_{1}(a)$, где
$D_{1}(x)=(1-x)^{2}|F(x)|^{2}$, а её операторный след равен
$T_{1}(a)$ для некоторой (её явный вид нам не нужен) непрерывной на $[0,1]$ функции $T_{1}(x)$.
Спектр оператора $\wt{b}(\wt{b})^{*}$ есть множество решений
$\lambda$ уравнения $\lambda^{2}-T_{1}(x)\lambda+D_{1}(x)=0$, когда
$x$ пробегает $\sigma(a)$.
Теперь из теоремы про непрерывную зависимость корней полинома от его коеффициентов
следует существование последовательности
$\lambda_{j}\in\sigma(\wt{b}(\wt{b})^{*})$, сходящейся к 0, причём для всех
$j\gs 1$ $\lambda_{j}\neq 0$.
Поэтому
$\tmd{Im}(\wt{b})$ "--- не подпространство.
\end{proof}

\begin{statement}\label{ST:functionpair3}
Предположим, что $\tmd{Im}(\wt{b})=\wt{H}_{1}+\wt{H}_{2}$.
Тогда $H_{1}+H_{2}$ "--- подпространство.
\end{statement}
\begin{proof}
Пусть
$\tmd{Im}(\wt{b})=\wt{H}_{1}+\wt{H}_{2}$. Из теоремы Дугласа (см.\cite{Douglas}, а также раздел
\ref{SS:Douglas} данной работы) следует, что существует
$\ve>0$, такое, что
$\wt{b}(\wt{b})^{*}\gs\ve(\wt{P}_{1}+\wt{P}_{2})$.
Для
$x\in[0,1]$ определим следующие $(2\times 2)$-матрицы:
\begin{equation*}
\wt{b}(x)=
\begin{pmatrix}
f_{1}(x)+xf_{2}(x)+xf_{3}(x)+xf_{4}(x)&\sqrt{x(1-x)}(f_{2}(x)+f_{3}(x))\\
\sqrt{x(1-x)}(f_{2}(x)+f_{4}(x))&(1-x)f_{2}(x).
\end{pmatrix}
\end{equation*}
и
\begin{equation*}
\wt{P}_{1}(x)=\begin{pmatrix}
   1&0 \\
   0&0
   \end{pmatrix},
\wt{P}_{2}(x)=\begin{pmatrix}
   x & \sqrt{x(1-x)}\\
   \sqrt{x(1-x)}& 1-x
   \end{pmatrix}.
\end{equation*}
Тогда для каждого
$x\in\sigma(a)$
$\wt{b}(x)(\wt{b}(x))^{*}\gs\ve(\wt{P}_{1}(x)+\wt{P}_{2}(x))$, а поэтому
$\det(\wt{b}(x)(\wt{b}(x))^{*})\gs\ve^{2}\det(\wt{P}_{1}(x)+\wt{P}_{2}(x))$.
Поэтому
$(1-x)^{2}|F(x)|^{2}\gs\ve^{2}(1-x)$, т.е.
$(1-x)|F(x)|^{2}\gs\ve^{2}$
для всякого
$x\in\sigma(a)\setminus\{1\}$.
Поэтому
$1\notin\sigma(a)$, что означает замкнутость
$H_{1}+H_{2}$.
\end{proof}

\begin{conclusion}
Если
$\tmd{Im}(b)=H_{1}+H_{2}$, то
$H_{1}+H_{2}$ "--- подпространство.
\end{conclusion}

\begin{conclusion}
Пусть
$F(x)$ не обращается в 0 на $[0,1)$ и
$H_{1}\cap H_{2}=0$. Сумма
$H_{1}+H_{2}$ "--- подпространство тогда и только тогда, когда $\tmd{Im}(b)=H_{1}+H_{2}$.
\end{conclusion}

\begin{conclusion}
Пусть
$F(x)$ не обращается в 0 на $[0,1)$ и
$f_{1}(1)+f_{2}(1)+f_{3}(1)+f_{4}(1)\neq 0$. Сумма
$H_{1}+H_{2}$ "--- подпространство тогда и только тогда, когда $\tmd{Im}(b)=H_{1}+H_{2}$.
\end{conclusion}

\begin{conclusion}
Пусть
$F(x)$ не обращается в 0 на $[0,1)$. Сумма
$H_{1}+H_{2}$ "--- подпространство тогда и только тогда, когда
$\tmd{Im}(b)\supset (H_{1}+H_{2})\bigcap(H_{1}\cap H_{2})^{\bot}$.
\end{conclusion}

В дальнейшем нам понадобится свойство <<почти>> симметричности
$\sigma(b)$.
Из полученых формул для $\sigma(b)$ следует следующее
утверждение.

\begin{statement}\label{ST:pairsymmetry}
Пусть функция
$f_{1}(x)+f_{2}(x)+x(f_{3}(x)+f_{4}(x))=c$, $x\in[0,1]$.
Тогда
$\sigma(b)$ <<почти>> симметричен относительно точки $c/2$:
если
$\lambda\in\sigma(b)$ и
$\lambda\notin\{0,f_{1}(0),f_{2}(0),c\}$, то
$(c-\lambda)\in\sigma(b)$.
\end{statement}

\section{Сведение задачи про замкнутость суммы $n$ подпространств
к задаче про замкнутость суммы пары подпространств}\label{S:nsubspacesto2}

\subsection{}

Пусть
$H_{1},\ldots,H_{n}$ "--- подпространства
гильбертова пространства
$H$,
$P_{1},\ldots,P_{n}$ "--- соответствующие ортопроекторы.

Введём в рассмотрение гильбертово пространство
$X=\underbrace{H\oplus\ldots\oplus H}_{n}$.
Определим
в нём подпространства
$\Delta=\{(x,\ldots,x),\,x\in H\}$,
а также подпространство
$\wt{H}=H_{1}\oplus\ldots\oplus H_{n}$.
Соответствующие ортопроекторы имеют вид
\begin{equation*}
P_{\Delta}=\begin{pmatrix}
\frac{1}{n} I_{H}&\ldots&\frac{1}{n} I_{H}\\
\hdotsfor{3}\\
\frac{1}{n} I_{H}&\ldots&\frac{1}{n} I_{H}
\end{pmatrix},~
P_{\wt{H}}=diag(P_{1},\ldots,P_{n}).
\end{equation*}

Поскольку
\begin{equation*}
\Delta^{\bot}=\{(x_{1},\ldots,x_{n}),\,x_{k}\in H,\,1\ls k\ls n,\,\sum_{k=1}^{n}x_{k}=0\}
\end{equation*}
то
\begin{equation*}
\Delta^{\bot}+\wt{H}=\{(x_{1},\ldots,x_{n}),\,\sum_{k=1}^{n}x_{k}\in H_{1}+\ldots+H_{n}\}.
\end{equation*}
Отсюда вытекает следующее утверждение:

\begin{statement}
$\sum_{k=1}^{n}H_{k}$ замкнуто тогда и только тогда, когда
$\Delta^{\bot}+\wt{H}$ замкнуто.
$\sum_{k=1}^{n}H_{k}=H$ тогда и только тогда, когда
$\Delta^{\bot}+\wt{H}=X$.
\end{statement}

Для того, чтобы воспользоваться критерием замкнутости суммы пары подпространств,
рассмотрим оператор
\begin{equation*}
P_{\Delta}P_{\wt{H}}P_{\Delta}=\dfrac{1}{n^{2}}\begin{pmatrix}
\sum_{k=1}^{n}P_{k}&\ldots&\sum_{k=1}^{n}P_{k}\\
\hdotsfor{3}\\
\sum_{k=1}^{n}P_{k}&\ldots&\sum_{k=1}^{n}P_{k}
\end{pmatrix}.
\end{equation*}
Легко проверить, что
$\sigma(P_{\Delta}P_{\wt{H}}P_{\Delta})=0\cup\sigma\left(\dfrac{P_{1}+\ldots+P_{n}}{n}\right)$. Используя спектральную
теорему для пары ортопроекторов $P_{\Delta},P_{\wt{H}}$, несложно получить
$\sigma(P_{\Delta^{\bot}}P_{\wt{H}})=\{1-\alpha,\,\alpha\in\sigma\left(\dfrac{P_{1}+\ldots+P_{n}}{n}\right)\}$
с точностью до точек 0,1.
Используя утверждение \ref{ST:pair1}, получаем следующий критерий замкнутости суммы
$n$ подпространств.

\begin{statement}\label{ST:statement2}
$H_{1}+\ldots+H_{n}$ замкнуто тогда и только тогда, когда существует
$\ve>0$, такое, что
$\sigma(P_{1}+\ldots+P_{n})\cap(0,\ve)=\varnothing$.
\end{statement}

Из утверждения \ref{ST:statement2}, а также того, что изолированная точка спектра
самосопряженного оператора является его собственным значением, следует, что
$\sum_{k=1}^{n}H_{k}=H$ тогда и только тогда, когда
$\sum_{k=1}^{n}P_{k}$ обратим.

\begin{remark}
Утверждение \ref{ST:statement2} можно получить различными способами, например:\\
$(\textbf{a})$ используя теорему Р. Дугласа (см. раздел \ref{S:Imagesofoperators}),\\
$(\textbf{b})$
достаточно показать, что если
$H_{1}+\ldots+H_{n}=H$, то
$P_{1}+\ldots+P_{n}$ обратим.

Определим оператор
$A:\bi_{k=1}^{n}H_{k}\to H$ равенством
$A(x_{1},\ldots,x_{n})=\sum_{k=1}^{n}x_{k}$. Тогда
$\tmd{Im}(A)=H$, а поэтому
$A^{*}$ "--- изоморфное вложение (т.е. $\|A^{*}x\|\gs\ve\|x\|,\,x\in H$  для некоторого $\ve>0$),
$AA^{*}$ обратим. Поскольку
$A^{*}x=(P_{1}x,\ldots,P_{n}x),\,x\in H$, то оператор
$AA^{*}=P_{1}+\ldots+P_{n}$ обратим.
\end{remark}

Приведём некоторые достаточные условия для того, чтобы
$H_{1}+\ldots+H_{n}$ было замкнуто.

\begin{example}
Пусть
$\mc{A}=\mc{A}(P_{1},\ldots,P_{n})$ "--- алгебра, порожденная
$P_{1},\ldots,P_{n}$. Предположим, что
$\dim \mc{A}<\infty$. Тогда для элемента этой алгебры
$Q=P_{1}+\ldots+P_{n}$ существует ненулевой полином
$R(z)$, для которого
$R(Q)=0$. Поэтому
$\sigma(Q)$ состоит из конечного числа точек, а поэтому
$H_{1}+\ldots+H_{n}$ "--- подпространство.

В качестве примеров приведём такие системы подпространств:
\begin{enumerate}
\item пусть ортопроекторы
$P_{1},\ldots,P_{n}$ попарно коммутируют.
Тогда
$\dim \mc{A}\ls 2^{n}-1$. В этом случае
$\sigma(P_{1}+\ldots+P_{n})\in\{0,1,\ldots,n\}$.
\item
Пусть система
$H_{1},\ldots,H_{n}$ есть простой $n$-кой подпространств, связанной с
деревом
$\mathbb{G}$ (см. \cite{SamStr}). Тогда
$\dim \mc{A}\ls n^{2}$.
\end{enumerate}
\end{example}

Теперь, используя критерии замкнутости суммы пары подпространств, можем получить
критерии замкнутости суммы $n$ подпространств. Начнём с примера.

\begin{example}
Сумма
$H_{1}+\ldots+H_{n}=H$ тогда и только тогда, когда существует
$\ve>0$, такое, что
$P_{\Delta^{\bot}}+P_{\wt{H}}\gs\ve I_{X}$. Возьмём
$x=(x_{1},\ldots,x_{n})\in X$. Тогда условие
$((P_{\Delta^{\bot}}+P_{\wt{H}})x,x)\gs\ve\|x\|^{2}$
принимает вид
\begin{equation*}
\sum_{k=1}^{n}\|P_{k}x_{k}\|^{2}\gs\dfrac{1}{n}\|\sum_{k=1}^{n}x_{k}\|^{2}-(1-\ve)
\left(\sum_{k=1}^{n}\|x_{k}\|^{2}\right).
\end{equation*}
\end{example}

\subsection{Критерий замкнутости суммы подпространств в терминах их
ортогональных дополнений}

Сумма
$H_{1}+\ldots+H_{n}=H$ тогда и только тогда, когда
$\Delta^{\bot}+\wt{H}=X$. Поскольку для подпространств
$M_{1},M_{2}$
$M_{1}+M_{2}$ замкнуто тогда и только тогда, когда
$M_{1}^{\bot}+M_{2}^{\bot}$ замкнуто, то последнее условие равносильно следующему:
$\Delta\cap\wt{H}^{\bot}=0$ и
$\Delta+\wt{H}^{\bot}$ замкнуто. В силу утверждения
\ref{ST:pair3} последнее равносильно
существованию
$\ve_{1}>0$, такого, что $\|(I-P_{\Delta})x\|^{2}\gs\ve_{1}\|x\|^{2}$ для произвольного
$x\in\wt{H}^{\bot}$.
Возьмём
$x=(x_{1},\ldots,x_{n}),\,x_{i}\in H_{i}^{\bot},\,1\ls i\ls n$.
Тогда неравенство $\|(I-P_{\Delta})x\|^{2}\gs\ve_{1}\|x\|^{2}$ можно записать в виде
\begin{equation*}
\sum_{k=1}^{n}\|x_{k}\|^{2}-\dfrac{1}{n}\|\sum_{k=1}^{n}x_{k}\|^{2}\gs\ve_{1}\sum_{k=1}^{n}\|x_{k}\|^{2},
\end{equation*}
что равносильно
\begin{equation*}
\sum_{i<j}\|x_{i}-x_{j}\|^{2}\gs n\ve_{1}\sum_{k=1}^{n}\|x_{k}\|^{2}.
\end{equation*}
Таким образом, $\sum_{k=1}^{n}H_{k}=H$ тогда и только тогда, когда существует
$\ve>0$, такое, что для произвольных
$x_{k}\in H_{k}^{\bot},\,1\ls k\ls n$
\begin{equation}\label{E:orth1}
\sum_{i<j}\|x_{i}-x_{j}\|^{2}\gs\ve\sum_{k=1}^{n}\|x_{k}\|^{2}.
\end{equation}

Далее
$\Gamma$ обозначает неориентированный граф с множеством вершин
$V(\Gamma)=\{1,2,\ldots,n\}$. Обозначим через
$E(\Gamma)$ множество рёбер
$\Gamma$. Будем писать
$i\sim j$, если
$i$ соединено с
$j$.

\begin{statement}\label{ST:orth1}
Пусть $\Gamma$ "--- связный граф. Утверджения равносильны:
\begin{enumerate}
\item
$\sum_{k=1}^{n}H_{k}=H$,
\item
существует
$\ve>0$, такое, что для произвольных
$x_{k}\in H_{k}^{\bot},\,1\ls k\ls n$
\begin{equation*}
\sum_{\{i,j\}\in E(\Gamma)}\|x_{i}-x_{j}\|^{2}\gs\ve\sum_{k=1}^{n}\|x_{k}\|^{2}.
\end{equation*}
\end{enumerate}
\end{statement}
\begin{proof}
\textbf{$(2)\Rightarrow(1)$} очевидно.

\textbf{$(1)\Rightarrow (2)$}
Пусть
$i\neq j$. Поскольку
$\Gamma$ связен, то существует путь
$i=i(0)\sim i(1)\sim\ldots\sim i(m)=j$. Тогда
\begin{equation*}
\|x_{i}-x_{j}\|^{2}\ls(\sum_{k=0}^{m-1}\|x_{i(k+1)}-x_{i(k)}\|)^{2}\ls m\sum_{k=0}^{m-1}\|x_{i(k+1)}-x_{i(k)}\|^{2}.
\end{equation*}
Теперь из неравенства
(\ref{E:orth1}) получим требуемое утверждение.
\end{proof}

Пусть, например,
$E(\Gamma)=\{\{1,2\},\{2,3\},\ldots,\{n-1,n\}\}$, т.е.
$\Gamma$ "--- цепь.
Из утверждения \ref{ST:orth1} следует, что сумма
$H_{1}+\ldots+H_{n}=H$ тогда и только тогда, когда существует
$\ve>0$ (уменьшенное $\ve$ из утверждения \ref{ST:orth1}), такое, что
для произвольных
$x_{k}\in H_{k}^{\bot},\,\|x_{k}\|=1,\,1\ls k\ls n$, и для произвольных
$t_{k}\in\mathbb{C},\,1\ls k\ls n$, не равных одновременно 0, выполнено
\begin{equation*}
\sum_{k=1}^{n-1}\|t_{k}x_{k}-t_{k+1}x_{k+1}\|^{2}>\ve\sum_{k=1}^{n}|t_{k}|^{2}.
\end{equation*}
Это условие равносильно положительной определённости эрмитово матрицы
\begin{equation*}
A_{\mathbb{C}}=\begin{pmatrix}
1-\ve&-(x_{1},x_{2})&0&\ldots&0\\
-(x_{2},x_{1})&2-\ve&-(x_{2},x_{3})&\ddots&\vdots\\
0&-(x_{3},x_{2})&\ddots&\ddots&0\\
\vdots&\ddots&\ddots&2-\ve&-(x_{n-1},x_{n})\\
0&\ldots&0&-(x_{n},x_{n-1})&1-\ve
\end{pmatrix}.
\end{equation*}
Используя критерий Сильвестра, можно получить необходимые и достаточные условия для положительной определенности
$A_{\mathbb{C}}$.

\begin{example} Пусть
$n=3$. Сумма
$H_{1}+H_{2}+H_{3}=H$ тогда и только тогда, когда
\begin{equation*}
\sup\{|(x_{1},x_{2})|^{2}+|(x_{2},x_{3})|^{2},\,x_{k}\in H_{k}^{\bot},\,\|x_{k}\|=1,\,k=1,2,3\}<2.
\end{equation*}
\end{example}

Далее мы будем рассматривать графы
$\Gamma$ с положительными весами на рёбрах.
Это значит, что каждому ребру
$e=\{i,j\}\in E(\Gamma)$ сопоставлено число
$\gamma_{e}>0$, которое мы будем обозначать
$\gamma_{i,j}=\gamma_{j,i}$. Для вершины
$i$ определим
$\rho_{i}=\sum_{j\sim i}\gamma_{i,j}$.

\begin{statement}\label{ST:orth2}
Пусть $\Gamma$ "--- связный граф с положительными весами на рёбрах. Утверждения равносильны:
\begin{enumerate}
\item
$\sum_{k=1}^{n}H_{k}=H$,
\item
существует
$\ve>0$, такое, что для произвольных
$x_{k}\in H_{k}^{\bot},\,1\ls k\ls n$
\begin{equation}\label{E:orth2}
2\sum_{\{i,j\}\in E(\Gamma)}\gamma_{i,j}|(x_{i},x_{j})|\ls\sum_{i=1}^{n}(\rho_{i}-\ve)\|x_{i}\|^{2}.
\end{equation}
\end{enumerate}
\end{statement}
\begin{proof}
\textbf{$(2)\Rightarrow (1)$}
Для произвольных
$x_{k}\in H_{k}^{\bot},\,1\ls k\ls n$ имеем:
\begin{align*}
&\sum_{\{i,j\}\in E(\Gamma)}\gamma_{i,j}\|x_{i}-x_{j}\|^{2}=
\sum_{i=1}^{n}\rho_{i}\|x_{i}\|^{2}-2\sum_{\{i,j\}\in E(\Gamma)}\gamma_{i,j}\tmd{Re}(x_{i},x_{j})\gs\\
&\gs\sum_{i=1}^{n}\rho_{i}\|x_{i}\|^{2}-2\sum_{\{i,j\}\in E(\Gamma)}\gamma_{i,j}|(x_{i},x_{j})|\gs
\ve\sum_{i=1}^{n}\|x_{i}\|^{2}.
\end{align*}
Из утверждения
\ref{ST:orth1} следует, что
$\sum_{k=1}^{n}H_{k}=H$.

\textbf{$(1)\Rightarrow (2)$}
Докажем требуемое утверждение индукцией по
$|E(\Gamma)|$. Наименьшее возможное значение
$|E(\Gamma)|$ равно
$n-1$, и достигается тогда и только тогда, когда
$\Gamma$ "--- дерево. Из утверждения
\ref{ST:orth1} следует, что существует
$\ve>0$, такое, что для произвольных
$y_{k}\in H_{k}^{\bot}$ выполнено:
\begin{equation*}
\sum_{\{i,j\}\in E(\Gamma)}\gamma_{i,j}\|y_{i}-y_{j}\|^{2}\gs\ve\sum_{i=1}^{n}\|y_{i}\|^{2},
\end{equation*}
т.е.
\begin{equation}\label{E:orth3}
2\sum_{\{i,j\}\in E(\Gamma)}\gamma_{i,j}\tmd{Re}(y_{i},y_{j})\ls\sum_{i=1}^{n}(\rho_{i}-\ve)\|y_{i}\|^{2}.
\end{equation}
Зафиксируем произвольные
$x_{k}\in H_{k}^{\bot},\,1\ls k\ls n$. В неравенство
(\ref{E:orth3}) подставим
$y_{k}=e^{i\varphi_{k}}x_{k}$, где
$\varphi_{k}\in\mathbb{R}$. Поскольку
$\Gamma$ "--- дерево, то
$\varphi_{k}$ можно выбрать так, что
$(y_{k},y_{l})=|(x_{k},x_{l})|$ если
$k\sim l$. Тогда из неравенства
(\ref{E:orth3}) следует требуемое.

Выполним индукционный переход. Пусть
$|E(\Gamma)|\gs n$. Тогда в
$\Gamma$ есть цикл
$i_{1},i_{2},\ldots,i_{m}$,
$i_{k}\sim i_{k+1},\,1\ls k\ls m$
(тут $i_{m+1}=i_{1}$). Для
$k=1,2,\ldots,m$ обозначим
$\Gamma_{k}$ граф, полученый из
$\Gamma$ удалением ребра
$\{i_{k},i_{k+1}\}$. Ясно, что
$\Gamma_{k}$ связен. Веса
$\gamma_{i,j}^{(k)}$ на рёбрах
$\Gamma_{k}$ определим следующим образом:
если ребро
$\{i,j\}$ является ребром цикла
$\{i_{p},i_{p+1}\}$, то
$\gamma_{i,j}^{(k)}=\gamma_{i,j}$, иначе
$\gamma_{i,j}^{(k)}=\frac{m-1}{m}\gamma_{i,j}$.
Из предположения индукции следует, что существует
$\ve_{k}>0$ такое, что для
$\Gamma_{k}$ выполнено неравенство
(\ref{E:orth2}). Уменьшив
$\ve_{k}$, можно считать, что
$\ve_{1}=\ldots=\ve_{m}=\ve$. Для произвольных
$x_{i}\in H_{i}^{\bot}$ имеем:
\begin{equation*}
2\sum_{\{i,j\}\in E(\Gamma_{k})}\gamma_{i,j}^{(k)}|(x_{i},x_{j})|\ls\sum_{i=1}^{n}(\rho_{i}^{(k)}-\ve)\|x_{i}\|^{2}.
\end{equation*}
Прибавив эти неравенства для
$k=1,2,\ldots,m$, получим неравенство
\begin{equation*}
2(m-1)\sum_{\{i,j\}\in E(\Gamma)}\gamma_{i,j}|(x_{i},x_{j})|\ls\sum_{i=1}^{n}((m-1)\rho_{i}-m\ve)\|x_{i}\|^{2}.
\end{equation*}
Разделив его на
$m-1$, получим требуемое утверждение.
\end{proof}

\section{Линейно независимые системы подпространств}\label{S:linind}

\begin{definition}
Подпространства
$X_{1},\ldots,X_{n}$ банахова пространства
$X$ линейно независимы, если из
\begin{equation*}
\sum_{j=1}^{n}x_{j}=0,~x_{j}\in X_{j},~1\ls j\ls n
\end{equation*}
следует
$x_{1}=\ldots=x_{n}=0$.
\end{definition}

В этом разделе мы будем изучать $n$-ки подпространств с замкнутой суммой
с дополнительным условием линейной независимости $n$-ки. Свойство линейной
независимости <<хорошо сочетается>> со свойством замкнутости суммы. Как мы
увидим в этом разделе, некоторые свойства $n$-ок, неверные только при
условии замкнутости суммы, становятся верными при дополнительном условии
линейной независимости.

Отметим, что при $n\gs 3$ задача об описание неприводимых
$n$-ок ортопроекторов с точностью до унитарной эквивалентности
чрезвычайно сложна (см. например
\cite{OstSam}). Поэтому
<<хорошего>> представления
(типа представления П. Халмоша для пары подпространств) $n$-ки подпространств при $n\gs 3$ нет.
Однако некоторые представления для линейно независимых $n$-ок подпространств с суммой $H$
существуют
(см., например, \cite{Sunder}).
В указанной работе используется, но не доказано, что если
$H_{1},\ldots,H_{n}$ "--- линейно независимые подпространства $H$
и $H_{1}+\ldots+H_{n}=H$, то для всех
$1\ls k\ls n$ сумма $H_{1}+\ldots+H_{k}$ замкнута. Это просто восполнить
с помощью следствия \ref{C:sumlinind}).

Приведём несколько примеров условий, при выполнении которых подпространства
$H_{1},\ldots,H_{n}$ линейно независимы, а их сумма замкнута.

\begin{example}
Пусть
$H_{1},\ldots,H_{n}$ "--- ненулевые подпространства
гильбертова пространства
$H$,
$P_{1},\ldots,P_{n}$ "--- соответствующие ортопроекторы,
$\tau_{1},\ldots,\tau_{n}$ "--- положительные числа.
Предположим, для некоторого
$\gamma>0$ выполнено
$\tau_{1}P_{1}+\ldots+\tau_{n}P_{n}\ls\gamma I$.
Покажем, что если
\begin{equation}\label{E:inequalitygamma}
\gamma<\dfrac{\sum_{j=1}^{n}\tau_{j}}{n-1},
\end{equation}
то
$H_{1},\ldots,H_{n}$ линейно независимы, а их сумма замкнута.

Поскольку
$H_{j}\neq 0,\,1\ls j\ls n$, то
$\tau_{j}\ls\gamma,\,1\ls j\ls n$.
Определим подпространства
$M_{k}=\ol{H_{1}+\ldots+H_{k}},\,1\ls k\ls n$
Проводя рассуждения,
аналогичные доказательству леммы 2 в ~\cite{SumProject.}, получим:
для всех
$1\ls k\ls n$ выполнено:
$\tau_{1}P_{1}+\ldots+\tau_{k}P_{k}\gs (\tau_{1}+\ldots+\tau_{k}-(k-1)\gamma)P_{M_{k}}.$
Подставив в это неравенство
$k=n$ и используя неравенство (\ref{E:inequalitygamma}), получим, что
$H_{1}+\ldots+H_{n}$ "--- подпространство. Покажем, что
$H_{1},\ldots,H_{n}$ линейно независимы.
Предположим, что существует
$x\in H_{n}\cap(H_{1}+\ldots+H_{n-1}),\,x\neq 0$.
Тогда из неравенств
\begin{equation*}
\gamma||x||^{2}\gs((\tau_{1}P_{1}+\ldots+\tau_{n-1}P_{n-1})x,x)+\tau_{n}(P_{n}x,x)\gs
(\sum_{k=1}^{n-1}\tau_{k}-(n-2)\gamma)||x||^{2}+\tau_{n}||x||^{2}
\end{equation*}
получаем:
$\gamma\gs\dfrac{\sum_{j=1}^{n}\tau_{j}}{n-1}$, противоречие.
Из соображений симметрии имеем:
для всех
$1\ls i\ls n$ $H_{i}\cap(\sum_{j\neq i}H_{j})=0$, что
и означает линейную независимость $H_{1},\ldots,H_{n}$.

Оценка (\ref{E:inequalitygamma}) для $\gamma$, при выполнении которой
$H_{1},\ldots,H_{n}$ линейно независимы и их сумма замкнута, вообще говоря, неулучшаема.
В работе \cite{SumProject.} показано, что в гильбертовом пространстве
$H=\mathbb{C}^{n-1}$
существуют одномерные подпространства
$H_{1},\ldots,H_{n}$, для которых
$P_{1}+\ldots+P_{n}=\frac{n}{n-1}I$.
Ясно, что
$H_{1},\ldots,H_{n}$ линейно зависимы.
\end{example}

\begin{example}\cite{Gua}
Пусть $X,Y,Z$ "--- банаховы пространства, $T:X\rightarrow Y, S:Y\rightarrow Z$
"--- линейные непрерывные операторы. Предположим, что $ST:X\rightarrow Z$ "--- изоморфизм.
Тогда $\tmd{Im}(T)$ "--- подпространство; подпространства $\tmd{Im}(T)$ и $\ker(S)$
линейно независимы и их сумма равна $Y$.
\end{example}

\subsection{Критерий замкнутости суммы линейно независимых подпространств. Примеры
его использования.}

\begin{theorem}\label{T:sumlinind}
Пусть $X_{1},\ldots,X_{n}$ "--- подпространства
банахова пространства $X$. Справедливы утверждения:
\begin{enumerate}
\item если для некоторого
$\ve>0$ для всех
$x_{j}\in X_{j},~1\ls j\ls n$ выполнено:
\begin{equation*}
\|x_{1}+\ldots+x_{n}\|\gs \ve(\|x_{1}\|+\ldots+\|x_{n-1}\|)
\end{equation*}
то
$X_{1},\ldots,X_{n}$ линейно независимы и их сумма "---
подпространство.
\item если
$X_{1},\ldots,X_{n}$ "--- линейно независимые подпространства,
$X_{1}+\ldots+X_{n}$ "--- подпространство, то
существует
$\ve>0$, такое, что для всех
$x_{j}\in X_{j},~1\ls j\ls n$ выполнено:
\begin{equation*}
\|x_{1}+\ldots+x_{n}\|\gs \ve(\|x_{1}\|+\ldots+\|x_{n}\|).
\end{equation*}
\end{enumerate}
\end{theorem}
\begin{proof}
Докажем
$(1)$.
Ясно, что $X_{1},\ldots,X_{n}$
линейно независимы.
Покажем, что $X_{1}+\ldots+X_{n}$ "--- подпространство. Пусть
$x_{k,1}+\ldots+x_{k,n}\to z$,
де $x_{k,i}\in X_{i},\,k\gs 1,\,1\ls i\ls n$.
Поскольку
\begin{equation*}
\|(x_{k,1}+\ldots+x_{k,n})-(x_{l,1}+\ldots+x_{l,n})\|\gs \ve\sum_{i=1}^{n-1}\|x_{k,i}-x_{l,i}\|,
\end{equation*}
то
при $1\ls i\ls n-1$ последовательность $\{x_{k,i},\,k\gs 1\}$  фундаментальна, а потому
$x_{k,i}\to x_{i}\in X_{i}$.
Поэтому последовательность $x_{k,n}\to x_{n}\in X_{n}$.
Тогда
\begin{equation*}
z=x_{1}+\ldots+x_{n},
\end{equation*}
откуда следует замкнутость $X_{1}+\ldots+X_{n}$.

Докажем $(2)$.
На пространстве
$X_{1}\oplus\ldots\oplus X_{n}$
определим норму
$\|(x_{1},\ldots,x_{n})\|=\sum_{i=1}^{n}\|x_{i}\|$, относительно
которой пространство банахово.
Рассмотрим оператор
$A:X_{1}\oplus\ldots\oplus X_{n}\rightarrow X_{1}+\ldots+X_{n}$
определённый равенством
$A(x_{1},\ldots,x_{n})=x_{1}+\ldots+x_{n}$.

Очевидно, $A$ является линейным непрерывным оператором между банаховыми пространствами, причём $A$ "--- биекция.
По теореме Банаха $A$ обратим, откуда непосредственно следует нужное неравенство.
\end{proof}

\begin{remark}
Вместо $\sum_{i=1}^{n}\|x_{i}\|$ иногда удобнее рассматривать эквивалентную величину
\begin{equation*}
(\sum_{i=1}^{n}\|x_{i}\|^{p})^{\frac{1}{p}}, 1\leq p<\infty.
\end{equation*}
Тогда условие замкнутости $X_{1}+\ldots+X_{n}$ и линейной независимости
$X_{1},\ldots,X_{n}$ имеет вид:
\begin{equation*}
\|\sum_{i=1}^{n}x_{i}\|^{p}\gs \ve\left(\sum_{i=1}^{n}\|x_{i}\|^{p}\right)
\end{equation*}
для некоторого $\ve>0$ и всех $x_{1}\in X_{1},\ldots,x_{n}\in X_{n}$.
\end{remark}

\begin{conclusion}\label{C:sumlinind}
Пусть $X_{1},\ldots,X_{n}$ "--- линейно независимые подпространства банахова пространства $X$ и
сумма $X_{1}+\ldots+X_{n}$ "--- подпространство. Тогда для произвольного набора индексов $i(1),\ldots,i(k)$ сумма
$X_{i(1)}+\ldots+X_{i(k)}$ "--- подпространство.
\end{conclusion}

\begin{conclusion}\label{C:sumsublinind}
Пусть $X_{1},\ldots,X_{n}$ "--- линейно независимые подпространства
банахова пространства
$X$ и сумма
$X_{1}+\ldots+X_{n}$ "--- подпространство. Пусть для каждого
$1\ls k \ls n$ подпространство $Y_{k}\subset X_{k}$.
Тогда
$Y_{1}+\ldots+Y_{n}$ "--- подпространство.
\end{conclusion}

Следствие ~\ref{C:sumlinind} показывает, что при условии линейной независимости
из замкнутости суммы всех подпространств следует замкнутость суммы любого поднабора.
Если не накладывать условия линейной независимости, то это утверждение неверно.
Более того, справедливо следующее утверждение.

\begin{statement}
Обозначим
$\mathbb{N}_{n}=\{1,2,\ldots,n\}$. Пусть множество
$\{I\subset\mathbb{N}_{n},|I|\gs 2\}$
разбито на две части
$I_{c}$ и $I_{nc}$. Тогда существуют гильбертово
пространство
$H$ и $n$-ка подпространств
$H_{1},\ldots,H_{n}$ в нем такие, что
если
$I\in I_{c}$ то $\sum_{j\in I}H_{j}$ замкнута, а
если
$I\in I_{nc}$ то $\sum_{j\in I}H_{j}$ не замкнута.
\end{statement}
\begin{proof}
\textbf{1.}
Сначала построим пример гильбертова пространства
$H$ и его подпространств
$H_{1},\ldots,H_{n}$, таких, что для произвольного
$|I|\ls n-1$ сумма $\sum_{j\in I}H_{j}$ замкнута, а
$H_{1}+\ldots+H_{n}$ не замкнута. Для
$k\gs 1$ в пространстве
$\mathbb{C}^{n}$ выберем ортонормированный базис
$e_{1},\ldots,e_{n}$, и определим набор одномерных подпространств
\begin{equation*}
H_{1,k}=\langle e_{1}\rangle,\ldots,H_{n-1,k}=\langle e_{n-1}\rangle,
H_{n,k}=\langle e_{1}+\ldots+e_{n-1}+\dfrac{1}{k}e_{n}\rangle.
\end{equation*}
Легко видеть, что
гильбертово пространство
$H=\bi_{k=1}^{\infty}\mathbb{C}^{n}$
и набор подпространств $H_{j}=\bi_{k=1}^{\infty}H_{j,k},~1\ls j\ls n$
обладают нужными свойствами.

\textbf{2.}
Докажем утверждение индукцией по $n$.
При
$n=2$ утверждение очевидно.
Выполним индукционный переход. Предположим, что
$\{1,2,\ldots,n\}\in I_{c}$. Для
$n$ гильбертовых пространств $L_{1},\ldots,L_{n}$
определим
$H=L_{1}\oplus\ldots\oplus L_{n}$, определим
\begin{equation*}
H_{j}=R_{j,1}\oplus\ldots\oplus R_{j,j-1}\oplus L_{j}\oplus R_{j,j+1}\oplus\ldots\oplus R_{j,n}
\end{equation*}
для некоторых подпространств
$R_{j,i}\subset L_{i}$. Очевидно,
$H_{1}+\ldots+H_{n}=H$. Осталось выбрать подпространства
$\{R_{j,i}\}$, чтобы выполнялись все нужные условия.
Фиксируем произвольное
$1\ls m\ls n$. На подпространства
$R_{j,m},~j\neq m$ наложим такие условия:
если множество
$I\subset\mathbb{N}_{n}\setminus\{m\}$ есть элементом
$I_{c}$, то $\sum_{j\in I}R_{j,m}$ замкнута, если
$I$ является элементом $I_{nc}$, то сумма
$\sum_{j\in I}R_{j,m}$ не замкнута. Из индукционного предположения
следует, что существует гильбертово пространство
$L_{m}$ и набор подпространств $R_{j,m},~j\neq m$
с нужными свойствами. Очевидно, построенная $n$-ка
$H_{1},\ldots,H_{n}$ гильбертова  пространства $H$
обладает нужными свойствами.

Пусть теперь
$\{1,2,\ldots,n\}\in I_{nc}$. Возьмём гильбертово пространство
$K$ и набор подпространств
$K_{1},\ldots,K_{n}$ в нем такие, что для всех
множеств $I\subset\mathbb{N}_{n},~|I|\ls n-1$ сумма $\sum_{j\in I}K_{j}$
замкнута, а сумма
$K_{1}+\ldots+K_{n}$ не замкнута. Для гильбертовых пространств
$L_{1},\ldots,L_{n}$ определим
$H=L_{1}\oplus\ldots\oplus L_{n}\oplus K$ и подпространства
\begin{equation*}
H_{j}=R_{j,1}\oplus\ldots\oplus R_{j,j-1}\oplus L_{j}\oplus R_{j,j+1}\oplus\ldots\oplus R_{j,n}\oplus K_{j}
\end{equation*}
для некоторых подпространств
$R_{i,j}\subset L_{i}$. Очевидно, сумма
$H_{1}+\ldots+H_{n}$ не замкнута.
Для фиксированного
$1\ls m\ls n$ набор подпространств
$R_{j,m},~j\neq m$ пространства $L_{m}$
(и само $L_{m}$) выбирается аналогично предыдущему случаю.
\end{proof}

Аналогом предыдущего утверждения для линейно независимых подпространств, с
учётом следствия \ref{C:sumlinind}, есть следующее утверждение.

\begin{statement}
Пусть
$n\gs 2$. Пусть множество
$\{I\subset\mathbb{N}_{n},|I|\gs 2\}$
разбито на две части
$I_{c}$ и $I_{nc}$, причём выполнено следующее условие: если
$A,B\subset \mathbb{N}_{n},\,|A|\gs 2,\,|B|\gs 2,\,A\subset B$ и
$B\in I_{c}$, то $A\in I_{c}$. Тогда существует гильбертово
пространство
$H$, и линейно независимые подпространства
$H_{1},\ldots,H_{n}$ в нём, такие, что
если
$I\in I_{c}$ то $\sum_{j\in I}H_{j}$ замкнута, а
если
$I\in I_{nc}$ то $\sum_{j\in I}H_{j}$ не замкнута.
\end{statement}
\begin{proof}
Положим
$N=2^{n}-1-n$. Занумеруем элементы множества
$\{I\subset \mathbb{N}_{n},\,|I|\gs 2\}$ числами от 1 до
$N$, т.е. $\{I\subset \mathbb{N}_{n},\,|I|\gs 2\}=\{I_{1},\ldots,I_{N}\}$.
Гильбертово пространство
$H$ и линейно независимые подпространства $H_{1},\ldots,H_{n}$ в нём
будем искать в следующем виде:
\begin{equation*}
H_{i}=R_{i,1}\oplus\ldots\oplus R_{i,N},\,1\ls i\ls n,\,H=R_{1}\oplus\ldots\oplus R_{N},
\end{equation*}
где для каждого
$1\ls j\ls N$
$R_{i,j},\,1\ls i\ls n$ есть подпространства гильбертова пространства $R_{j}$.
Покажем, как их выбрать, чтобы выполнялись требуемые условия.
Фиксируем $1\ls j\ls N$ и пусть
$I_{j}=\{i(1),\ldots,i(s)\}$.
Предположим, что $I_{j}\in I_{nc}$. При доказательстве предыдущего утверждения мы
показали, что существуют гильбертово пространство
$K$ и линейно независимые подпространства $K_{1},\ldots,K_{s}$ в нем такие, что
$\sum_{l\in J}K_{l}$ замкнута для всех
$J\subset\{1,2,\ldots,s\},\,|J|<s$, а сумма
$K_{1}+\ldots+K_{s}$ не замкнута. Положим
$R_{j}=K$,
$R_{i(1),j}=K_{1},\ldots,R_{i(s),j}=K_{s}$, остальные
$R_{i,j}=0$.
Предположим, что $I_{j}\in I_{c}$. Тогда положим
$R_{j}=\mathbb{C}^{1},\,R_{1,j}=\ldots=R_{n,j}=0$.
Легко видеть, что построенные таким образом подпространства
$H_{1},\ldots,H_{n}$ гильбертова пространства $H$ удовлетворяют всем нужным условиям.
\end{proof}

Приведём несколько примеров использования теоремы \ref{T:sumlinind}.

\begin{example}
Пусть
$X,Y$ "--- банаховы пространства,
$a_{1},\ldots,a_{n}:X\to Y$ таковы, что оператор
$a=\sum_{k=1}^{n}a_{k}$
обратим.
Тогда подпространства
$X_{i}=\bigcap_{j\neq i}\ker(a_{j}),~ 1\ls i\ls n$
линейно независимы и их сумма замкнута.
\begin{proof}
Пусть $x_{i}\in X_{i},~1\ls i\ls n$.
Рассмотрим произвольное
$1\ls k\ls n$. Тогда
\begin{equation*}
\|a_{k}\|\|\sum_{i=1}^{n}x_{i}\|\gs\|a_{k}x_{k}\|=\|ax_{k}\|\gs\|a^{-1}\|^{-1}\|x_{k}\|.
\end{equation*}
Прибавив полученые неравенства для $k=1,2,\ldots,n$, из теоремы
\ref{T:sumlinind} получим нужное утверждение.
\end{proof}

В частности, если
$\sum_{k=1}^{n}H_{k}=H$, то
$\sum_{k=1}^{n}P_{k}$ обратим, а поэтому подпространства
$M_{k}=\bigcap_{i\neq k}H_{i}^{\bot},\,1\ls k\ls n$ линейно независимы и их сумма замкнута.
\end{example}

\begin{example}
Покажем связь между операторами с конечным спектром и системами
линейно независимых подпространств с суммой $H$. Пусть
$A:H\to H$ "--- линейный непрерывный оператор с конечным спектром
$\sigma(A)=\{\lambda_{1},\ldots,\lambda_{n}\}$.
Пусть
$\Gamma_{k},\, 1\ls k\ls n$ "--- окружность достаточно малого радиуса с центром в
$\lambda_{k}$. Напомним, что проектором Рисса, соответствующим изолированной точке спектра
$\lambda_{k}$, называется оператор
\begin{equation*}
R_{k}=\dfrac{1}{2 \pi i}\int_{\Gamma_{k}}R(z,A)\,dz=\dfrac{1}{2 \pi i}\int_{\Gamma_{k}}(zI-A)^{-1}\,dz.
\end{equation*}
Хорошо известно (см. например \cite{Sadovnychy}), что
$R_{k}\neq 0$ при всех $k$,
$R_{k}^{2}=R_{k}$, а также
$R_{i}R_{j}=0$ при
$i\neq j$, сумма
$R_{1}+\ldots+R_{n}=I$.

Определим
$H_{k}=\tmd{Im}(R_{k}),\,1\ls k\ls n$.
Очевидно,
$H_{1},\ldots,H_{n}$ "--- ненулевые линейно независимые подпространства,
сумма которых равна
$H$. Справедливо и обратное утверждение.

\begin{statement}
Пусть
$\{\lambda_{1},\ldots,\lambda_{n}\}$ "--- множество $n$
комплексных чисел,
$H_{1},\ldots,H_{n}$ "--- ненулевые линейно независимые подпространства $H$,
сумма которых равна
$H$. Тогда найдётся линейный непрерывный оператор
$A:H\to H$ со спектром $\sigma(A)=\{\lambda_{1},\ldots,\lambda_{n}\}$, для которого
$H_{k}$ есть подпространством Рисса, соответствующим $\lambda_{k}$ для всех $1\ls k\ls n$.
\end{statement}

\begin{proof}
Каждый
$x\in H$ однозначно представляется в виде
$x=\sum_{k=1}^{n}x_{k}$, где
$x_{k}\in H_{k},\,1\ls k\ls n$.
Определим
$Q_{k}x=x_{k}$, тогда из теоремы
\ref{T:sumlinind} следует, что
$Q_{k}$ ограничен.

Определим линейный непрерывный оператор
$A:H\to H$ равенством
$Ax=\lambda_{1}Q_{1}+\ldots+\lambda_{n}Q_{n}$.
Иначе говоря, для $x=x_{1}+\ldots+x_{n},\,x_{i}\in H_{i},\,1\ls i\ls n$,
$Ax=\lambda_{1}x_{1}+\ldots+\lambda_{n}x_{n}$.
Очевидно,
$\sigma(A)=\{\lambda_{1},\ldots,\lambda_{n}\}$.
Для точки $\lambda_{k},1\ls k\ls n$ соответствующий
проектор Рисса равен
\begin{equation*}
R_{k}=\dfrac{1}{2\pi i}\int_{\Gamma_{k}}(zI-A)^{-1}\,dz=
\dfrac{1}{2\pi i}\int_{\Gamma_{k}}\sum_{j=1}^{n}\dfrac{1}{z-\lambda_{j}}Q_{j}\,dz=Q_{k},
\end{equation*}
а поэтому соответствующее подпространство Рисса равно $H_{k}$.
\end{proof}
\end{example}

Покажем несколько применений теоремы \ref{T:sumlinind}
для гильбертова пространства $H$.

\begin{example}
Пусть
$H_{1},\ldots,H_{n}$ "--- подпространства
гильбертова пространства
$H$. Сформулируем условие их линейной независимости и замкнутости их
суммы. Это равносильно существованию
$\ve>0$, такого, что для произвольных действительных чисел
$t_{1},\ldots,t_{n}\in\mathbb{R}$, не равных одновременно 0, и для
произвольных
$x_{k}\in H_{k},\,\|x_{k}\|=1,\,1\ls k\ls n$ справедливо неравенство
$\|t_{1}x_{1}+...+t_{n}x_{n}\|^{2}> \ve(t_{1}^{2}+...+t_{n}^{2})$.
Поставленое условие равносильно положительной определённости действительной
симметричной матрицы
\begin{equation*}
A_{\mathbb{R}}=\begin{pmatrix}
1-\ve&\tmd{Re}(x_{1},x_{2})&\ldots&\tmd{Re}(x_{1},x_{n})\\
\tmd{Re}(x_{2},x_{1})&1-\ve&\ddots&\vdots\\
\vdots&\ddots&\ddots&\tmd{Re}(x_{n-1},x_{n})\\
\tmd{Re}(x_{n},x_{1})&\ldots&\tmd{Re}(x_{n},x_{n-1})&1-\ve
\end{pmatrix}.
\end{equation*}
Используя критерий Сильвестра, можно получить необходимые и достаточные условия для
положительной определённости $A_{\mathbb{R}}$.
В частности, при
$n=2$ имеем условие
$|\tmd{Re}(x_{1},x_{2})|<1-\ve$, которое
равносильно условию:
$|(x_{1},x_{2})|<1-\ve$
для всех
$x_{j}\in H_{j},\,\|x_{j}\|=1,\,j=1,2$.

При
$n=3$ имеем условия:
\begin{enumerate}
\item
$|\tmd{Re}(x_{1},x_{2})|<1-\ve$
\item
$(\tmd{Re}(x_{1},x_{2}))^{2}+(\tmd{Re}(x_{2},x_{3}))^{2}+(\tmd{Re}(x_{3},x_{1}))^{2}<
(1-\ve)^{2}+\dfrac{2}{1-\ve}\tmd{Re}(x_{1},x_{2})\tmd{Re}(x_{2},x_{3})\tmd{Re}(x_{3},x_{1})$.
\end{enumerate}

Аналогично можно сформулировать условие линейной независимости
и замкнутости суммы подпространств
$H_{1},\ldots,H_{n}$, считая числа
$t_{1},\ldots,t_{n}$ комплексными. Тогда приходим к условию
положительной определённости эрмитовой матрицы
\begin{equation*}
A_{\mathbb{C}}=\begin{pmatrix}
1-\ve&(x_{1},x_{2})&\ldots&(x_{1},x_{n})\\
(x_{2},x_{1})&1-\ve&\ddots&\vdots\\
\vdots&\ddots&\ddots&(x_{n-1},x_{n})\\
(x_{n},x_{1})&\ldots&(x_{n},x_{n-1})&1-\ve
\end{pmatrix}.
\end{equation*}
Используя критерий Сильвестра, можно
получить необходимые и достаточные условия для положительной определённости $A_{\mathbb{C}}$.
В частности, при
$n=3$ имеем условия:
\begin{enumerate}
\item
$|(x_{1},x_{2})|<1-\ve$
\item
$|(x_{1},x_{2})|^{2}+|(x_{2},x_{3})|^{2}+|(x_{3},x_{1})|^{2}<
(1-\ve)^{2}+\dfrac{2}{1-\ve}\tmd{Re}((x_{1},x_{2})(x_{2},x_{3})(x_{3},x_{1}))$.
\end{enumerate}
\end{example}

\begin{example}
Пусть $H$ "--- гильбертово пространство, $n\geq 2$.
Пусть для каждого
$k=1,\ldots,n$ $\{e_{k,s},\,s\in \mathbb{Z}\}$ "--- ортонормированная система в
$H$. Пусть
$H_{k}$ "--- подпространство, порожденное системой
$\{e_{k,s},\,s\in \mathbb{Z}\}$.

Используя предыдущий пример, найдём достаточные условия, при которых
подпространства $H_{1},\ldots,H_{n}$ линейно независимы, а их сумма
$H_{1}+\ldots+H_{n}$ замкнута.

Для каждого целого
$p$ и индексов $i\neq j,\,1\ls i,j\ls n$
определим
\begin{equation*}
\alpha_{i,j,p}=\sup_{l-k=p}|(e_{i,k},e_{j,l})|.
\end{equation*}
Очевидно, что
$\alpha_{i,j,p}=\alpha_{j,i,-p}$. Для каждой пары
$i\neq j,~1\ls i,j\ls n$ определим
$\beta_{i,j}=\sum_{p\in \mathbb{Z}}\alpha_{i,j,p}.$
Предположим, что $\beta_{i,j}<\infty$ для всех $i\neq j$.

Пусть для
$k=1,\ldots,n$ имеем
$x_{k}=\sum_{s\in \mathbb{Z}}x_{k,s}e_{k,s} \in H_{k}$, причём
$\|x_{k}\|=1$. Это означает, что $\sum_{s}|x_{k,s}|^{2}=1$.
Используя неравенство Коши-Буняковского, получим, что при $i\neq j$
\begin{equation*}
|(x_{i},x_{j})|\leqslant\sum_{v,w\in\mathbb{Z}}|x_{i,v}||x_{j,w}|\alpha_{i,j,w-v}\leqslant\beta_{i,j}.
\end{equation*}
Отсюда
$|\tmd{Re}(x_{i},x_{j})|\leqslant\beta_{i,j}$ при $i\neq j$.

Для $t_{1},\ldots,t_{n}\in \mathbb{R}$ рассмотрим квадратичную форму
\begin{equation*}
\sum_{k\neq l}t_{k}t_{l}\tmd{Re}(x_{k},x_{l})\gs
-\sum_{k\neq l}\beta_{k,l}|t_{k}||t_{l}|.
\end{equation*}

Отсюда следует, что если существует $\ve>0$, для которого
матрица
\begin{equation*}
B=\begin{pmatrix}
1-\ve&-\beta_{1,2}&\ldots&-\beta_{1,n}\\
-\beta_{2,1}&1-\ve&\ldots&\vdots\\
\vdots&\ddots&\ddots&-\beta_{n-1,n}\\
-\beta_{n,1}&\ldots&-\beta_{n,n-1}&1-\ve
\end{pmatrix}
\end{equation*}
положительно определена, то подпространства
$H_{1},\ldots,H_{n}$ линейно независимы и их сумма является
подпространством.
В частности, если
$\max_{k}(\sum_{l\neq k}\beta_{k,l})<1$,
то это условие выполнено (при
$\ve<1-\max_{k}(\sum_{l\neq k}\beta_{k,l}))$.
\end{example}

Следующий пример мотивирован результатами работ \cite{Grobler} и \cite{probability1}.
К сожалению, в работе
\cite{Grobler} есть ошибки.
На с.184 определение $\ol{\mathbb{P}}(A\cap B)=\mathbb{P}_{1}(A)\mathbb{P}_{2}(B)$ для
$A\in\mc{F}_{1},\,B\in\mc{F}_{2}$ вообще говоря, некорректно, поскольку множество
может допускать различные представления в виде $A\cap B,\,A\in\mc{F}_{1},\,B\in\mc{F}_{2}$.
Кроме того, наложив дополнительные условия на $\sigma$-алгебры
$\mc{F}_{1},\mc{F}_{2}$, чтобы определение
$\ol{\mathbb{P}}$ стало корректным (такое условие мы приводим в следующем примере),
для продолжения по Каратеодори
надо показать, что $\ol{\mathbb{P}}$ есть мерой на полуалгебре
$\Gamma=\{A\cap B,\,A\in\mc{F}_{1},\,B\in\mc{F}_{2}\}$.

\begin{example}(Замкнутость суммы маргинальных подпространств)
Пусть
$(\Omega,\mc{F},\textbf{P})$ "--- вероятностное пространство,
$\mc{F}_{1},\ldots,\mc{F}_{n}$ "---
$\sigma$-алгебры, причём $\mc{F}_{j}\subset\mc{F},\,1\ls j\ls n$.
Будем говорить, что $\mc{F}$-измеримые комплекснозначные случайные величины $\xi$ и $\eta$
эквивалентны, если
$\textbf{P}\{\xi\neq\eta\}=0$. Определим пространство
$H$ как множество классов эквивалентности всех $\mc{F}$-измеримых комплекснозначных
случайных величин $\xi$, для которых $\int_{\Omega}|\xi|^{2}\,d\tbf{P}<\infty$ и
$\int_{\Omega}\xi\,d\tbf{P}=0$. Тогда $H$ "--- гильбертово пространство
относительно скалярного произведения
$(\xi,\eta)=\int_{\Omega}\xi\ol{\eta}\,d\tbf{P}$. Для каждого
$1\ls i\ls n$ пусть
$H_{i}$ "--- множество классов эквивалентности из $H$, в которых есть хотя бы одна $\mc{F}_{i}$-измеримая
случайная величина. Легко видеть, что
$H_{i}$ "--- подпространство $H$.
$H_{i}$ называют маргинальным подпространством $H$. Наша цель: показать, что если существует
$\ve>0$, такое, что для произвольных $A_{1}\in\mc{F}_{1},\ldots,A_{n}\in\mc{F}_{n}$
\begin{equation}\label{E:equation1}
\tbf{P}(A_{1}\cap\ldots\cap A_{n})\gs\ve\tbf{P}(A_{1})\ldots\tbf{P}(A_{n}),
\end{equation}
то
$H_{1},\ldots,H_{n}$ линейно независимы и их сумма "--- подпространство.
Итак, далее считаем, что выполнено неравенство (\ref{E:equation1}).
Через $\tbf{P}_{n}$ обозначим продакт-меру
$\underbrace{\tbf{P}\times\ldots\times\tbf{P}}_{n}$
на измеримом пространстве
$\Omega^{n}=\underbrace{\Omega\times\ldots\times\Omega}_{n}$
с $\sigma$-алгеброй
$\underbrace{\mc{F}\otimes\ldots\otimes\mc{F}}_{n}$ (порожденной измеримыми брусами
$A_{1}\times\ldots\times A_{n}$, $A_{i}\in\mc{F},\,1\ls i\ls n$).

\begin{lemma}\label{L:lemma1}
Пусть
$X_{i},Y_{i}\in\mc{F}_{i},\,1\ls i\ls n$ и
$X_{1}\cap\ldots \cap X_{n}\subset Y_{1}\cap\ldots\cap Y_{n}$. Тогда
\begin{equation*}
\tbf{P}_{n}((X_{1}\times\ldots\times X_{n})\setminus (Y_{1}\times\ldots\times Y_{n}))=0.
\end{equation*}
\end{lemma}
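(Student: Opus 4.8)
The plan is to break the set difference into finitely many ``rectangles'' and to apply the standing inequality \eqref{E:equation1}, which is assumed in force throughout the present example, to each of them separately.

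First I would record the elementary identity
\[
(X_{1}\times\ldots\times X_{n})\setminus (Y_{1}\times\ldots\times Y_{n})=\bigcup_{j=1}^{n}Z_{j},\qquad
Z_{j}:=X_{1}\times\ldots\times (X_{j}\setminus Y_{j})\times\ldots\times X_{n},
\]
where in $Z_{j}$ the $j$-th factor is replaced by $X_{j}\setminus Y_{j}$ and the remaining factors stay $X_{i}$. Indeed, a point $(\omega_{1},\ldots,\omega_{n})$ lies in the left-hand side exactly when $\omega_{i}\in X_{i}$ for every $i$ while $\omega_{j}\notin Y_{j}$ for at least one index $j$. Since $\tbf{P}_{n}$ is a product measure and $X_{j}\setminus Y_{j}\in\mc{F}_{j}\subset\mc{F}$, each $Z_{j}$ is a measurable rectangle with $\tbf{P}_{n}(Z_{j})=\tbf{P}(X_{j}\setminus Y_{j})\prod_{i\neq j}\tbf{P}(X_{i})$.

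Next I would apply \eqref{E:equation1} to the family $A_{j}=X_{j}\setminus Y_{j}$, $A_{i}=X_{i}$ for $i\neq j$. Its intersection satisfies $A_{1}\cap\ldots\cap A_{n}=(X_{1}\cap\ldots\cap X_{n})\setminus Y_{j}$, and this set is empty because the hypothesis gives $X_{1}\cap\ldots\cap X_{n}\subset Y_{1}\cap\ldots\cap Y_{n}\subset Y_{j}$. Hence $0=\tbf{P}(A_{1}\cap\ldots\cap A_{n})\gs\ve\,\tbf{P}(A_{1})\cdots\tbf{P}(A_{n})=\ve\,\tbf{P}_{n}(Z_{j})\gs0$, so $\tbf{P}_{n}(Z_{j})=0$ for every $j$; a finite union of $\tbf{P}_{n}$-null sets is $\tbf{P}_{n}$-null, which is the asserted equality. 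The argument is routine once the decomposition has been written down; the one genuinely substantive point is that each ``bad'' rectangle $Z_{j}$ has empty diagonal $A_{1}\cap\ldots\cap A_{n}$, and this is exactly where the nestedness hypothesis is used. I would also point out that some hypothesis beyond $X_{i},Y_{i}\in\mc{F}_{i}$ is genuinely needed: the statement is false for arbitrary such sets (for instance $n=2$, $\Omega=\{a,b\}$ with uniform $\tbf{P}$ and $\mc{F}_{1}=\mc{F}_{2}=2^{\Omega}$, $X_{1}=\Omega$, $X_{2}=Y_{1}=Y_{2}=\{a\}$), so it is essential that \eqref{E:equation1} be available at this stage of the proof.
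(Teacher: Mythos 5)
Your proof is correct and follows essentially the same route as the paper: both arguments observe that the hypothesis forces each ``defect'' set $X_{1}\cap\ldots\cap(X_{i}\setminus Y_{i})\cap\ldots\cap X_{n}$ to be empty and then invoke \eqref{E:equation1} to conclude that the corresponding rectangle is $\tbf{P}_{n}$-null. You merely make explicit the finite-union decomposition that the paper leaves as ``hence the lemma follows,'' which is a welcome clarification but not a different method.
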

\begin{proof}
Поскольку
$X_{1}\cap\ldots \cap X_{n}\subset Y_{1}\cap\ldots\cap Y_{n}$, то для каждого
$1\ls i\ls n$ $X_{1}\cap\ldots\cap X_{i-1}\cap(X_{i}\setminus Y_{i})\cap X_{i+1}\ldots\cap X_{n}=\varnothing$.
Из неравенства
\eqref{E:equation1} следует, что
$\tbf{P}_{n}(X_{1}\times\ldots\times X_{i-1}\times(X_{i}\setminus Y_{i})\times X_{i+1}\ldots\times X_{n})=0$.
Отсюда следует нужное.
\end{proof}

Определим класс множеств
$\mc{P}=\{X_{1}\cap\ldots\cap X_{n},\,X_{i}\in\mc{F}_{i},\,1\ls i\ls n\}$. Очевидно,
$\mc{P}$ "--- полуалгебра. Определим функцию множеств $\tbf{Q}$ на $\mc{P}$ равенством
\begin{equation*}
\tbf{Q}(X_{1}\cap\ldots\cap X_{n})=\tbf{P}(X_{1})\ldots\tbf{P}(X_{n})=
\tbf{P}_{n}(X_{1}\times\ldots\times X_{n}).
\end{equation*}
В силу леммы \ref{L:lemma1} $\tbf{Q}$ определена корректно.
Мы покажем, что
$\tbf{Q}$ "--- мера на $\mc{P}$. Для этого нам нужна следующая лемма.

\begin{lemma} Пусть множества
$X_{k,j}\in\mc{F}_{j},\,1\ls k\ls t,\,1\ls j\ls n$. Тогда
\begin{equation*}
\tbf{P}_{n}(\bigcup_{k=1}^{t}X_{k,1}\times\ldots\times X_{k,n})\ls\dfrac{1}{\ve}
\tbf{P}(\bigcup_{k=1}^{t}X_{k,1}\cap\ldots\cap X_{k,n}).
\end{equation*}
\end{lemma}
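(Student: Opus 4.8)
The plan is to reduce the finite union of measurable rectangles on the left-hand side to a \emph{disjoint} union of rectangles, apply \eqref{E:equation1} to each piece, and then transport the resulting estimate back to $\Omega$ along the diagonal embedding $\delta\colon\Omega\to\Omega^{n}$, $\delta(\omega)=(\omega,\ldots,\omega)$, which has the property $\delta^{-1}(A_{1}\times\ldots\times A_{n})=A_{1}\cap\ldots\cap A_{n}$.

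First I would disjointify the union. For each fixed coordinate $j$ the finitely many sets $X_{1,j},\ldots,X_{t,j}\in\mc{F}_{j}$ generate a finite partition of $\Omega$ into atoms (the nonempty sets obtained by intersecting, for each $k$, either $X_{k,j}$ or its complement), all of which lie in $\mc{F}_{j}$; the product of these $n$ partitions is a finite partition of $\Omega^{n}$ into measurable rectangles. Each rectangle $X_{k,1}\times\ldots\times X_{k,n}$ is a union of cells of this partition, hence so is the whole union on the left, and we may rewrite it as a pairwise disjoint union
\[
\bigcup_{k=1}^{t}X_{k,1}\times\ldots\times X_{k,n}=\bigsqcup_{m=1}^{s}Y_{m,1}\times\ldots\times Y_{m,n},\qquad Y_{m,j}\in\mc{F}_{j}.
\]
Then, using $\tbf{P}_{n}(Y_{m,1}\times\ldots\times Y_{m,n})=\tbf{P}(Y_{m,1})\ldots\tbf{P}(Y_{m,n})$ and applying \eqref{E:equation1} with $A_{j}=Y_{m,j}$ for each $m$,
\[
\tbf{P}_{n}\Bigl(\bigcup_{k=1}^{t}X_{k,1}\times\ldots\times X_{k,n}\Bigr)=\sum_{m=1}^{s}\tbf{P}(Y_{m,1})\ldots\tbf{P}(Y_{m,n})\ls\frac{1}{\ve}\sum_{m=1}^{s}\tbf{P}(Y_{m,1}\cap\ldots\cap Y_{m,n}).
\]

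It remains to identify the last sum with $\tbf{P}\bigl(\bigcup_{k=1}^{t}X_{k,1}\cap\ldots\cap X_{k,n}\bigr)$. Since $\delta^{-1}$ carries a rectangle to the intersection of its sides, disjointness of the rectangles $Y_{m,1}\times\ldots\times Y_{m,n}$ forces the sets $Y_{m,1}\cap\ldots\cap Y_{m,n}$ to be pairwise disjoint --- a common point $\omega$ of two of them would produce the common point $\delta(\omega)$ of the corresponding rectangles --- so the last sum equals $\tbf{P}\bigl(\bigcup_{m=1}^{s}Y_{m,1}\cap\ldots\cap Y_{m,n}\bigr)$; and applying $\delta^{-1}$ to the displayed set identity, and using that preimages commute with unions, gives $\bigcup_{m}Y_{m,1}\cap\ldots\cap Y_{m,n}=\bigcup_{k}X_{k,1}\cap\ldots\cap X_{k,n}$. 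Combining the two estimates proves the lemma.

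I do not expect a genuine obstacle. The only point needing care is the last step: the observation that the diagonal embedding pulls rectangles back to intersections of their sides, so that both the disjointness and the union identity established in $\Omega^{n}$ descend to $\Omega$. Together with Lemma~\ref{L:lemma1} (well-definedness of $\tbf{Q}$), this estimate is precisely what is needed to conclude that $\tbf{Q}$ is a measure on $\mc{P}$.
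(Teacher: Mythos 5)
Your proof is correct and follows essentially the same route as the paper's: disjointify the union of rectangles, apply additivity of $\tbf{P}_{n}$ and the hypothesis \eqref{E:equation1} term by term, and recombine. You merely make explicit two steps the paper leaves tacit --- the construction of the disjoint refinement via coordinate-wise atoms, and the use of the diagonal embedding to see that disjointness and the union identity descend from $\Omega^{n}$ to $\Omega$.
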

\begin{proof}
Представим множество
$\bigcup_{k=1}^{t}X_{k,1}\times\ldots\times X_{k,n}$ в виде
$\bigcup_{l=1}^{s}Y_{l,1}\times\ldots\times Y_{l,n}$, где
множества
$Y_{l,j}\in\mc{F}_{j},\,1\ls l\ls s,\,1\ls j\ls n$, и множества
$Y_{l,1}\times\ldots\times Y_{l,n},\,1\leqslant l\leqslant s$ попарно не пересекаются.
Имеем:
\begin{align*}
&\tbf{P}_{n}(\bigcup_{k=1}^{t}X_{k,1}\times\ldots\times X_{k,n})=
\tbf{P}_{n}(\bigcup_{l=1}^{s}Y_{l,1}\times\ldots\times Y_{l,n})=\\
&=\sum_{l=1}^{s}\tbf{P}_{n}(Y_{l,1}\times\ldots\times Y_{l,n})\ls
\dfrac{1}{\ve}\sum_{l=1}^{s}\tbf{P}(Y_{l,1}\cap\ldots\cap Y_{l,n})=\\
&=\dfrac{1}{\ve}\tbf{P}(\bigcup_{l=1}^{s}Y_{l,1}\cap\ldots\cap Y_{l,n})=
\dfrac{1}{\ve}\tbf{P}(\bigcup_{k=1}^{t}X_{k,1}\cap\ldots\cap X_{k,n}),
\end{align*}
что и требовалось доказать.
\end{proof}

Теперь покажем, что
$\tbf{Q}$ является мерой на $\mc{P}$. Предположим, что
\begin{equation*}
A_{1}\cap\ldots\cap A_{n}=\bigcup_{k=1}^{\infty}(A_{k,1}\cap\ldots\cap A_{k,n}),
\end{equation*}
где
$A_{j},A_{k,j}\in\mc{F}_{j},\,1\ls j\ls n,\,k\gs 1$ и множества
$A_{k,1}\cap\ldots\cap A_{k,n},\,k\gs 1$ попарно не пересекаются. Определим на $\Omega^{n}$ функции
\begin{equation*}
F_{1}(x_{1},\ldots,x_{n})=\mathbb{I}_{A_{1}}(x_{1})\ldots\mathbb{I}_{A_{n}}(x_{n}),\,
F_{2}(x_{1},\ldots,x_{n})=\sum_{k=1}^{\infty}\mathbb{I}_{A_{k,1}}(x_{1})\ldots\mathbb{I}_{A_{k,n}}(x_{n}).
\end{equation*}
Покажем, что
$F_{1}=F_{2}$ почти всюду относительно
$\tbf{P}_{n}$. Для этого предположим, что для элемента
$(x_{1},\ldots,x_{n})$ $F_{1}\neq F_{2}$. Возможны следующие варианты:
\begin{enumerate}
\item
$F_{1}=0,F_{2}\gs 1$,
\item
$F_{1}=1,F_{2}\gs 2$,
\item
$F_{1}=1,F_{2}=0$.
\end{enumerate}
Множество
$(x_{1},\ldots,x_{n})$, для которых выполнено $(1)$ имеет вид
\begin{equation*}
\bigcup_{k=1}^{\infty}\bigcup_{j=1}^{n}
(A_{k,1}\times\ldots\times A_{k,j-1}\times (A_{k,j}\setminus A_{j})\times A_{k,j+1}\times\ldots\times A_{k,n}),
\end{equation*}
которое, в силу леммы
\ref{L:lemma1} имеет $\tbf{P}_{n}$-меру 0.

Множество элементов $(x_{1},\ldots,x_{n})$, для которых имеет место второй случай,
вложено в множество
\begin{equation*}
\bigcup_{k>l}((A_{k,1}\cap A_{l,1})\times\ldots\times(A_{k,n}\cap A_{l,n})),
\end{equation*}
которое, как следует из
(\ref{E:equation1}), имеет $\tbf{P}_{n}$-меру 0.

Осталось показать, что множество элементов, для которых имеет место третий вариант, имеет
$\tbf{P}_{n}$-меру 0. Это множество равно
$A_{1}\times\ldots\times A_{n}\setminus\bigcup_{k=1}^{\infty}A_{k,1}\times\ldots\times A_{k,n}$.
Имеем (напомним, что $\mathbb{N}_{t}=\{1,2,\ldots,t\}$):
\begin{align*}
&\tbf{P}_{n}\left(A_{1}\times\ldots\times A_{n}\setminus\bigcup_{k=1}^{\infty}A_{k,1}\times\ldots\times A_{k,n}\right)=\\
&=\lim_{t\to\infty}
\tbf{P}_{n}\left(A_{1}\times\ldots\times A_{n}\setminus\bigcup_{k=1}^{t}A_{k,1}\times\ldots\times A_{k,n}\right)=\\
&=\lim_{t\to\infty}\tbf{P}_{n}\left(\bigcup_{I_{1}\bigcup\ldots\bigcup I_{n}=\mathbb{N}_{t}}
(A_{1}\setminus\bigcup_{k\in I_{1}}A_{k,1})\times\ldots\times(A_{n}\setminus\bigcup_{k\in I_{n}}A_{k,n})\right)\ls\\
&\ls\lim_{t\to\infty}\dfrac{1}{\ve}\tbf{P}\left(\bigcup_{I_{1}\bigcup\ldots\bigcup I_{n}=\mathbb{N}_{t}}
(A_{1}\setminus\bigcup_{k\in I_{1}}A_{k,1})\cap\ldots\cap(A_{n}\setminus\bigcup_{k\in I_{n}}A_{k,n})\right)=\\
&=\lim_{t\to\infty}\frac{1}{\varepsilon}\tbf{P}\left(A_{1}\cap\ldots\cap A_{n}\setminus
\bigcup_{k=1}^{t}A_{k,1}\cap\ldots\cap A_{k,n}\right)=\\
&=\lim_{t\to\infty}\dfrac{1}{\ve}\tbf{P}\left(\bigcup_{k\gs t+1}A_{k,1}\cap\ldots\cap A_{k,n}\right)=0.
\end{align*}
Интегрируя равенство
$F_{1}=F_{2}$, выполненое почти всюду относительно
$\tbf{P}_{n}$ по множеству $\Omega^{n}$ и мере $\tbf{P}_{n}$, получим
\begin{equation*}
\tbf{Q}(A_{1}\cap\ldots\cap A_{n})=\sum_{k=1}^{\infty}\tbf{Q}(A_{k,1}\cap\ldots\cap A_{k,n}).
\end{equation*}
Таким образом, мы показали, что
$\tbf{Q}$ "--- мера на $\mc{P}$.
$\tbf{Q}$ имеет единственное продолжение на
$\sigma$-алгебру $\mc{G}=\sigma a(\mc{F}_{1},\ldots,\mc{F}_{n})\subset\mc{F}$.
При этом
$\tbf{P}(A)\gs\ve\tbf{Q}(A),\,A\in\mc{G}$ (поскольку это неравенство выполнено для множеств
$A=A_{1}\cap\ldots\cap A_{n},\,A_{j}\in\mc{F}_{j},\,1\ls j\ls n$). Также отметим, что если
$\xi_{j}$ есть $\mc{F}_{j}$-измеримая случайная величина, интегрируемая по $\tbf{P}$ ($1\ls j\ls n$), то
величина
$\xi_{1}\ldots\xi_{n}$ $\mc{G}$-измерима и интегрируема по мере $\tbf{Q}$, причём
\begin{equation*}
\int_{\Omega}\xi_{1}\ldots\xi_{n}\,d\tbf{Q}=\int_{\Omega}\xi_{1}\,d\tbf{P}\ldots\int_{\Omega}\xi_{n}\,d\tbf{P}.
\end{equation*}

Теперь уже легко показать, что $H_{1},\ldots,H_{n}$
линейно независимы и их сумма замкнута. Пусть
$\xi_{j}\in H_{j}$ есть $\mc{F}_{j}$-измеримая случайная величина ($1\ls j\ls n$). Неравенство
\begin{equation*}
\|\xi_{1}+\ldots+\xi_{n}\|^{2}\gs\ve(\|\xi_{1}\|^{2}+\ldots+\|\xi_{n}\|^{2})
\end{equation*}
равносильно неравенству
\begin{equation*}
\int_{\Omega}|\xi_{1}+\ldots+\xi_{n}|^{2}\,d\tbf{P}\gs\ve\int_{\Omega}|\xi_{1}+\ldots+\xi_{n}|^{2}\,d\tbf{Q},
\end{equation*}
которое, очевидно, выполнено.

\begin{remark}
Полученый результат можно применить в следующем случае:
пусть
$(\Omega_{j},\mc{G}_{j})$ "--- измеримое пространство ($1\ls j\ls n$).
Пусть
$\Omega=\Omega_{1}\times\ldots\times\Omega_{n}$, и $\sigma$-алгебра
$\mc{F}=\mc{G}_{1}\otimes\ldots\otimes\mc{G}_{n}$.
Определим $\sigma$-алгебру
\begin{equation*}
\mc{F}_{j}=\{\Omega_{1}\times\ldots\times\Omega_{j-1}\times A_{j}\times\Omega_{j+1}\times\ldots\times\Omega_{n},\,
A_{j}\in\mc{G}_{j}\}
\end{equation*}
для всех
$1\ls j\ls n$. Определим маргинальную вероятность
$\tbf{P}_{j}$ на $(\Omega_{j},\mc{G}_{j})$ ($1\ls j\ls n$) равенством
\begin{equation*}
\tbf{P}_{j}(A_{j})=\tbf{P}(\Omega_{1}\times\ldots\times\Omega_{j-1}\times A_{j}\times\Omega_{j+1}\times\ldots\times\Omega_{n}),\,A_{j}\in\mc{G}_{j}.
\end{equation*}
Если существует $\ve>0$, такое, что для произвольных
$A_{1}\in\mc{G}_{1},\ldots,A_{n}\in\mc{G}_{n}$
\begin{equation*}
\tbf{P}(A_{1}\times\ldots\times A_{n})\gs\ve\tbf{P}_{1}(A_{1})\ldots\tbf{P}_{n}(A_{n}),
\end{equation*}
то маргинальные подпространства $H_{1},\ldots,H_{n}$ (в нашем случае $H_{j}$ есть множество (классов эквивалентности)
случайных величин $\xi(x_{1},\ldots,x_{n})=\eta(x_{j})$, где
$\eta$ $\mc{G}_{j}$-измерима) линейно независимы, а их сумма "--- подпространство.
\end{remark}
\end{example}

\subsection{Свойство обратного наилучшего приближения системы
подпространств гильбертова пространства}\label{SS:IBAP}

Пусть
$H$ "--- гильбертово пространство,
$H_{i},\,1\ls i\ls n$ "--- система его подпространств с соответствующими ортопроекторами
$P_{i}$.

\begin{definition}\label{DEF:IBAP}
Будем говорить, что $n$-ка подпространств
$H_{1},\ldots,H_{n}$ имеет свойство обратного наилучшего приближения
относительно набора линейных непрерывных операторов $A_{i},\,1\ls i\ls n$, если
\begin{enumerate}
\item
$\tmd{Im}(A_{i})\subset H_{i},\,1\ls i\ls n$,
\item
для
произвольных $u_{1}\in H_{1},\ldots,u_{n}\in H_{n}$ найдётся
$x\in H$ такой, что
$A_{k}x=u_{k},\,1\ls k\ls n$.
\end{enumerate}
\end{definition}

Системы подпространств, обладающие свойством обратного наилучшего приближения относительно
$A_{i}=P_{i}$, изучаются в
\cite{IBAP}. Далее мы предполагаем условие
$(1)$ определения
\ref{DEF:IBAP} выполненым.

Определим оператор
$A:H\to\bi_{k=1}^{n}H_{k}$ равенством
$Ax=(A_{1}x,\ldots,A_{n}x),\,x\in H$, тогда
$A^{*}(y_{1},\ldots,y_{n})=\sum_{k=1}^{n}A_{k}^{*}y_{k}$.
Система подпространств
$H_{i},\,1\ls i\ls n$ обладает свойством обратного наилучшего приближения относительно
$A_{i},\,1\ls i\ls n$ тогда и только тогда, когда
$\tmd{Im}(A)=\bi_{k=1}^{n}H_{k}$, что равносильно тому, что
$A^{*}$ является изоморфным вложением (т.е.
$\|A^{*}y\|\gs\ve\|y\|,\,y\in\bi_{k=1}^{n}H_{k}$ для некоторого $\ve>0$). Последнее равносильно существованию
$\ve>0$ такого, что для произвольных
$y_{i}\in H_{i},\,1\ls i\ls n$,
\begin{equation}\label{E:IBAP1}
\|\sum_{k=1}^{n}A_{k}^{*}y_{k}\|\gs\ve\sum_{k=1}^{n}\|y_{k}\|
\end{equation}
(мы перешли к эквивалентной норме).

\begin{statement}\label{ST:IBAP1}
Система подпространств
$H_{i},\,1\ls i\ls n$ обладает свойством обратного наилучшего приближения относительно набора
операторов
$A_{i},\,1\ls i\ls n$ тогда и только тогда, когда выполнены следующие два условия:
\begin{enumerate}
\item
$A_{k}^{*}\upr_{H_{k}}$ "--- изоморфное вложение,
\item
подпространства
$A_{k}^{*}(H_{k}),\,1\ls k\ls n$ линейно независимы и их сумма замкнута.
\end{enumerate}
\end{statement}
\begin{proof}
$\Rightarrow$
Из неравенства
\ref{E:IBAP1} следует, что
$\|A_{k}^{*}y_{k}\|\gs\ve\|y_{k}\|,\,y_{k}\in H_{k}$, т.е.
$A_{k}^{*}\upr_{H_{k}}$ является изоморфным вложением.
Поэтому
$A_{k}^{*}(H_{k})$ "--- подпространство. С помощью неравенства
\ref{E:IBAP1} легко получить
$(2)$.

$\Leftarrow$
Из
$(1)$ следует, что существует
$\ve_{1}>0$ такое, что для
$k=1,2,\ldots,n$,
$y_{k}\in H_{k}$ выполнено
$\|A_{k}^{*}y_{k}\|\gs\ve_{1}\|y_{k}\|$.
Из
$(2)$ следует, что существует
$\ve_{2}>0$, такое, что для произвольных
$z_{i}\in A_{i}^{*}(H_{i}),\,1\ls i\ls n$ выполнено
$\|\sum_{i=1}^{n}z_{i}\|\gs\ve_{2}\sum_{i=1}^{n}\|z_{i}\|$.
Тогда для произвольных
$y_{i}\in H_{i},\,1\ls i\ls n$, имеем
\begin{equation*}
\|\sum_{i=1}^{n}A_{i}^{*}y_{i}\|\gs\ve_{2}\sum_{i=1}^{n}\|A_{i}^{*}y_{i}\|\gs\ve_{1}\ve_{2}\sum_{i=1}^{n}\|y_{i}\|,
\end{equation*}
откуда следует требуемое утверждение.
\end{proof}

\begin{statement}\label{ST:IBAP2}
Пусть
$\ker(A_{k})=\ker(A_{k}^{*}),\,1\ls k\ls n$.
Система подпространств
$H_{k},\,1\ls k\ls n$ обладает свойством обратного наилучшего приближения относительно набора операторов
$A_{k},\,1\ls k\ls n$ тогда и только тогда, когда выполнены следующие два условия:
\begin{enumerate}
\item
$a_{k}=A_{k}\upr_{H_{k}}:H_{k}\to H_{k}$ обратим, $k=1,2,\ldots,n$,
\item
$H_{1},\ldots,H_{n}$ линейно независимы и их сумма замкнута.
\end{enumerate}
\end{statement}
\begin{proof}
Относительно ортогонального разложения
$H=H_{k}\oplus H_{k}^{\bot}$ оператор
$A_{k}=a_{k}\oplus 0_{H_{k}^{\bot}}$.

$\Rightarrow$
Из предыдущего утверждения следует, что
$a_{k}^{*}$ является изоморфным вложением. Поэтому
$\tmd{Im}(a_{k})=H_{k}$. Поскольку $\ker(A_{k})=\ker(A_{k}^{*})$, то
$\ker(a_{k})=\ker(a_{k}^{*})=0$.
Поэтому
$a_{k}$ обратим.
Следовательно, $a_{k}^{*}$ также обратим,
$\tmd{Im}(a_{k}^{*})=H_{k}$. Из утверждения
\ref{ST:IBAP1} следует, что
$H_{1},\ldots,H_{n}$ линейно независимы и их сумма замкнута.

$\Leftarrow$
Следует из утверждения
\ref{ST:IBAP1}.
\end{proof}

\begin{example}
Пусть
$A_{k}=P_{k},\,1\ls k\ls n$.
Система подпространств
$H_{1},\ldots,H_{n}$ обладает свойством обратного наилучшего приближения относительно
$P_{k},\,1\ls k\ls n$ тогда и только тогда, когда
$H_{1},\ldots,H_{n}$ линейно независимы и их сумма замкнута.

Другие критерии того, что
$H_{1},\ldots,H_{n}$ обладает свойством
обратного наилучшего приближения относительно $P_{k},\,1\ls k\ls n$
см. в
\cite{IBAP}, теорема 2.8.
Отметим, что эти критерии следуют непосредственно
из приведенного критерия и критериев замкнутости суммы пары подпространств.
\end{example}

\subsection{Спектральные свойства линейной комбинации ортопроекторов
на линейно независимые подпространства $H_{1},\ldots,H_{n}$ с суммой
$H_{1}+\ldots+H_{n}=H$}\label{SS:linearcombination}

При изучении спектральных свойств линейной комбинации ортопроекторов
нам будет нужно следующее утверждение.

\begin{statement}
Пусть
$H_{1},\ldots,H_{n}$ "---
подпространства $H$,
$\alpha_{1},\ldots,\alpha_{n}$ "--- положительные числа.
Следующие утверждения эквивалентны:
\begin{enumerate}
\item для произвольных $x_{i}\in H_{i},\,1\ls i\ls n$, справедливо:
\begin{equation*}
\|x_{1}+\ldots+x_{n}\|^{2}\gs\dfrac{1}{\alpha_{1}}\|x_{1}\|^{2}+\ldots+\dfrac{1}{\alpha_{n}}\|x_{n}\|^{2}
\end{equation*}

\item $H_{1},\ldots,H_{n}$ линейно независимы и
$\sigma(\alpha_{1}P_{1}+\ldots+\alpha_{n}P_{n})\cap(0,1)=\varnothing$.
\end{enumerate}
\end{statement}
\begin{proof}
$(1)\Rightarrow (2)$
Очевидно,
$H_{1},\ldots,H_{n}$ линейно независимы. Для $x\in H$ положим
$x_{i}=\alpha_{i}P_{i}x,\,1\ls i\ls n$. Тогда имеем:
\begin{equation*}
((\sum_{i=1}^{n}\alpha_{i}P_{i})^{2}x,x)\gs((\sum_{i=1}^{n}\alpha_{i}P_{i})x,x).
\end{equation*}
Поэтому
$(\sum_{i=1}^{n}\alpha_{i}P_{i})^{2}\gs\sum_{i=1}^{n}\alpha_{i}P_{i}$,
т.е.
$\sigma(\alpha_{1}P_{1}+\ldots+\alpha_{n}P_{n})\cap(0,1)=\varnothing$.

$(2)\Rightarrow (1)$
Ясно, что
$\sum_{k=1}^{n}H_{k}$ замкнуто и
$\sum_{k=1}^{n}\alpha_{k}P_{k}\gs P_{H_{1}+\ldots+H_{n}}$.
Поэтому
$H_{1}+\ldots+H_{n}=\tmd{Im}(\alpha_{1}P_{1}+\ldots+\alpha_{n}P_{n})$.
Поэтому
для произвольных
$x_{i}\in H_{i},\,1\ls i\ls n$ существует
$x\in H$, для которого
$(\sum_{k=1}^{n}\alpha_{k}P_{k})x=\sum_{k=1}^{n}x_{k}$. Из линейной независимости
$H_{1},\ldots,H_{n}$ следует
$x_{k}=\alpha_{k}P_{k}x,\,1\ls k\ls n$.
Повторяя рассуждения $(1)\Rightarrow (2)$,
получим нужное.
\end{proof}

\begin{conclusion}
Пусть $H_{1},\ldots,H_{n}$ "--- линейно независимые подпространства $H$,
$\alpha_{1},\ldots,\alpha_{n},\varepsilon$ "--- положительные числа. Если
$\alpha_{1}P_{1}+\ldots+\alpha_{n}P_{n}\gs\ve I$,
то для произвольных (попарно различных) индексов
$i(1),\ldots,i(s)$
\begin{equation*}
\alpha_{i(1)}P_{i(1)}+\ldots+\alpha_{i(s)}P_{i(s)}\gs\ve P_{H_{i(1)}+\ldots+H_{i(s)}}
\end{equation*}
(из следствия \ref{C:sumlinind} следует замкнутость $H_{i(1)}+\ldots+H_{i(s)}$).
\end{conclusion}

Частный случай следующего утверждения
(для $\alpha_{1}=\ldots=\alpha_{n}=1$)
сформулирован в
~\cite{Raban}.

\begin{statement}\label{ST:linearcombination}
Пусть
$H_{1},\ldots,H_{n}$ "---  ненулевые линейно независимые подпространства
$H$, $\alpha_{1},\ldots,\alpha_{n},\ve$ "--- положительные числа. Если
$\alpha_{1}P_{1}+\ldots+\alpha_{n}P_{n}\gs\ve I$, то
\begin{equation*}
\alpha_{1}P_{1}+\ldots+\alpha_{n}P_{n}\ls(\sum_{i=1}^{n}\alpha_{i}-(n-1)\ve)I.
\end{equation*}
\end{statement}

\begin{proof}
Сразу отметим, что для всех
$1\ls i\ls n$ выполнено:
$\alpha_{i}\gs\ve$.
Докажем требуемое утверждение индукцией по
$n$.
При
$n=2$ оно следует из свойств спектра линейной комбинации двух ортопроекторов
(см. утверждение \ref{ST:pairsymmetry}).
Сделаем шаг индукции. Пусть
$\alpha_{1}P_{1}+\ldots+\alpha_{n+1}P_{n+1}\gs\ve I$.
Тогда
\begin{equation*}
\alpha_{1}P_{1}+\ldots+\alpha_{n}P_{n}\gs\ve P_{H_{1}+\ldots+H_{n}},
\end{equation*}
а поэтому из предположения индукции:
\begin{equation*}
\alpha_{1}P_{1}+\ldots+\alpha_{n}P_{n}\ls(\sum_{i=1}^{n}\alpha_{i}-(n-1)\ve) P_{H_{1}+\ldots+H_{n}}.
\end{equation*}
Поэтому
\begin{equation*}
(\sum_{i=1}^{n}\alpha_{i}-(n-1)\ve) P_{H_{1}+\ldots+H_{n}}+\alpha_{n+1}P_{n+1}\gs\ve I.
\end{equation*}
Из свойств спектра линейной комбинации двух ортопроекторов:
\begin{equation*}
(\sum_{i=1}^{n}\alpha_{i}-(n-1)\ve) P_{H_{1}+\ldots+H_{n}}+\alpha_{n+1}P_{n+1}
\ls(\sum_{i=1}^{n+1}\alpha_{i}-n\ve)I,
\end{equation*}
а поэтому
\begin{equation*}
\alpha_{1}P_{1}+\ldots+\alpha_{n+1}P_{n+1}\ls(\sum_{i=1}^{n+1}\alpha_{i}-n\ve)I,
\end{equation*}
что и требовалось доказать.
\end{proof}

\begin{conclusion}
Пусть
$H_{1},\ldots,H_{n}$ "---
ненулевые подпространства
$H$,
$\alpha_{1},\ldots,\alpha_{n},\ve$ "--- положительные числа.
Предположим, что
$\alpha_{1}P_{1}+\ldots+\alpha_{n}P_{n}\gs\ve I$, и для некоторых
$t\ls n-1$ индексов $i~H_{i}\bigcap\ol{\sum_{j\neq i}H_{j}}=0$.
Тогда
\begin{equation*}
\alpha_{1}P_{1}+\ldots+\alpha_{n}P_{n}\ls (\sum_{i=1}^{n}\alpha_{i}-t\ve)I.
\end{equation*}
\end{conclusion}

\begin{proof}Можно считать, что
индексами из условия являются
$1,\ldots,t$.

Подпространства
$H_{1},\ldots,H_{t},\ol{H_{t+1}+\ldots+H_{n}}$ линейно независимы и
\begin{equation*}
\alpha_{1}P_{1}+\ldots+\alpha_{t}P_{t}+(\sum_{i=t+1}^{n}\alpha_{i})P_{\ol{H_{t+1}+\ldots+H_{n}}}
\gs\ve I.
\end{equation*}
Поэтому
\begin{equation*}
\alpha_{1}P_{1}+\ldots+\alpha_{t}P_{t}+(\sum_{i=t+1}^{n}\alpha_{i})P_{\ol{H_{t+1}+\ldots+H_{n}}}
\ls(\sum_{i=1}^{n}\alpha_{i}-t\ve)I,
\end{equation*}
откуда следует нужное.
\end{proof}

\subsection{<<Сведение>> системы подпространств $H_{1},\ldots,H_{n}$
к линейно независимой системе подпространств с сохранением суммы подпространств}

Пусть $H_{1},H_{2}$ "--- подпространства гильбертова пространства $H$.
Определим
$M_{1}=H_{1}$ и $M_{2}=H_{2}\ominus(H_{1}\cap H_{2})$.
Тогда подпространства $M_{1},M_{2}$
линейно независимы и сумма
$M_{1}+M_{2}=H_{1}+H_{2}$.
Таким образом, имея пару подпространств, мы
можем их <<уменьшить>> до линейно независимых, не меняя
суммы. В этом разделе мы докажем аналогичное утверждение
для произвольной $n$-ки подпространств.

\begin{theorem}\label{T:sumclotolinind}
Для каждого $n\gs 2$ существует постоянная
$c_{n}~(0<c_{n}<1)$ с таким свойством:
если $H_{1},\ldots,H_{n}$ "--- подпространства
$H$ и $0<\ve<1$ такое, что $P_{H_{1}}+\ldots+P_{H_{n}}\gs\ve I$,
то существуют подпространства
$M_{2},\ldots,M_{n}$ такие, что:
\begin{enumerate}
\item
$M_{k}\subset H_{k}$ для всех $2\ls k\ls n$,
\item подпространства
$H_{1},M_{2},\ldots,M_{n}$ линейно независимы,
\item
$P_{H_{1}}+P_{M_{2}}+\ve P_{M_{3}}+\ldots+\ve^{n-2}P_{M_{n}}\geq c_{n}\ve^{n-1}I$.
\end{enumerate}
\end{theorem}

Для доказательства теоремы \ref{T:sumclotolinind}
нам нужна следующая лемма.

\begin{lemma}\label{L:sumtwotoclosed}
Пусть
$H_{1},H_{2}$ "--- подпространства
$H$. Для произвольного
$0<\ve<1$ существует подпространство
$M_{2}\subset H_{2}$, такое, что:
\begin{enumerate}
\item
$H_{1}+M_{2}$ "--- подпространство
\item
$3(P_{H_{1}}+P_{M_{2}})+\ve I\gs P_{H_{1}}+P_{H_{2}}$
\item
$P_{H_{1}}+P_{M_{2}}\gs\dfrac{\ve}{4}P_{H_{1}+M_{2}}$.
\end{enumerate}
\end{lemma}

\begin{proof}
Достаточно доказать утверждение
леммы для случая, когда
$H=K\oplus K$ для некоторого гильбертова пространства
$K$, а ортопроекторы
\begin{equation*}
P_{H_{1}}=\begin{pmatrix}
a&\sqrt{a(I-a)}\\
\sqrt{a(I-a)}&I-a
\end{pmatrix},\,
P_{H_{2}}=\begin{pmatrix}
I&0\\
0&0
\end{pmatrix}
\end{equation*}
для некоторого самосопряженного оператора
$a:K\to K$,
$0\ls a=a^{*}\ls I,\,\ker(a)=\ker(I-a)=0$
(см. подраздел \ref{SS:spectheorem}).

Тогда
$H_{1}=\{(\sqrt{a}x,\sqrt{I-a}x),\,x\in K\}$,
$H_{2}=\{(x,0),\,x\in K\}$.
Рассмотрим спектральное представление
$a=\int_{[0,1)}x\,dE(x)$. Для числа
$\delta<1$, которое мы определим позже,
определим подпространство
$K_{1}=E([0,\delta))K$. Пусть
$q=E([0,\delta))$. Определим подпространство
$M_{2}=\{(x,0),\,x\in K_{1}\}$.
Поскольку
$|(\sqrt{a}x,x_{1})|\ls\sqrt{\delta}\|x\|\|x_{1}\|,\,x\in K,\,x_{1}\in K_{1},$
то
$H_{1}+M_{2}$ "--- подпространство, причём
$P_{H_{1}}+P_{M_{2}}\gs(1-\sqrt{\delta})P_{H_{1}+M_{2}}\gs\dfrac{1-\delta}{2}P_{H_{1}+M_{2}}$.

Поскольку
$P_{M_{2}}=\begin{pmatrix}
q&0\\
0&0
\end{pmatrix}$, то неравенство
$3(P_{H_{1}}+P_{M_{2}})+\ve I\gs P_{H_{1}}+P_{H_{2}}$ равносильно неотрицательности оператора
\begin{equation*}
A=\begin{pmatrix}
3q+2a+(\ve-1)I&2\sqrt{a(I-a)}\\
2\sqrt{a(I-a)}&2(I-a)+\ve I
\end{pmatrix}.
\end{equation*}
Точка
$\lambda\in \sigma(A)$ тогда и только тогда, когда
операторный определитель
$\det(\lambda I-A)$ не обратим. Операторный определитель
имеет вид:
\begin{align*}
&\lambda^{2}I-(3q+(2\ve+1)I)\lambda+(3q+2a+(\ve-1)I)(2(I-a)+\ve I)-4a(I-a)=\\
&\int_{[0,1)}\left(\lambda^{2}-(3\cdot I_{[0,\delta)}+2\ve+1)\lambda+
(3\cdot I_{[0,\delta)}+2x+\ve-1)(2(1-x)+\ve)-4x(1-x)\right)\,dE(x).
\end{align*}

Для доказательства того, что при
$\lambda<0$~ $\det(\lambda I-A)$ обратим
достаточно доказать, что при
$x\in[0,1)$ справедливо неравенство
\begin{equation*}
(3\cdot I_{[0,\delta)}+2x+\ve-1)(2(1-x)+\ve)\gs 4x(1-x).
\end{equation*}

При
$x\in [0,\delta)$ это неравенство имеет вид
$(2x+\ve+2)(2(1-x)+\ve)\gs 4x(1-x)$ и есть очевидным.

При
$x\in [\delta,1)$ рассматриваемое неравенство имеет вид
$(2x-(1-\ve))(2(1-x)+\ve)\gs 4x(1-x)$, т.е.
$x\gs 1-\dfrac{\ve}{2}-\dfrac{\ve^{2}}{2}.$
Поэтому выбрав
$\delta=1-\dfrac{\ve}{2}$ будем иметь нужное.
Осталось заметить, что при таком $\delta$ выполнено:
$P_{H_{1}}+P_{M_{2}}\gs\dfrac{\ve}{4}P_{H_{1}+M_{2}}$.
\end{proof}

\begin{proof}[Доказательство теоремы ~\ref{T:sumclotolinind}]

Доказываем нужное утверждение индукцией по $n$.
При $n=2$ подойдёт $c_{2}=\frac{1}{2}$. Пусть имеем
$n\geq 3$ подпространств $H_{1},\ldots,H_{n}$ таких, что
$P_{H_{1}}+\ldots+P_{H_{n}}\gs\ve I$.
Пусть
$H_{2}^{\prime}=H_{2}\ominus (H_{1}\cap H_{2})$.
Тогда $P_{H_{1}}+P_{H_{2}^{\prime}}+\ldots+P_{H_{n}}\gs\dfrac{\ve}{2}I$.

Из леммы~\ref{L:sumtwotoclosed}
следует, что существует подпространство
$M_{2}\subset H_{2}^{\prime}$, такое, что
\begin{enumerate}
\item
$H_{1}+M_{2}$ "--- подпространство,
\item
$3(P_{H_{1}}+P_{M_{2}})+\dfrac{\ve}{4}I\gs P_{H_{1}}+P_{H_{2}^{\prime}}$,
\item
$P_{H_{1}}+P_{M_{2}}\gs\dfrac{\ve}{16}P_{H_{1}+M_{2}}$.
\end{enumerate}
Тогда
\begin{equation*}
P_{H_{1}}+P_{M_{2}}+P_{H_{3}}+\ldots+P_{H_{n}}\gs\dfrac{\ve}{12}I,
\end{equation*}
а потому
\begin{equation*}
P_{H_{1}+M_{2}}+P_{H_{3}}+\ldots+P_{H_{n}}\gs\dfrac{\ve}{24}I.
\end{equation*}
Используем предположение индукции для набора подпространств
$(H_{1}+M_{2}),H_{3},\ldots,H_{n}$. Получим:
существуют подпространства
$M_{k}\subset H_{k},\,3\ls k\ls n$ такие, что
$(H_{1}+M_{2}),M_{3},\ldots,M_{n}$ линейно независимы и
\begin{equation*}
P_{H_{1}+M_{2}}+\sum_{k=3}^{n}(\dfrac{\ve}{24})^{k-3}P_{M_{k}}\gs c_{n-1}\left(\dfrac{\ve}{24}\right)^{n-2}I.
\end{equation*}
Домножая обе части на $\ve/16$, получим
\begin{equation*}
P_{H_{1}}+P_{M_{2}}+\sum_{k=3}^{n}\dfrac{\ve^{k-2}}{16\cdot24^{k-3}}P_{M_{k}}\gs c_{n-1}\left(\dfrac{\ve}{24}\right)^{n-2}\cdot\frac{\ve}{16}I.
\end{equation*}
Поэтому можно положить
$c_{n}=\dfrac{c_{n-1}}{16\cdot24^{n-2}}.$
\end{proof}

\begin{conclusion}\label{C:sumclotolinind}
Пусть
$H_{1},\ldots,H_{n}$ "--- подпространства
$H$, сумма которых
$H_{1}+\ldots+H_{n}=H$. Тогда существуют подпространства
$M_{k},\,2\ls k\ls n$ такие, что:
\begin{enumerate}
\item
$M_{k}\subset H_{k}$ для всех $2\ls k\ls n$,
\item
$H_{1},M_{2},\ldots,M_{n}$ линейно независимы,
\item
$H_{1}+M_{2}+\ldots+M_{n}=H.$
\end{enumerate}
\end{conclusion}

\begin{conclusion}
Пусть
$H_{1},\ldots,H_{n}$ "--- подпространства
$H$, причём
$H_{1}+\ldots+H_{n}=H$. Пусть
$\Delta$ "--- подпространство в
$H$. Существуют линейно независимые подпространства
$M_{j},\,1\ls j\ls n$, такие, что:
\begin{enumerate}
\item
$M_{j}\subset H_{j}$ для всех $1\ls j\ls n$
\item
$M_{1}+\ldots+M_{n}$ "--- подпространство
\item
$M_{1}+\ldots+M_{n}+\Delta=H$
\item
$(M_{1}+\ldots+M_{n})\bigcap\Delta=0$
\end{enumerate}
\end{conclusion}

\begin{proof}
Из следствия \ref{C:sumclotolinind} для набора подпространств
$\Delta,H_{1},\ldots,H_{n}$ получим нужное.
\end{proof}

Следующее утверждение показывает, что в следствии \ref{C:sumclotolinind}
подпространства можно заменить на образы линейных непрерывных операторов.

\begin{conclusion}
Пусть $K_{1},\ldots,K_{n}$ "--- гильбертовы пространства,
$a_{i}:K_{i}\to H,\,1\ls i\ls n$ "--- линейные непрерывные операторы, причём
$\tmd{Im}(a_{1})+\ldots+\tmd{Im}(a_{n})=H$. Тогда существуют
линейно независимые подпространства
$M_{j},\,1\ls j\ls n$, такие, что
$M_{j}\subset \tmd{Im}(a_{j}),\,1\ls j\ls n$
и сумма
$M_{1}+\ldots+M_{n}=H$.
\end{conclusion}
\begin{proof}
$\tmd{Im}(a_{1})+(\sum_{i\neq 1}\tmd{Im}(a_{i}))=H$.
Из теоремы 2.4 работы
\cite{Fillmore} (см. также раздел \ref{S:Imagesofoperators})
следует, что существует подпространство
$H_{1}\subset\tmd{Im}(a_{1})$, такое, что
$H_{1}+\tmd{Im}(a_{2})+\ldots+\tmd{Im}(a_{n})=H$.
Аналогично рассуждая, найдём набор подпространств
$H_{2}\subset\tmd{Im}(a_{2}),\ldots,H_{n}\subset\tmd{Im}(a_{n})$, таких, что
$H_{1}+\ldots+H_{n}=H$. Осталось воспользоваться следствием
\ref{C:sumclotolinind}.
\end{proof}

Теперь мы докажем, что произвольную $n$-ку подпространств
можно <<уменьшить>> до линейно независимой $n$-ки с сохранением суммы подпространств. Введём необходимые обозначения.
Для подпространств
$H_{1},\ldots,H_{n}$ определим гильбертово пространство
$\wt{H}=H_{1}\oplus\ldots\oplus H_{n}$, и в нём подпространства
$\Delta_{0}=\{(x_{1},\ldots,x_{n}),\,\sum_{k=1}^{n}x_{k}=0\}$
и
$\wt{H}_{k}=\{(0,\ldots,0,\underbrace{x}_{k},0\ldots,0),\,x\in H_{k}\},\,1\ls k\ls n$.

\begin{theorem}\label{T:umenshenie1}
Пусть
$H_{1},\ldots,H_{n}$ "--- подпространства
$H$. Тогда существуют подпространства
$M_{j}\subset H_{j},\,2\ls j\ls n$ такие, что:
\begin{enumerate}
\item
$H_{1},M_{2},\ldots,M_{n}$ линейно независимы
\item
$H_{1}+M_{2}+\ldots+M_{n}=H_{1}+\ldots+H_{n}.$
\end{enumerate}
\end{theorem}

\begin{remark} Эта теорема означает следующее:
можно не <<уменьшать>> $H_{1}$, <<уменьшить>> подпространства
$H_{2},\ldots,H_{n}$ так, что сумма не изменится, но
<<уменьшенные>> подпространства уже линейно независимы.
\end{remark}

\begin{proof}
Заметим, что
$\Delta_{0}\cap\wt{H}_{1}=0$ и
$\Delta_{0}+\wt{H}_{1}=\{(x_{1},\ldots,x_{n}),\sum_{i=1}^{n}x_{i}\in H_{1}\}$
замкнуто в $\wt{H}$.
Используя следствие \ref{C:sumclotolinind} для набора подпространств
$\Delta_{0}+\wt{H}_{1},\wt{H}_{2},\ldots,\wt{H}_{n}$
получим, что существуют подпространства
$M_{k}\subset H_{k},\,2\ls k\ls n$, такие, что
\begin{equation*}
(H_{1}\oplus M_{2}\oplus\ldots\oplus M_{n})\cap\Delta_{0}=0,\,
(H_{1}\oplus M_{2}\oplus\ldots\oplus M_{n})+\Delta_{0}=\wt{H}.
\end{equation*}
Тогда
$H_{1},M_{2},\ldots,M_{n}$ "--- линейно независимы и
$H_{1}+M_{2}+\ldots+M_{n}=H_{1}+\ldots+H_{n}.$
\end{proof}

Теперь естественно возникает следующий вопрос:
пусть для некоторого
$m<n$ подпространства
$H_{1},\ldots,H_{m}$ линейно независимы. Можно ли
<<уменьшить>> только
$H_{m+1},\ldots,H_{n}$, чтобы
сумма не изменилась, но
<<уменьшенные>> пространства были линейно независимы.
В связи с этим вопросом дадим следующее определение.
(Систему подпространств $H_{1},\ldots,H_{n}$ пространства $H$ будем обозначать
$S=(H;H_{1},\ldots,H_{n})$.)

\begin{definition}\label{DEF:opredelenie1}
Для
$m\ls n$ определим
$\textbf{RPS}(H,m,n)$ (reduction preserving sum) как множество $n$-ок
подпространств $S=(H;H_{1},\ldots,H_{n})$, для которых существуют
подпространства
$M_{j}\subset H_{j},\,m+1\ls j\ls n$, такие, что
\begin{enumerate}
\item
$H_{1},\ldots,H_{m},M_{m+1},\ldots,M_{n}$ линейно независимы
\item
$H_{1}+\ldots+H_{m}+M_{m+1}+\ldots+M_{n}=H_{1}+\ldots+H_{n}$
\end{enumerate}
\end{definition}

\begin{remark}
Если $S\in\textbf{RPS}(H,m,n)$, то $H_{1},\ldots,H_{m}$
линейно независимы.
\end{remark}

Мы изучим некоторые свойства классов
$\textbf{RPS}(H,m,n)$. Из теоремы \ref{T:umenshenie1}
следует, что
$\textbf{RPS}(H,1,n)$ совпадает с множеством
$n$-ок подпространств $H$,
кроме того, очевидно
$\textbf{RPS}(H,n,n)$
совпадает с множеством $n$-ок подпространств, для которых
$H_{1},\ldots,H_{n}$ линейно независимы. Далее
$m<n$.

Используя следствие
\ref{C:sumlinind} и теорему \ref{T:umenshenie1}, легко получить
следующее утверждение.

\begin{statement}
Пусть
$H_{1}+\ldots+H_{n}$ "--- подпространство.
$S\in\textbf{RPS}(H,m,n)$ тогда и только тогда, когда
$H_{1},\ldots,H_{m}$ линейно независимы и
$H_{1}+\ldots+H_{m}$ "--- подпространство.
\end{statement}

\begin{statement}\label{ST:RPS}
Утверждения эквивалентны:
\begin{enumerate}
\item
$S\in\textbf{RPS}(H,m,n)$,
\item
$H_{1},\ldots,H_{m}$ линейно независимы и
$\Delta_{0}+\wt{H}_{1}+\ldots+\wt{H}_{m}$ "--- подпространство в
$\wt{H}$,
\item
существует
$\ve>0$, такое, что для произвольных
$y_{j}\in H_{j},\,1\ls j\ls n,\,\sum_{j=1}^{n}y_{j}=0$
справедливо:
\begin{equation*}
\|y_{m+1}\|+\ldots+\|y_{n}\|\gs\ve(\|y_{1}\|+\ldots+\|y_{m}\|).
\end{equation*}
\end{enumerate}
\end{statement}
\begin{proof}
$(1)\Rightarrow (2)$.
Пусть
$S\in\textbf{RPS}(H,m,n)$.
Для соответствующих подпространств
$M_{j},\,m+1\ls j\ls n$ (см. определение \ref{DEF:opredelenie1})
обозначим
\begin{equation*}
\wt{M}_{j}=\{(0,\ldots,0,\underbrace{x}_{j},0,\ldots,0),\,x\in M_{j}\}.
\end{equation*}
Тогда подпространства
$\Delta_{0},\wt{H}_{1},\ldots,\wt{H}_{m},\wt{M}_{m+1},\ldots,\wt{M}_{n}$
линейно независимы и их сумма равна
$\wt{H}$. Из следствия
\ref{C:sumlinind} следует, что
$\Delta_{0}+\wt{H}_{1}+\ldots+\wt{H}_{m}$ "--- подпространство в
$\wt{H}$.

$(2)\Rightarrow (1)$.
Пусть
$H_{1},\ldots,H_{m}$ линейно независимы и
$\Delta_{0}+\wt{H}_{1}+\ldots+\wt{H}_{m}$ замкнуто в $\wt{H}$.
Применив теорему \ref{T:umenshenie1} к набору подпространств
$(\Delta_{0}+\wt{H}_{1}+\ldots+\wt{H}_{m}),\wt{H}_{m+1},\ldots,\wt{H}_{n}$, получим:
существуют подпространства
$M_{j}\subset H_{j},\,m+1\ls j\ls n$, для которых
\begin{equation*}
(H_{1}\oplus\ldots\oplus H_{m}\oplus M_{m+1}\oplus\ldots\oplus M_{n})\cap\Delta_{0}=0
\end{equation*}
и
\begin{equation*}
(H_{1}\oplus\ldots\oplus H_{m}\oplus M_{m+1}\oplus\ldots\oplus M_{n})+\Delta_{0}=\wt{H}.
\end{equation*}
Эти
$M_{j},\,m+1\ls j\ls n$ "--- требуемые.

$(2)\Leftrightarrow(3)$.
$H_{1},\ldots,H_{m}$ линейно независимы и сумма
$\Delta_{0}+\wt{H}_{1}+\ldots+\wt{H}_{m}$ "--- подпространство
тогда и только тогда, когда
$\Delta_{0}\cap(\sum_{i=1}^{m}\wt{H}_{i})=0$ и
$\Delta_{0}+(\sum_{i=1}^{m}\wt{H}_{i})$ замкнуто в
$\wt{H}$. В силу утверждения
\ref{ST:pair3}, пункт (5), последнее равносильно существованию такого
$\ve_{1}>0$, что для произвольного
$(y_{1},\ldots,y_{n})\in\Delta_{0}$
выполнено (нам удобнее перейти к эквивалентной норме):
\begin{equation*}
\sum_{j=m+1}^{n}\|y_{j}\|\gs\ve_{1}\sum_{i=1}^{n}\|y_{i}\|.
\end{equation*}
Отсюда следует, что
$(2)\Leftrightarrow(3)$.
\end{proof}

Отметим, что
\begin{equation*}
\Delta_{0}+\wt{H}_{1}+\ldots+\wt{H}_{m}=\{(x_{1},\ldots,x_{n})\in\wt{H},\,\sum_{j=m+1}^{n}x_{j}
\in H_{1}+\ldots+H_{m}\}.
\end{equation*}
Теперь из предыдущего утверждения получаем такие следствия.

\begin{conclusion}
Если
$H_{1},\ldots,H_{m}$ линейно независимы и
$(H_{1}+\ldots+H_{m})\cap(H_{m+1}+\ldots+H_{n})$ замкнуто в
$H_{m+1}+\ldots+H_{n}$, то
$S\in\textbf{RPS}(H,m,n)$.
\end{conclusion}

\begin{conclusion}
$S\in\textbf{RPS}(H,n-1,n)$ тогда и только тогда, когда
$H_{1},\ldots,H_{n-1}$ линейно независимы и
$H_{n}\cap(H_{1}+\ldots+H_{n-1})$ "--- подпространство.
\end{conclusion}

\begin{conclusion}
Пусть для двух систем подпространств
$S=(H;H_{1},\ldots,H_{n})$,
$S'=(H;H_{1}',\ldots,H_{n}')$
$H_{k}\subset H_{k}',\,1\ls k\ls n$.
Если
$S'\in\textbf{RPS}(H,m,n)$, то и
$S\in\textbf{RPS}(H,m,n)$.
\end{conclusion}

\begin{conclusion}
Если
$S\in\textbf{RPS}(H,m,n)$, то для всякого
$l\ls m$ и $1\ls r\ls n-m$
$S'=(H;H_{1},\ldots,H_{l},H_{m+1},\ldots,H_{m+r})\in\textbf{RPS}(H,l,l+r)$
\end{conclusion}

\begin{conclusion}
Если
$S\in\textbf{RPS}(H,m,n)$, то для всякого
$m+1\ls j\ls n$
$H_{j}\cap(H_{1}+\ldots+H_{m})$ "--- подпространство.
\end{conclusion}

\begin{statement}
Пусть множество
$\{m+1,\ldots,n\}$ разбито на подмножества
$I_{k},\,1\ls k\ls r$. Тогда:
\begin{enumerate}
\item
Если
$S'=(H;H_{1},\ldots,H_{m},\ol{\sum_{j\in I_{1}}H_{j}},\ldots,\ol{\sum_{j\in I_{r}}H_{j}})\in\textbf{RPS}(H,m,m+r)$, то
$S\in\textbf{RPS}(H,m,n)$,
\item
Пусть для всех
$1\ls k\ls r$
$\sum_{j\in I_{k}}H_{j}$ "--- подпространство.
Если
$S\in\textbf{RPS}(H,m,n)$ то
$S'=(H;H_{1},\ldots,H_{m},\sum_{j\in I_{1}}H_{j},\ldots,\sum_{j\in I_{r}}H_{j})\in\textbf{RPS}(H,m,m+r)$.
\end{enumerate}
\end{statement}
\begin{proof}
Докажем (1).
Для доказательства нужного утверждения используем утверждение \ref{ST:RPS}.
Пусть $y_{j}\in H_{j},\,1\ls j\ls n,\,\sum_{j=1}^{n}y_{j}=0$.
Для всех
$1\ls k\ls r$
$\sum_{j\in I_{k}}\|y_{j}\|\gs\|\sum_{j\in I_{k}}y_{j}\|$.
Отсюда легко получим, что
$S\in\textbf{RPS}(H,m,n)$.

Докажем (2). Пусть для всех $1\ls k\ls r$
сумма $\sum_{j\in I_{k}}H_{j}$ "--- подпространство,
$S\in\textbf{RPS}(H,m,n)$.
Для всех
$1\ls k\ls r$ существуют подпространства
$K_{j}\subset H_{j},\,j\in I_{k}$, такие, что
$K_{j},\,j\in I_{k}$ линейно независимы и сумма
$\sum_{j\in I_{k}}K_{j}=\sum_{j\in I_{k}}H_{j}$.
Тогда $n$-ка
\begin{equation*}
S''=(H;H_{1},\ldots,H_{m},K_{m+1},\ldots,K_{n})\in\textbf{RPS}(H,m,n).
\end{equation*}
Для всякого
$1\ls k\ls r$ существует
$\ve_{k}>0$, такое, что для произвольных
$y_{j}\in K_{j},\,j\in I_{k}$ справедливо:
$\|\sum_{j\in I_{k}}y_{j}\|\gs\ve_{k}(\sum_{j\in I_{k}}\|y_{j}\|)$.
Используя утверждение \ref{ST:RPS}, легко
получить, что
$(H;H_{1},\ldots,H_{m},\sum_{j\in I_{1}}K_{j},\ldots,\sum_{j\in I_{r}}K_{j})\in\textbf{RPS}(H,m,m+r)$,
что и требовалось доказать.
\end{proof}

\begin{conclusion}
Пусть
$H_{m+1}+\ldots+H_{n}$ "--- подпространство.
$S\in\textbf{RPS}(H,m,n)$ тогда и только тогда, когда
$H_{1},\ldots,H_{m}$ линейно независимы и
$(H_{1}+\ldots+H_{m})\cap(H_{m+1}+\ldots+H_{n})$ "--- подпространство.
\end{conclusion}

\section{Замкнутость суммы образов операторов}\label{S:Imagesofoperators}

В этом разделе мы рассмотрим более общий объект, чем систему подпространств "---
систему образов линейных непрерывных операторов. Отметим, что изучение таких систем не сводится
к изучению системы линеалов в $H$, поскольку в бесконечномерном гильбертовом пространстве
есть линеалы, отличные от образов линейных непрерывных операторов (см. \cite{Fillmore}).

Рассмотрение системы подпространств
$H_{i},\,1\ls i\ls n$ как системы
$H_{i}=\tmd{Im}(P_{H_{i}}),\,1\ls i\ls n$ позволит нам получить критерии замкнутости суммы подпространств,
а также изучить некоторые свойства сумм подпространств
$H$.

\subsection{Теорема Р. Дугласа и её следствия}\label{SS:Douglas}

\begin{theorem}(\cite{Douglas})
Пусть
$H,H_{1},H_{2}$ "--- гильбертовы пространства,
$A:H_{1}\to H$,
$B:H_{2}\to H$ "--- линейные непрерывные операторы.
Следующие условия эквивалентны:
\begin{enumerate}
\item $\tmd{Im}(A)\subset \tmd{Im}(B)$
\item Существует $\lambda>0$ такое, что $AA^{*}\ls\lambda BB^{*}$
\item Существует линейный непрерывный оператор $C:H_{1}\to H_{2}$ такой, что $A=BC$.
\end{enumerate}
Кроме того, при выполнении (1) оператор $C$ может быть выбран так, что
$\ker C=\ker A$, $\tmd{Im}(C)\subset(\ker B)^{\bot}$.
\end{theorem}

\begin{conclusion}
Пусть
$A:H_{1}\ra H$ "--- линейный непрерывный оператор.
Тогда
$\tmd{Im}(A)=\tmd{Im}(\sqrt{AA^{*}})$.
\end{conclusion}

\begin{conclusion}
Пусть
$H,H_{1},\ldots,H_{n}$ "--- гильбертовы пространства,
$a_{k}:H_{k}\ra H$ "--- линейные непрерывные операторы. Тогда
$\sum_{k=1}^{n}\tmd{Im}(a_{k})=\tmd{Im}(\sqrt{\sum_{k=1}^{n}a_{k}a_{k}^{*}})$.
\begin{proof}
Определим оператор
$A:H_{1}\oplus\ldots\oplus H_{n}\ra H$
равенством
\begin{equation*}
A(x_{1},\ldots,x_{n})=a_{1}x_{1}+\ldots+a_{n}x_{n}.
\end{equation*}
Блочная запись
$A$ имеет вид $(a_{1},\ldots,a_{n})$, поэтому
$A^{*}=(a_{1}^{*},\ldots,a_{n}^{*})^{\top}$.
Отсюда
$AA^{*}=\sum_{k=1}^{n}a_{k}a_{k}^{*}$.
Осталось заметить, что
$\sum_{k=1}^{n}\tmd{Im}(a_{k})=\tmd{Im}(A)=\tmd{Im}(\sqrt{AA^{*}})$.
\end{proof}
\end{conclusion}

\begin{conclusion}
Пусть
$H_{1},\ldots,H_{n}$ "--- подпространства
$H$,
$P_{1},\ldots,P_{n}$ "--- соответствующие ортопроекторы.
Тогда сумма
$H_{1}+\ldots+H_{n}=\tmd{Im}(\sqrt{P_{1}+\ldots+P_{n}})$.
Отсюда следует, что сумма $H_{1}+\ldots+H_{n}$ замкнута тогда и только тогда,
когда
$\sigma(P_{1}+\ldots+P_{n})\cap(0,\ve)=\varnothing$ для некоторого
$\varepsilon>0$.
\end{conclusion}

\begin{statement}
Пусть
$a_{k}:H\ra H,\,1\ls k\ls n$ "--- неотрицательные
самосопряженные операторы в $H$.
Следующие условия равносильны:
\begin{enumerate}
\item
$\tmd{Im}(a_{1})+\ldots+\tmd{Im}(a_{n})$ замкнуто,
\item
для некоторого $\ve>0$ $\sigma(a_{1}+\ldots+a_{n})\cap(0,\ve)=\varnothing.$
\end{enumerate}
При выполнении этих условий (одного из этих условий)
$\sum_{k=1}^{n}\tmd{Im}(a_{k})=\tmd{Im}(\sum_{k=1}^{n}a_{k})$.
\end{statement}
\begin{proof}
Определим подпространства
$H_{1}=\ol{\sum_{k=1}^{n}\tmd{Im}(a_{k})}$,
$H_{2}=\bigcap_{k=1}^{n}\ker(a_{k})$. Тогда
$H=H_{1}\oplus H_{2}$.
Относительно этого ортогонального разложения пространства $H$ оператор $a_{i}=b_{i}\oplus 0$,
где $b_{i}:H_{1}\to H_{1}$ "--- неотрицательный самосопряженный оператор. Ясно, что
$\bigcap_{k=1}^{n}\ker b_{k}=0$.

\textbf{1.}
Пусть
$\sum_{k=1}^{n}\tmd{Im}(a_{k})$ замкнуто.
Тогда
$\sum_{k=1}^{n}\tmd{Im}(b_{k})$ замкнуто. Поскольку
$\bigcap_{k=1}^{n}\ker(b_{k})=0$, то
$\sum_{k=1}^{n}\tmd{Im}(b_{k})$ плотно в
$H_{1}$. Поэтому
$\sum_{k=1}^{n}\tmd{Im}(b_{k})=H_{1}$, т.е.
$\tmd{Im}\sqrt{b_{1}^{2}+\ldots+b_n^2}=H_{1}$.
Тогда $b_{1}^{2}+\ldots+b_{n}^{2}\gs\ve_{1}I_{H_{1}}$ для некоторого $\ve_{1}>0$.
Поскольку
$\|b_{i}\|b_{i}\gs b_{i}^{2}$, то
$b_{1}+\ldots+b_{n}\gs\ve_{2}I_{H_{1}}$ для некоторого $\varepsilon_{2}>0$.
Поэтому
$\sigma(\sum_{k=1}^{n}a_{k})\cap(0,\ve_{2})=\varnothing$ и
$\tmd{Im}(a_{1}+\ldots+a_{n})=H_{1}$, что и требовалось доказать.

\textbf{2.}
Пусть
$\sigma(a_{1}+\ldots+a_{n})\cap(0,\ve)=\varnothing$ для некоторого $\ve>0$.
Тогда
$\sigma(b_{1}+\ldots+b_{n})\cap(0,\ve)=\varnothing$. Поскольку
$\ker(b_{1}+\ldots+b_{n})=0$, то
$b_{1}+\ldots+b_{n}\gs\ve I_{H_{1}}$. Следовательно,
$\tmd{Im}(a_{1}+\ldots+a_{n})=H_{1}=\sum_{k=1}^{n}\tmd{Im}(a_{k})$.
\end{proof}

\begin{example}
Пусть
$a_{1},\ldots,a_{n}$ "--- неотрицательные
операторы в
$H$, причём для произвольного
$1\ls k\ls n$
$\tmd{Im}(a_{k})$ "--- подпространство. Определим оператор
$C=a_{1}+\ldots+a_{n}$. Предположим,
$Ca_{k}=a_{k}C,\, 1\ls k\ls n$.
Тогда
$\tmd{Im}(a_{1})+\ldots+\tmd{Im}(a_{n})$ "---
подпространство.
\begin{proof}
Существует
$\ve>0$ такое, что
$\sigma(a_{k})\bigcap(0,\ve)=\varnothing$
для всех $1\leqslant k\leqslant n$.
Поскольку
$C\gs a_{k}$ и $Ca_{k}=a_{k}C$ то
$Ca_{k}\gs a_{k}^{2}\gs\ve a_{k},\, 1\ls k\ls n$.
Прибавив полученые неравенства, получим
$C^{2}\gs\ve C$, откуда
$\sigma(C)\bigcap(0,\ve)=\varnothing$.
\end{proof}
\end{example}

\subsection{Различные критерии замкнутости $\sum_{k=1}^{n}\tmd{Im}(T_{i})$}

В этом разделе
$T_{1},\ldots,T_{n}$ "--- неотрицательные операторы в $H$, причём
$\|T_{k}\|<2,\,1\ls k\ls n$. Определим $E=(I-T_{n})\ldots(I-T_{1})$.
Лемма
\ref{L:lemma4} следует из лемм 3.3, 3.10 работы \cite{Holst}.

\begin{lemma}\label{L:lemma4}
Пусть $\|T_{k}\|\ls\omega<2,\,1\ls k\ls n$. Тогда для всякого
$x\in H$
\begin{equation*}
\dfrac{2+\omega^{2}n(n-1)}{2-\omega}(\|x\|^{2}-\|Ex\|^{2})\gs\sum_{k=1}^{n}(T_{k}x,x).
\end{equation*}
\end{lemma}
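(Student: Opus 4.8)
The plan is to run the familiar telescoping/energy estimate for the product $E=(I-T_{n})\cdots(I-T_{1})$ of the self-adjoint factors $I-T_{k}$, being careful to track the constant. Put $y_{0}=x$ and $y_{k}=(I-T_{k})y_{k-1}$ for $1\ls k\ls n$, so that $y_{n}=Ex$. From $y_{j-1}-y_{j}=T_{j}y_{j-1}$ one obtains, by telescoping, the identity
$$x-y_{k-1}=\sum_{j=1}^{k-1}T_{j}y_{j-1},$$
which will be used in the second half of the argument. For the left-hand side, since $T_{k}=T_{k}^{*}$,
$$\|y_{k-1}\|^{2}-\|y_{k}\|^{2}=2(T_{k}y_{k-1},y_{k-1})-\|T_{k}y_{k-1}\|^{2},$$
and summing over $k$ gives $\|x\|^{2}-\|Ex\|^{2}=\sum_{k=1}^{n}\bigl(2(T_{k}y_{k-1},y_{k-1})-\|T_{k}y_{k-1}\|^{2}\bigr)$.

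Next I use the hypothesis in the form $0\ls T_{k}\ls\omega I$: the commuting nonnegative operators $T_{k}$ and $\omega I-T_{k}$ have nonnegative product, so $T_{k}^{2}\ls\omega T_{k}$, whence $\|T_{k}y_{k-1}\|^{2}=(T_{k}^{2}y_{k-1},y_{k-1})\ls\omega(T_{k}y_{k-1},y_{k-1})$. Writing $u_{k}:=\|T_{k}^{1/2}y_{k-1}\|$, so that $u_{k}^{2}=(T_{k}y_{k-1},y_{k-1})$, this yields the lower bound
$$\|x\|^{2}-\|Ex\|^{2}\ \gs\ (2-\omega)\sum_{k=1}^{n}u_{k}^{2},$$
which is in particular nonnegative. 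It therefore suffices to prove that $\sum_{k=1}^{n}(T_{k}x,x)\ls\bigl(2+\omega^{2}n(n-1)\bigr)\sum_{k=1}^{n}u_{k}^{2}$.

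To this end I compare $x$ with $y_{k-1}$ in the seminorm $u\mapsto\|T_{k}^{1/2}u\|$: by the triangle inequality $(T_{k}x,x)^{1/2}=\|T_{k}^{1/2}x\|\ls u_{k}+v_{k}$, where $v_{k}:=\|T_{k}^{1/2}(x-y_{k-1})\|$. Substituting the telescoping identity and using $\|T_{k}^{1/2}\|\ls\omega^{1/2}$ together with $\|T_{j}y_{j-1}\|\ls\|T_{j}^{1/2}\|\,\|T_{j}^{1/2}y_{j-1}\|\ls\omega^{1/2}u_{j}$, one gets $v_{k}\ls\omega\sum_{j<k}u_{j}$. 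Hence, by $(a+b)^{2}\ls2a^{2}+2b^{2}$ and Cauchy--Schwarz,
$$(T_{k}x,x)\ls\Bigl(u_{k}+\omega\sum_{j<k}u_{j}\Bigr)^{2}\ls 2u_{k}^{2}+2\omega^{2}(k-1)\sum_{j<k}u_{j}^{2}.$$
Summing over $k$ and interchanging the order of summation, $\sum_{k=1}^{n}(k-1)\sum_{j<k}u_{j}^{2}\ls\frac{n(n-1)}{2}\sum_{j=1}^{n}u_{j}^{2}$, so that $\sum_{k}(T_{k}x,x)\ls\bigl(2+\omega^{2}n(n-1)\bigr)\sum_{k}u_{k}^{2}$; combining this with the display of the previous paragraph gives exactly the asserted inequality.

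The telescoping and the final double-sum estimate are routine; the two points that need attention are keeping the constant sharp (using $(a+b)^{2}\ls2a^{2}+2b^{2}$ rather than a weaker splitting, and bounding $\sum_{k=j+1}^{n}(k-1)$ by $\frac{n(n-1)}{2}$), and the use of $T_{k}\gs0$, which is precisely what makes both $\|T_{k}y_{k-1}\|^{2}\ls\omega(T_{k}y_{k-1},y_{k-1})$ and the seminorm triangle inequality available. The only genuine idea is the organizational one: to bound the error terms $v_{k}$ by the same quantities $u_{k}$ that already control $\|x\|^{2}-\|Ex\|^{2}$, so that the two halves of the estimate close up.
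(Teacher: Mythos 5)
Your proof is correct: the telescoping identity $\|x\|^{2}-\|Ex\|^{2}=\sum_{k}\bigl(2(T_{k}y_{k-1},y_{k-1})-\|T_{k}y_{k-1}\|^{2}\bigr)$, the operator inequality $T_{k}^{2}\ls\omega T_{k}$ (valid since $0\ls T_{k}\ls\omega I$), and the bookkeeping $v_{k}\ls\omega\sum_{j<k}u_{j}$ followed by $(a+b)^{2}\ls 2a^{2}+2b^{2}$ and Cauchy--Schwarz all check out, and the double sum $\sum_{k}(k-1)\sum_{j<k}u_{j}^{2}\ls\frac{n(n-1)}{2}\sum_{j}u_{j}^{2}$ yields exactly the stated constant $\frac{2+\omega^{2}n(n-1)}{2-\omega}$. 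For comparison: the paper does not prove this lemma at all --- it only remarks that it follows from Lemmas 3.3 and 3.10 of the cited technical report of Holst on algebraic Schwarz theory. Your argument is the standard energy estimate from that theory (telescoping the product $E=(I-T_{n})\cdots(I-T_{1})$ and controlling the perturbations $x-y_{k-1}$ by the same quantities $u_{k}=\|T_{k}^{1/2}y_{k-1}\|$ that bound the energy decrease), so you have in effect supplied the self-contained verification that the paper delegates to its reference, including an explicit check that the constant claimed there is attainable.
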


Следующая лемма следует из леммы \ref{L:lemma4}, но мы приведём непосредственное доказательство.

\begin{lemma}\label{L:lemma5}
Если для некоторого
$x\in H$
$\|Ex\|=\|x\|$, то $T_{k}x=0,\,1\ls k\ls n$.
\end{lemma}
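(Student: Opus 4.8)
The plan is to obtain Lemma~\ref{L:lemma5} as an immediate consequence of the quantitative estimate in Lemma~\ref{L:lemma4}. First I would choose a uniform bound for the norms: since there are only finitely many operators and $\|T_k\|<2$ for each $k$, I set $\omega=\max_{1\ls k\ls n}\|T_k\|$, so that $\|T_k\|\ls\omega<2$ and Lemma~\ref{L:lemma4} is applicable with this value of $\omega$.

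Next I would apply Lemma~\ref{L:lemma4} to the given vector $x$, which by hypothesis satisfies $\|Ex\|=\|x\|$. The right-hand side of the estimate is multiplied (on the left) by the factor $\|x\|^{2}-\|Ex\|^{2}$, which is now zero, so the inequality degenerates to
\[
0\gs\sum_{k=1}^{n}(T_{k}x,x).
\]
Because each $T_{k}$ is nonnegative and self-adjoint, $(T_{k}x,x)\gs 0$ for every $k$, and a sum of nonnegative numbers that is $\ls 0$ forces each summand to vanish; hence $(T_{k}x,x)=0$ for all $1\ls k\ls n$.

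Finally I would invoke the standard fact that a nonnegative self-adjoint operator annihilates any vector on which its quadratic form vanishes: writing $(T_{k}x,x)=\|T_{k}^{1/2}x\|^{2}$ gives $T_{k}^{1/2}x=0$, and therefore $T_{k}x=T_{k}^{1/2}(T_{k}^{1/2}x)=0$. Thus $T_{k}x=0$ for every $k$, which is the assertion.

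I do not expect any genuine obstacle here; the lemma is a direct corollary of Lemma~\ref{L:lemma4}. The only points that deserve an explicit line are the uniform choice of $\omega$ over the finitely many operators and the square-root argument for nonnegative operators, both of which are routine.
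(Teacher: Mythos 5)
Your proof is correct, and it is precisely the route the paper explicitly mentions and then declines to take: right before the proof the author remarks that the lemma follows from Lemma~\ref{L:lemma4} but that an independent proof will be given. Your argument is sound as written: with $\omega=\max_{k}\|T_{k}\|<2$ the constant in Lemma~\ref{L:lemma4} is positive, the hypothesis $\|Ex\|=\|x\|$ kills the left-hand side, nonnegativity of each $T_{k}$ forces $(T_{k}x,x)=0$ termwise, and the square-root argument gives $T_{k}x=0$. The paper instead proves the one-operator case from scratch --- if $\|(I-T)x\|=\|x\|$ then expanding gives $\|Tx\|^{2}=2(Tx,x)$, which combined with $\|Tx\|^{2}\ls\|T\|(Tx,x)$ and $\|T\|<2$ forces $(Tx,x)=0$ --- and then peels the factors of $E=(I-T_{n})\ldots(I-T_{1})$ off one at a time from the right, using $\|Ex\|\ls\|(I-T_{1})x\|\ls\|x\|$ to conclude $T_{1}x=0$, and so on. What each approach buys: yours is shorter, but it rests on Lemma~\ref{L:lemma4}, which the paper states without proof (citing Lemmas 3.3 and 3.10 of \cite{Holst}); the paper's peeling argument is entirely self-contained and uses only the elementary inequality above, which is presumably why the author preferred it.
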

\begin{proof}
Отметим, что для неотрицательного оператора
$T$ с нормой $\|T\|<2$ и для произвольного $x\in H$
$\|(I-T)x\|\ls \|x\|$. Если
$\|(I-T)x\|=\|x\|$,
то
$Tx=0$. Действительно, пусть $\|(I-T)x\|=\|x\|$. Тогда
$\|x-Tx\|^{2}=\|x\|^{2}$, откуда
$\|Tx\|^{2}=2(Tx,x)$. Поскольку
$\|Tx\|^{2}\ls\|T\|(Tx,x)$, то
$(Tx,x)=0,\,Tx=0$.

Предположим, что
$\|Ex\|=\|x\|$. Поскольку
$\|Ex\|\ls\|(I-T_{1})x\|\ls\|x\|$, то
$\|(I-T_{1})x\|=\|x\|$, откуда
$T_{1}x=0$. Тогда
$Ex=(I-T_{n})\ldots(I-T_{2})x$. Продолжив аналогичные рассуждения, получим
$T_{2}x=\ldots=T_{n}x=0$.
\end{proof}

Обобщая определение работы
\cite{Protasov} на бесконечномерный случай, дадим следующее определение.

\begin{definition}
Пусть
$A_{1},\ldots,A_{n}$ "--- линейные непрерывные операторы в $H$,
$1\ls p<\infty$.
$p$-радиусом набора операторов
$A_{1},\ldots,A_{n}$ назовём число
\begin{equation*}
\hat{\rho}_{p}(A_{1},\ldots,A_{n})=\lim_{k\to\infty}\left(\dfrac{1}{n^{k}}\sum\|A_{i(1)}\ldots A_{i(k)}\|^{p}\right)^
{\frac{1}{pk}},
\end{equation*}
где сумма берётся по всем наборам индексов
$1\ls i(1),\ldots,i(k)\ls n$.
\end{definition}
Определение корректно.
Действительно, определим
\begin{equation*}
a_{k,p}=\left(\dfrac{1}{n^{k}}\sum\|A_{i(1)}\ldots A_{i(k)}\|^{p}\right)^{\frac{1}{p}}.
\end{equation*}
Ясно, что
$a_{k+l,p}\ls a_{k,p}a_{l,p},\,k,l\gs 1$.
Поэтому существует
$\lim_{k\to\infty}a_{k,p}^{\frac{1}{k}}$.
Различные свойства $p$-радиуса для операторов в конечномерном гильбертовом пространстве,
а также его применения содержатся в работе \cite{Protasov}.
Мы сформулируем условие замкнутости суммы образов операторов $T_{1},\ldots,T_{n}$
в терминах $p$-радиуса набора операторов $I-T_{1},\ldots,I-T_{n}$.

\begin{statement}
Пусть
$1\ls p<\infty$. Сумма
$\tmd{Im}(T_{1})+\ldots+\tmd{Im}(T_{n})=H$ тогда и только тогда, когда
$\hat{\rho}_{p}(I-T_{1},\ldots,I-T_{n})<1$.
\end{statement}
\begin{proof}
\textbf{1.}
Пусть
$\tmd{Im}(T_{1})+\ldots+\tmd{Im}(T_{n})\neq H$. Тогда для произвольного набора индексов
$i(1),\ldots,i(k)$ норма
$\|(I-T_{i(1)})\ldots(I-T_{i(k)})\|=1$. Действительно, если
$\|(I-T_{i(1)})\ldots(I-T_{i(k)})\|<1$, то оператор
$I-(I-T_{i(1)})\ldots(I-T_{i(k)})$ обратим, а потому
$\tmd{Im}(T_{1})+\ldots+\tmd{Im}(T_{n})=H$, противоречие. Отсюда
$\hat{\rho}_{p}(I-T_{1},\ldots,I-T_{n})=1$.

\textbf{2.}
Пусть теперь
$\tmd{Im}(T_{1})+\ldots+\tmd{Im}(T_{n})=H$.
Тогда $T_{1}+\ldots+T_{n}\gs\ve I$ для некоторого $\ve>0$.
Из леммы \ref{L:lemma4} следует, что
$\|(I-T_{n})\ldots(I-T_{1})\|<1$. Поэтому
$a_{n,p}(I-T_{1},\ldots,I-T_{n})<1$. Для всех $k\gs 1$
$a_{kn,p}\ls a_{n,p}^{k}$,
$a_{kn,p}^{\frac{1}{kn}}\ls a_{n,p}^{\frac{1}{n}}$.
Перейдя к границе при
$k\to\infty$, получаем
$\hat{\rho}_{p}(I-T_{1},\ldots,I-T_{n})\ls a_{n,p}^{\frac{1}{n}}<1$.
\end{proof}

Сформулируем условие замкнутости суммы образов операторов в терминах
порожденной ими $C^{*}$-алгебры. Обозначим $\mathcal{B}(H)$ множество всех линейных непрерывных операторов в $H$.
Пусть
$\mc{A}(T_{1},\ldots,T_{n})$ "--- алгебра, порожденная
$T_{1},\ldots,T_{n}$, и
$C^{*}(T_{1},\ldots,T_{n})$ "--- замыкание
$\mc{A}(T_{1},\ldots,T_{n})$ в
$\mc{B}(H)$.

\begin{statement}\label{ST:Cstaralgebras}
Ортопроектор
$P_{\ol{\tmd{Im}(T_{1})+\ldots+\tmd{Im}(T_{n})}}\in C^{*}(T_{1},\ldots,T_{n})$
тогда и только тогда, когда
$\tmd{Im}(T_{1})+\ldots+\tmd{Im}(T_{n})$ "--- подпространство.
\end{statement}
\begin{proof}
Без ограничения общности можно считать, что
$\sum_{k=1}^{n}\tmd{Im}(T_{k})$ плотна в
$H$.

\textbf{1.} Пусть $\tmd{Im}(T_{1})+\ldots+\tmd{Im}(T_{n})=H$. Тогда
$\|(I-T_{n})\ldots(I-T_{1})\|<1$, а поэтому
$I-((I-T_{n})\ldots(I-T_{1}))^{k}\to I,\,k \to\infty$.
Отсюда
$I\in C^{*}(T_{1},\ldots,T_{n})$.

\textbf{2.} Пусть теперь $I\in C^{*}(T_{1},\ldots,T_{n})$.
Существует элемент $a\in\mathcal{A}(T_{1},\ldots,T_{n})$ такой, что $\|I-a\|<1$.
Тогда $a$ обратим. Следовательно, $\tmd{Im}(a)=H$, а значит, и
$\tmd{Im}(T_{1})+\ldots+\tmd{Im}(T_{n})=H$.
\end{proof}

\begin{remark}
Из приведенного доказательства следует, что
$I\in C^{*}(T_{1},\ldots,T_{n})$ тогда и только тогда, когда
$\sum_{k=1}^{n}\tmd{Im}(T_{k})=H$. Это верно для произвольных
неотрицательных $T_{k}$ (не обязательно $\|T_{k}\|<2$).
Действительно, вместо операторов
$T_{k}$ можно рассмотреть операторы $\lambda T_{k}$, где
$\lambda>0$ таково, что $\lambda\|T_{k}\|<2,\,1\ls k\ls n$.
\end{remark}

\begin{conclusion}
Пусть алгебра
$\mc{A}=\mc{A}(T_{1},\ldots,T_{n})$ конечномерна.
Определим $T=\sum_{k=1}^{n}T_{k}$. Поскольку
$\mc{A}$ конечномерна, то для некоторого ненулевого полинома
$F$
$~F(T)=0$. Поэтому
$\sigma(T)$ конечен.
Тогда
$\sum_{k=1}^{n}\tmd{Im}(T_{k})$ замкнуто, кроме того,
$C^{*}(T_{1},\ldots,T_{n})=\mc{A}(T_{1},\ldots,T_{n})$.
Поэтому
$P_{\tmd{Im}(T_{1})+\ldots+\tmd{Im}(T_{n})}\in\mc{A}(T_{1},\ldots,T_{n})$.
\end{conclusion}

Из утверждения \ref{ST:Cstaralgebras} и утверждения
\ref{ST:compactproduct} следует следующий критерий замкнутости
суммы подпространств.

\begin{statement}
Пусть
$H_{1},\ldots,H_{n}$ "--- подпространства $H$, натуральное
$~m\ls n-1$. Предположим, что
оператор
$P_{i}P_{j}$ компактный для всех $\,1\ls i\ls m,\,m+1\ls j\ls n$.
Если
$H_{1}+\ldots+H_{m},H_{m+1}+\ldots+H_{n}$ "--- подпространства, то
$H_{1}+\ldots+H_{n}$ "--- подпространство, причём
\begin{equation*}
P_{H_{1}+\ldots+H_{n}}=P_{H_{1}+\ldots+H_{m}}+P_{H_{m+1}+\ldots+H_{n}}\pmod{S_{\infty}(H)}.
\end{equation*}
\end{statement}

Обратное утверждение неверно, как показывает следующий пример

\begin{example}
В гильбертовом пространстве
$H=l_{2}\bi l_{2}$ рассмотрим 3 подпространства
$H_{1},H_{2},H_{3}$, ортопроекторы на которые
\begin{equation*}
P_{1}=\begin{pmatrix}
0&0\\
0&I
\end{pmatrix},\,
P_{2}=\begin{pmatrix}
I&0\\
0&0
\end{pmatrix},\,
P_{3}=\begin{pmatrix}
I-a^{2}&a\sqrt{(I-a^{2})}\\
a\sqrt{(I-a^{2})}&a^{2}
\end{pmatrix},
\end{equation*}
где самосопряженный
$a:l_{2}\to l_{2}$,
$0\ls a\ls I$,
$\ker(a)=0$ и
$a$ компактный.
Тогда операторы
$P_{1}P_{2},\,P_{1}P_{3}$ компактны,
$H_{1}+H_{2}+H_{3}=H$, но
$H_{2}+H_{3}$ "--- не подпространство.
\end{example}

\subsection{Образы элементов алгебры, порожденной $T_{1},\ldots,T_{n}$}

Для последовательности
$\mc{I}=(i(1),\ldots,i(k)),\,1\ls i(r)\ls n,\,1\ls r\ls k$
определим оператор
\begin{equation*}
E_{\mc{I}}=(I-T_{i(1)})\ldots(I-T_{i(k)}).
\end{equation*}
Определим последовательность
$\mc{I}^{*}=(i(k),\ldots,i(1))$, тогда $E_{\mc{I}}^{*}=E_{\mc{I}^{*}}$.
Далее мы будем рассматривать оператор
$T=\sum_{\mc{I}\in\mc{U}}\alpha_{\mc{I}}E_{\mc{I}}$, где все
$\alpha_{\mc{I}}>0,\,\mc{I}\in\mc{U}$
(тут $\mc{U}$ "--- некоторое множество последовательностей).
Мы предполагаем, что для всякого $1\ls l\ls n$
найдётся последовательность
$\mc{I}\in\mc{U}$, в которой встречается $l$.

\begin{statement}\label{ST:statement3}
Следующие условия равносильны:
\begin{enumerate}
\item
$\sum_{k=1}^{n}\tmd{Im}(T_{k})$ замкнуто,
\item
$\tmd{Im}((\sum_{\mc{I}\in\mc{U}}\alpha_{\mc{I}})I-T)$ замкнуто.
\end{enumerate}
При выполнении этих условий (одного из этих условий)
\begin{equation*}
\sum_{k=1}^{n}\tmd{Im}(T_{k})=\tmd{Im}((\sum_{\mc{I}\in\mc{U}}\alpha_{\mc{I}})I-T).
\end{equation*}
\end{statement}
\begin{proof}
Без ограничения общности можно считать, что
$\sum_{k=1}^{n}\tmd{Im}(T_{k})$ плотно в
$H$.

\textbf{1.}
Пусть
$\tmd{Im}(T_{1})+\ldots+\tmd{Im}(T_{n})=H$. Тогда
$T_{1}+\ldots+T_{n}\gs\ve I$ для некоторого $\ve>0$.
Возьмём произвольный
$x\in H$. Из леммы
\ref{L:lemma4} следует, что для всякого
$\mc{I}\in\mc{U}$ существует $m_{\mc{I}}>0$, не зависящее от $x$, такое, что
$m_{\mc{I}}(\|x\|^{2}-\|E_{\mc{I}}x\|^{2})\gs\sum_{i\in \mc{I}}(T_{i}x,x)$. Прибавим эти неравенства.
Тогда
$\sum_{\mc{I}\in\mc{U}}m_{\mc{I}}(\|x\|^{2}-\|E_{\mc{I}}x\|^{2})\gs\ve\|x\|^{2}$.
Поэтому существует
$\mc{J}\in\mc{U}$ (зависящее от $x$), для которого $m_{\mc{J}}(\|x\|^{2}-\|E_{\mc{J}}x\|^{2})\gs\dfrac{\ve}{|\mc{U}|}\|x\|^{2}$
т.е.
$\|E_{\mc{J}}x\|\ls\sqrt{1-\dfrac{\ve}{m_{\mc{J}}|\mc{U}|}}\|x\|$. Поэтому
\begin{equation*}
\|Tx\|\ls(\sum_{\mc{I}\neq \mc{J}}\alpha_{\mc{I}}+\alpha_{\mc{J}}\sqrt{1-\dfrac{\ve}{m_{\mc{J}}|\mc{U}|}})\|x\|.
\end{equation*}
Поскольку $\mc{J}=\mc{J}(x)\in\mc{U}$, то
$\|T\|<\sum_{\mc{I}\in\mc{U}}\alpha_{\mc{I}}$, а поэтому оператор
$(\sum_{\mc{I}\in\mc{U}}\alpha_{\mc{I}})I-T$ обратим, откуда следует нужное.

\textbf{2.}
Пусть теперь
$\tmd{Im}((\sum_{\mc{I}\in\mc{U}}\alpha_{\mc{I}})I-T)$ "--- подпространство.
Обозначим
$A=(\sum_{\mc{I}\in\mc{U}}\alpha_{\mc{I}})I-T$.
Покажем, что $\ker(A^{*})=0$. Действительно, пусть
$x\in\ker A^{*}$. Поскольку все
$\alpha_{\mc{I}}>0,\,\mc{I}\in\mc{U}$, то для всякого
$\mc{I}\in\mc{U}$ $\|E_{\mc{I}^{*}} x\|=\|x\|$. Из леммы
\ref{L:lemma5} получим $x\in\bigcap_{i\in \mc{I}}\ker T_{i}$.
Поэтому
$x\in\bigcap_{i=1}^{n}\ker T_{i}$, т.е. $x=0$. Итак,
$\ker A^{*}=0$, поэтому
$\tmd{Im}(A)=H$. Поскольку $A$ принадлежит алгебре, порожденной $T_{1},\ldots,T_{n}$, то
$\tmd{Im}(T_{1})+\ldots+\tmd{Im}(T_{n})=H$.
\end{proof}

После этого утверждения естественно возникает вопрос:
следует ли из равенства
$\tmd{Im}(T_{1})+\ldots+\tmd{Im}(T_{n})=
\tmd{Im}((\sum_{\mc{I}\in\mc{U}}\alpha_{\mc{I}})I-T)$
замкнутость
$\tmd{Im}(T_{1})+\ldots+\tmd{Im}(T_{n})$?
Всегда
$\tmd{Im}(T_{1})=\tmd{Im}(I-(I-T_{1}))$, но
$\tmd{Im}(T_{1})$ не обязательно замкнут. Поэтому естественно наложить на
операторы $T_{i},\,1\ls i\ls n$ дополнительное условие, состоящее в том, что
$\tmd{Im}(T_{i})$ замкнут для всех
$1\ls i\ls n$.

Для изучения этого вопроса нам понадобятся вспомогательные результаты.

\begin{lemma}\label{L:Images0}
Пусть
$M$ "--- гильбертово пространство,
$a,b\in\mc{B}(M)$,
$c=aba^{*}$. Если
$\tmd{Im}(c)=\tmd{Im}(a)$, то
$\tmd{Im}(c)\supset(\ker(c))^{\bot}$.
\end{lemma}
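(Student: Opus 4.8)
The plan is to normalise $a$ to a positive operator with trivial kernel and then read everything off from the closed-range theorem. First I would strip the asymmetry of $b$ and the non-positivity of $a$ by means of the polar decomposition. Writing $a=pu$ with $p=\sqrt{aa^{*}}\gs 0$ and $u$ a partial isometry (initial space $(\ker a)^{\bot}$, final space $\ol{\tmd{Im}(a)}$), one gets
\begin{equation*}
c=aba^{*}=pu\,b\,u^{*}p=p(ubu^{*})p,
\end{equation*}
so, putting $\beta=ubu^{*}\in\mc{B}(M)$ and $a'=p=(a')^{*}$, we have $c=a'\beta(a')^{*}$ with $\tmd{Im}(a')=\tmd{Im}(\sqrt{aa^{*}})=\tmd{Im}(a)$ (the last equality being the stated corollary of Douglas's theorem). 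Since the conclusion only involves $c$, it suffices to prove the lemma under the extra assumption $a=p\gs 0$.

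Put $N=\ol{\tmd{Im}(p)}=(\ker p)^{\bot}$. As $c=pbp$ annihilates $\ker p=N^{\bot}$ and $\tmd{Im}(c)\subset\tmd{Im}(p)\subset N$, we have $\ker c\supset N^{\bot}$, so both $\tmd{Im}(c)$ and $(\ker c)^{\bot}$ lie in $N$; with respect to $M=N\oplus N^{\bot}$ one has $c=c_{0}\oplus 0_{N^{\bot}}$, where $c_{0}=p_{0}b_{0}p_{0}$, $p_{0}=p\upr_{N}\colon N\to N$ is positive, injective and has dense range, and $b_{0}=P_{N}b\upr_{N}\colon N\to N$. Because $\ker c=\ker c_{0}\oplus N^{\bot}$ and $\tmd{Im}(c)=\tmd{Im}(c_{0})$, the inclusion to be proved in $M$ is equivalent to the same inclusion for $c_{0}$ in $N$, so I may simply assume that $a=p$ is positive, injective and has dense range. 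Then the hypothesis $\tmd{Im}(pbp)=\tmd{Im}(p)$ is rigid: since $\tmd{Im}(pbp)=p(\tmd{Im}(bp))$ and $p$ is injective, $p(\tmd{Im}(bp))=p(M)$ forces $\tmd{Im}(bp)=M$, i.e. $bp$ is surjective. Hence by the open mapping theorem its adjoint $(bp)^{*}=pb^{*}$ is bounded below, $\|pb^{*}y\|\gs\ve\|y\|$ for some $\ve>0$, so $\tmd{Im}(pb^{*})$ is closed. As $c^{*}=pb^{*}p$, this gives
\begin{align*}
\ol{\tmd{Im}(c^{*})}=\ol{\tmd{Im}(pb^{*}p)}&\subset\ol{\tmd{Im}(pb^{*})}=\tmd{Im}(pb^{*})\\
&=p(\tmd{Im}(b^{*}))\subset p(M)=\tmd{Im}(p)=\tmd{Im}(c),
\end{align*}
and since $(\ker c)^{\bot}=\ol{\tmd{Im}(c^{*})}$ in any Hilbert space, we obtain $(\ker c)^{\bot}\subset\tmd{Im}(c)$. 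Transferring this back through $M=N\oplus N^{\bot}$ and undoing the polar substitution $(a,b)\mapsto(p,\,ubu^{*})$ completes the argument.

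The only substantive step is the implication ``$\tmd{Im}(pbp)=\tmd{Im}(p)\ \Rightarrow\ bp$ surjective $\Rightarrow\ pb^{*}$ bounded below''; everything else is bookkeeping. The likeliest source of trouble is the legitimacy of the first reduction, namely checking that replacing $(a,b)$ by $(p,\,ubu^{*})$ preserves both the operator $c$ and the range hypothesis $\tmd{Im}(c)=\tmd{Im}(a)$ — which is precisely why one uses the polar decomposition together with $\tmd{Im}(a)=\tmd{Im}(\sqrt{aa^{*}})$ — and, secondarily, verifying that passing to $N$ alters neither side of the inclusion.
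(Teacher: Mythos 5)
Your argument is correct, but it is genuinely different from the one in the paper. The paper applies Douglas's theorem directly to the inclusion $\tmd{Im}(a)\subset\tmd{Im}(c)$ to write $a=cd$ with $\tmd{Im}(d)\subset(\ker c)^{\bot}$; then $c=cdba^{*}$ gives $c(I-dba^{*})=0$, which together with $\tmd{Im}(dba^{*})\subset(\ker c)^{\bot}$ identifies $dba^{*}$ as the orthogonal projection onto $(\ker c)^{\bot}$, and taking adjoints yields $(\ker c)^{\bot}=\tmd{Im}(ab^{*}d^{*})\subset\tmd{Im}(a)=\tmd{Im}(c)$ in four lines, with no normalisation of $a$ whatsoever. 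You instead spend the Douglas input on the polar decomposition (to replace $a$ by $p=\sqrt{aa^{*}}$ without changing $c$ or the range hypothesis), cut down to $N=\ol{\tmd{Im}(p)}$ so that $p$ becomes injective with dense range, and then extract the real content of the hypothesis: $p(bp(M))=p(M)$ with $p$ injective forces $bp$ onto, hence $pb^{*}$ bounded below, hence $\ol{\tmd{Im}(c^{*})}\subset\tmd{Im}(pb^{*})\subset\tmd{Im}(p)=\tmd{Im}(c)$, and $(\ker c)^{\bot}=\ol{\tmd{Im}(c^{*})}$ finishes it. All the steps check out, including the two reductions (the polar step is what makes $\ol{\tmd{Im}}$ and $(\ker)^{\bot}$ coincide so that the restriction to $N$ is clean). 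The trade-off is clear: the paper's proof is shorter and reduction-free, while yours makes explicit exactly which quantitative fact the hypothesis buys (surjectivity of $bp$ and the resulting lower bound on $pb^{*}$), at the cost of two layers of bookkeeping.
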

\begin{proof}
Из теоремы Р. Дугласа следует, что
$a=cd$, где
$\tmd{Im}(d)\subset(\ker(c))^{\bot}$. Тогда
$c=cdba^{*}$,
$c(I-dba^{*})=0$. Для произвольного
$x\in M$
$~x=(I-dba^{*})x+dba^{*}x$, при этом
$(I-dba^{*})x\in\ker(c)$,
$dba^{*}x\in(\ker(c))^{\bot}$. Поэтому
$dba^{*}=P_{(\ker(c))^{\bot}}$ "--- ортопроектор на
$(\ker(c))^{\bot}$. Поскольку ортопроектор самосопряжен, то
$ab^{*}d^{*}=P_{(\ker(c))^{\bot}}$, откуда
$(\ker(c))^{\bot}\subset\tmd{Im}(a)=\tmd{Im}(c)$.
\end{proof}

Для гильбертовых пространств
$H,H'$ обозначим
$\mc{B}(H,H')$ множество всех линейных непрерывных операторов из
$H$ в $H'$.
Пусть
$M_{1},\ldots,M_{n},M$ "--- гильбертовы пространства,
$a_{i}\in\mc{B}(M_{i},M)$ "--- набор линейных непрерывных операторов.
Определим
\begin{equation*}
\mc{M}(a_{1},\ldots,a_{n})=\left\{\sum_{1\ls i,j\ls n}a_{i}a_{i,j}a_{j}^{*},\,a_{i,j}\in\mc{B}(M_{j},M_{i}),\,
1\ls i,j\ls n\right\}\subset\mc{B}(M)
\end{equation*}
Отметим, что
$\mc{M}(a_{1},\ldots,a_{n})$ является $\ast$-алгеброй.
Из теоремы Р.Дугласа следует, что если набор операторов
$\wt{a}_{k}\in\mc{B}(\wt{M}_{k},M),\,1\ls k\ls n$ такой, что
$\tmd{Im}(\wt{a}_{k})=\tmd{Im}(a_{k}),\,1\ls k\ls n$, то
$\mc{M}(a_{1},\ldots,a_{n})=
\mc{M}(\wt{a}_{1},\ldots,\wt{a}_{n})$.

\begin{lemma}\label{L:Images}
Пусть оператор
$a\in\mc{M}(a_{1},\ldots,a_{n})$ и
$\tmd{Im}(a)=\sum_{k=1}^{n}\tmd{Im}(a_{k})$.
Тогда
$\tmd{Im}(a)\supset(\ker a)^{\bot}$.
\end{lemma}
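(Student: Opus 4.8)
The plan is to reduce the statement to the case $n=1$, i.e.\ to Lemma~\ref{L:Images0}, by packaging $a_1,\dots,a_n$ into a single operator. Put $N=\bi_{k=1}^{n}M_k$ and let $A\in\mc{B}(N,M)$ be the operator $A(x_1,\dots,x_n)=a_1x_1+\dots+a_nx_n$, so that $A^{*}y=(a_1^{*}y,\dots,a_n^{*}y)$ and $\tmd{Im}(A)=\sum_{k=1}^{n}\tmd{Im}(a_k)$. Since $a\in\mc{M}(a_1,\dots,a_n)$ we may write $a=\sum_{1\ls i,j\ls n}a_ia_{i,j}a_j^{*}$ with $a_{i,j}\in\mc{B}(M_j,M_i)$; letting $B\in\mc{B}(N)$ be the operator with block matrix $(a_{i,j})_{1\ls i,j\ls n}$, one checks directly that $ABA^{*}=\sum_{i,j}a_ia_{i,j}a_j^{*}=a$. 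The hypothesis $\tmd{Im}(a)=\sum_k\tmd{Im}(a_k)$ now reads $\tmd{Im}(a)=\tmd{Im}(A)$.

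Next I would repeat, in this setting, the argument used for Lemma~\ref{L:Images0}. From $\tmd{Im}(A)\subset\tmd{Im}(a)$ and the theorem of R.~Douglas (Subsection~\ref{SS:Douglas}) there is $C\in\mc{B}(N,M)$ with $A=aC$ and $\tmd{Im}(C)\subset(\ker a)^{\bot}$. Then $a=ABA^{*}=aCBA^{*}$, hence $a(I-CBA^{*})=0$, i.e.\ $\tmd{Im}(I-CBA^{*})\subset\ker a$. For every $x\in M$ the decomposition $x=(I-CBA^{*})x+CBA^{*}x$ has its first term in $\ker a$ and its second term in $\tmd{Im}(C)\subset(\ker a)^{\bot}$, so it is the orthogonal decomposition of $x$ with respect to $M=\ker a\oplus(\ker a)^{\bot}$; by uniqueness, $CBA^{*}=P_{(\ker a)^{\bot}}$. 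Taking adjoints, $AB^{*}C^{*}=P_{(\ker a)^{\bot}}$, and therefore
\begin{equation*}
(\ker a)^{\bot}=\tmd{Im}\bigl(P_{(\ker a)^{\bot}}\bigr)=\tmd{Im}(AB^{*}C^{*})\subset\tmd{Im}(A)=\tmd{Im}(a),
\end{equation*}
which is the desired inclusion.

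I do not expect a genuine obstacle here: the whole content is the identity $a=ABA^{*}$ together with the observation (already the heart of Lemma~\ref{L:Images0}) that once $a(I-CBA^{*})=0$ and $\tmd{Im}(C)\subset(\ker a)^{\bot}$, the operator $CBA^{*}$ is \emph{forced} to be the orthogonal projection onto $(\ker a)^{\bot}$. The only points needing a little care are the block-matrix bookkeeping for $ABA^{*}=a$ and the fact that no self-adjointness of $B$ (equivalently, no symmetry of the family $(a_{i,j})$) is used anywhere, since both Douglas's theorem and the projection argument are insensitive to it. Alternatively, one could avoid rewriting the proof by applying Lemma~\ref{L:Images0} literally on $M\oplus N$ to the pair of operators that have $A$, respectively $B$, in the upper-right block and zeros elsewhere, whose $aba^{*}$-product is $a$ placed in the upper-left block; but the direct argument above seems shorter.
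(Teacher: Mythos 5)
Your proof is correct. It takes a slightly different route from the paper's: the paper reduces literally to Lemma~\ref{L:Images0} by setting $b=(\sum_{k=1}^{n}a_{k}a_{k}^{*})^{1/2}$ (so that $\tmd{Im}(b)=\sum_{k}\tmd{Im}(a_{k})$ by the corollary of Douglas's theorem), factoring each $a_{i}=bb_{i}$ via Douglas, and writing $a=b\left(\sum_{i,j}b_{i}a_{i,j}b_{j}^{*}\right)b^{*}$, whereas you work directly with the rectangular operator $A=(a_{1},\ldots,a_{n})$, observe $a=ABA^{*}$, and re-run the projection argument of Lemma~\ref{L:Images0} in this rectangular setting. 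Your route saves the square root and the $n$ auxiliary factorizations $a_{i}=bb_{i}$, at the price of repeating (rather than invoking) the proof of Lemma~\ref{L:Images0}; as you yourself note, the repetition can be avoided by embedding $A$ and $B$ as off-diagonal blocks of operators on $M\oplus N$ and citing Lemma~\ref{L:Images0} verbatim. Every step you use --- the identity $ABA^{*}=a$, the Douglas factorization $A=aC$ with $\tmd{Im}(C)\subset(\ker a)^{\bot}$, the identification $CBA^{*}=P_{(\ker a)^{\bot}}$ by uniqueness of the orthogonal decomposition, and the passage to adjoints --- checks out, and indeed no self-adjointness of the block matrix $(a_{i,j})$ is needed anywhere.
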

\begin{proof}
Определим
$b=(\sum_{k=1}^{n}a_{k}a_{k}^{*})^{1/2}$, тогда
$\tmd{Im}(b)=\sum_{k=1}^{n}\tmd{Im}(a_{k})\supset\tmd{Im}(a_{i})$.
Из теоремы Р. Дугласа следует, что
$a_{i}=bb_{i},\,1\ls i\ls n$, где
$b_{i}\in\mc{B}(M_{i},M)$. Тогда
$a_{i}^{*}=b_{i}^{*}b$. Поэтому
$a=b(\sum_{i,j}b_{i}a_{i,j}b_{j}^{*})b$. Из леммы
\ref{L:Images0} следует
$\tmd{Im}(a)\supset(\ker(a))^{\bot}$.
\end{proof}

\begin{conclusion}
Пусть в условиях леммы \ref{L:Images}
$\ker a=\ker a^{*}$ (это выполнено, например, если
$a$ нормальный). Тогда
$\tmd{Im}(a)=\sum_{k=1}^{n}\tmd{Im}(a_{k})$ замкнут.
\end{conclusion}

\begin{statement}
Пусть
$\sum_{k=1}^{n}\tmd{Im}(T_{k})=\tmd{Im}((\sum_{\mc{I}\in\mc{U}}\alpha_{\mc{I}})I-T)$
и для всех
$1\ls k\ls n$ образ $\tmd{Im}(T_{k})$ замкнут.
Тогда
$\sum_{k=1}^{n}\tmd{Im}(T_{k})$ замкнуто.
\end{statement}
\begin{proof}
Пусть
$A=(\sum_{\mc{I}\in\mc{U}}\alpha_{\mc{I}})I-T$. Поскольку
$\tmd{Im}(T_{k})$ замкнут, то $\tmd{Im}(T_{k})=\tmd{Im}(T_{k}^{\frac{1}{2}})$.
Ясно, что
$A\in\mc{M}(T_{1}^{\frac{1}{2}},\ldots,T_{n}^{\frac{1}{2}})$.
Из леммы \ref{L:Images} следует, что $\tmd{Im}(A)\supset(\ker(A))^{\bot}$.
Аналогично доказательству второй части утверждения \ref{ST:statement3}
можно показать, что
$\ker A=\bigcap_{k=1}^{n}\ker T_{k}=\tmd{Im}(A)^{\bot}$. Поэтому
$\tmd{Im}(A)\supset\ol{\tmd{Im}(A)}$, что
означает замкнутость $\tmd{Im}(A)$.
\end{proof}

\begin{example}
Пусть
$H_{1},\ldots,H_{n}$ "--- подпространства $H$,
$P_{1},\ldots,P_{n}$ "--- соответствующие ортопроекторы.
Следующие утверждения равносильны:
\begin{enumerate}
\item
$\tmd{Im}(I-(I-P_{n})\ldots(I-P_{1}))=\sum_{k=1}^{n}H_{k}$,
\item
$\sum_{k=1}^{n}H_{k}$ замкнуто.
\end{enumerate}
\end{example}

Используя лемму \ref{L:Images}, можем сформулировать
следующий критерий замкнутости суммы образов операторов.

\begin{statement}
Пусть
$a_{k}\in\mc{B}(M_{k},M),\,1\ls k\ls n$. Сумма
$\sum_{k=1}^{n}\tmd{Im}(a_{k})$ замкнута тогда и только тогда, когда
$(a_{1}a_{1}^{*}+\ldots+a_{n}a_{n}^{*})^{1/2}\in\mc{M}(a_{1},\ldots,a_{n})$.
\end{statement}
\begin{proof}
\textbf{1.}
Пусть сначала
$a=(a_{1}a_{1}^{*}+\ldots+a_{n}a_{n}^{*})^{1/2}\in\mc{M}(a_{1},\ldots,a_{n})$.
Поскольку
$\tmd{Im}(a)=\sum_{k=1}^{n}\tmd{Im}(a_{k})$ и
$\ker a=(\tmd{Im}(a))^{\bot}$ (поскольку $a$ самосопряжен), то из леммы
\ref{L:Images} получим $\tmd{Im}(a)\supset\ol{\tmd{Im}(a)}$, т.е.
$\tmd{Im}(a)$ "--- подпространство.

\textbf{2.}
Пусть теперь сумма
$\sum_{k=1}^{n}\tmd{Im}(a_{k})$ "--- подпространство.
Рассмотрев операторы $b_{k}=(a_{k}a_{k}^{*})^{1/2}$, можем сразу считать, что
операторы
$a_{k}\in\mc{B}(M)$, самосопряжены и неотрицательны.
Кроме того, можно считать, что
$\sum_{k=1}^{n}\tmd{Im}(a_{k})=H$. Тогда оператор
$a_{1}^{2}+\ldots+a_{n}^{2}$ обратим. Поэтому
\begin{align*}
&(a_{1}^{2}+\ldots+a_{n}^{2})^{\frac{1}{2}}=(a_{1}^{2}+\ldots+a_{n}^{2})
(a_{1}^{2}+\ldots+a_{n}^{2})^{-\frac{3}{2}}(a_{1}^{2}+\ldots+a_{n}^{2})=\\
&=\sum_{1\ls i,j\ls n}a_{i}^{2}(a_{1}^{2}+\ldots+a_{n}^{2})^{-\frac{3}{2}}a_{j}^{2}\in\mc{M}(a_{1},\ldots,a_{n}).
\end{align*}
\end{proof}

\begin{conclusion}
Пусть
$H_{1},\ldots,H_{n}$ "--- подпространства
$H$, $P_{1},\ldots,P_{n}$ "--- соответствующие ортопроекторы.
$\sum_{k=1}^{n}H_{k}$ замкнуто тогда и только тогда, когда
$(\sum_{k=1}^{n}P_{k})^{1/2}\in\mc{M}(P_{1},\ldots,P_{n})$.
\end{conclusion}

\subsection{Критерии замкнутости суммы подпространств
в терминах $\tmd{Im}(A)$, где
$A=\sum_{i,j}\alpha_{i,j}P_{i}P_{j},\,\alpha_{i,j}\in\mathbb{C}$}

Пусть
$H_{1},\ldots,H_{n}$ "--- подпространства $H$ с соответствующими ортопроекторами
$P_{1},\ldots,P_{n}$. Пусть сначала
$n=2$. Из утверждения
\ref{ST:functionpair3} следует, что если
$A\in\mc{A}(P_{1},P_{2})$ (алгебра, порожденная ортопроекторами $P_{1},P_{2}$) и
$\tmd{Im}(A)=H_{1}+H_{2}$, то
$H_{1}+H_{2}$ замкнуто.
При
$n\gs 3$ аналогичное утверждение, вообще говоря, неверно.

\begin{example}
Пусть
$A=P_{1}P_{2}+P_{3}$. Мы построим подпространства
$H_{1},H_{2},H_{3}$, такие, что\\
$\textbf{(Cond1)}$
$\tmd{Im}(A)=H_{1}+H_{2}+H_{3}$\\
$\textbf{(Cond2)}$
$H_{1}+H_{2}+H_{3}$ не замкнуто.

$\textbf{(Cond1)}$
равносильно
$H_{2}\subset H_{1}+H_{3}$ и
$\tmd{Im}(A)=H_{1}+H_{3}$. Для выполнения
$\tmd{Im}(A)=H_{1}+H_{3}$ достаточно, чтобы пара подпространств
$H_{1},H_{3}$ обладала свойством обратного наилучшего приближения относительно пары операторов
$P_{1}P_{2},P_{3}$
(см. подраздел \ref{SS:IBAP}).
Используя утверждение
\ref{ST:IBAP1}, получим, что для выполнения
$\textbf{(Cond1),(Cond2)}$ достаточно, чтобы:
\begin{enumerate}
\item
$H_{2}\subset H_{1}+H_{3}$,
\item
существует
$\ve>0$, такое, что
$\|P_{2}x\|\gs\ve\|x\|,\,x\in H_{1}$,
\item
подпространства
$P_{2}(H_{1}),H_{3}$ линейно независимы и их сумма замкнута,
\item
$H_{1}+H_{3}$ не замкнуто.
\end{enumerate}

Пусть операторы
$a,b:l_{2}\to l_{2}$ удовлетворяют следующим условиям:
\begin{enumerate}
\item
$a$ самосопряжен и $\frac{1}{4}I\ls a\ls I$,
\item
$1\in\sigma(a)$ и $\ker(I-a)=0$,
\item
$E([1/4,3/4))l_{2}$ бесконечномерно, тут
$E(\cdot)$ обозначена спектральная проекторнозначная мера
$a$,
\item
$(I-a^{-1/2}b)$ "--- изометрия и
$\tmd{Im}(I-a^{-1/2}b)=E([1/4,3/4))l_{2}$.
\end{enumerate}

Определим
$H=l_{2}\oplus l_{2}$,
$H_{1}=\{(x,0),\,x\in l_{2}\}$,
$H_{2}=\{(\sqrt{a}x,\sqrt{I-a}x),\,x\in l_{2}\}$,
$H_{3}=\{(bx,\sqrt{I-a}x),\,x\in l_{2}\}$.

Проверим, что
$H_{3}$ "--- подпространство.
Для этого достаточно показать, что существует
$\ve_{1}>0$, такое, что для всякого
$x\in l_{2}$
$\|bx\|+\|\sqrt{I-a}x\|\gs\ve_{1}\|x\|$.
Для этого достаточно доказать, что если
$x_{n}\in l_{2}$,
$bx_{n}\to 0$,
$\sqrt{I-a}x_{n}\to 0$ то
$x_{n}\to 0$. Докажем это. Имеем:
$-a^{-1/2}bx_{n}\to 0$,
$(I-a^{-1/2}b)x_{n}-x_{n}\to 0$,
$\sqrt{I-a}(I-a^{-1/2}b)x_{n}-\sqrt{I-a}x_{n}\to 0$,
$\sqrt{I-a}(I-a^{-1/2}b)x_{n}\to 0$.
Поскольку
$\|\sqrt{I-a}z\|\gs\frac{1}{2}\|z\|$ для
$z\in E([1/4,3/4))l_{2}$, то
$(I-a^{-1/2}b)x_{n}\to 0$,
$x_{n}\to 0$.
Итак,
$H_{3}$ "--- подпространство.

Поскольку $H_{1}+H_{3}=\{(x,\sqrt{I-a}y,\,x,y\in l_{2})\}$, то
$H_{2}\subset H_{1}+H_{3}$ и
$H_{1}+H_{3}$ не замкнуто.

Пусть
$x=(y,0)\in H_{1}$. Поскольку
$P_{2}=\begin{pmatrix}
a&\sqrt{a(I-a)}\\
\sqrt{a(I-a)}&I-a
\end{pmatrix}$, то
$\|P_{2}x\|^{2}=(ay,y)\gs\frac{1}{4}\|y\|^{2}=\frac{1}{4}\|x\|^{2}$,
$\|P_{2}x\|\gs(1/2)\|x\|$. Легко видеть, что
$P_{2}(H_{1})=H_{2}$.

Покажем, что
$H_{2},H_{3}$ линейно независимы и их сумма замкнута.
Для этого достаточно доказать, что если
$X_{n}\in H_{2},Y_{n}\in H_{3}$ и
$X_{n}+Y_{n}\to 0$, то
$X_{n}\to 0$.
Докажем это. Пусть
$X_{n}=(\sqrt{a}x_{n},\sqrt{I-a}x_{n})$,
$Y_{n}=(by_{n},\sqrt{I-a}y_{n})$.
Тогда
$\sqrt{a}x_{n}+by_{n}\to 0$ и
$\sqrt{I-a}(x_{n}+y_{n})\to 0$.
Поэтому
$x_{n}+a^{-1/2}by_{n}\to 0$,
$\sqrt{I-a}(I-a^{-1/2}b)y_{n}\to 0$,
$(I-a^{-1/2}b)y_{n}\to 0$. Поскольку
$I-a^{-1/2}b$ изометрия, то
$y_{n}\to 0$,
$x_{n}\to 0$.

Итак,
$H_{1},H_{2},H_{3}$
удовлетворяют условиям
$\textbf{(Cond1),(Cond2)}$.
\end{example}

Далее мы будем рассматривать операторы вида
$A=\sum_{i,j=1}^{n}\alpha_{i,j}P_{i}P_{j}$, где
$\alpha_{i,j}\in\mathbb{C}$. Как показывает предыдущий пример, из
$\tmd{Im}(A)=\sum_{k=1}^{n}H_{k}$ вообще говоря не следует замкнутость
$\sum_{k=1}^{n}H_{k}$. Мы будем накладывать условия на
$\alpha_{i,j}$, чтобы это стало верным.
Далее
$\alpha_{i,i}>0,\,i=1,2,\ldots,n$. Определим
$\beta_{i,i}=\alpha_{i,i}$ и
\begin{equation*}
\beta_{i,j}=-\frac{1}{2}\sqrt{(\tmd{Re}\alpha_{i,j}+\tmd{Re}\alpha_{j,i})^{2}+(\tmd{Im}\alpha_{i,j}-\tmd{Im}\alpha_{j,i})^{2}},\,i\neq j.
\end{equation*}

Для произвольного
$x\in H$
\begin{align}\label{E:Images1}
&\tmd{Re}(Ax,x)=\sum_{i=1}^{n}\alpha_{i,i}\|P_{i}x\|^{2}+\sum_{i<j}\tmd{Re}(\alpha_{i,j}(P_{j}x,P_{i}x)+\alpha_{j,i}(P_{i}x,P_{j}x))\gs\\
&\gs\sum_{i=1}^{n}\alpha_{i,i}\|P_{i}x\|^{2}+2\sum_{i<j}\beta_{i,j}|(P_{i}x,P_{j}x)|\gs\sum_{i,j}\beta_{i,j}\|P_{i}x\|\|P_{j}x\|.\notag
\end{align}

\begin{statement}\label{ST:Images1}
Пусть матрица $B=(\beta_{i,j})$ положительно определена. Тогда условия равносильны:
\begin{enumerate}
\item
$\sum_{k=1}^{n}H_{k}$ замкнута,
\item
$\tmd{Im}(A)=\sum_{k=1}^{n}H_{k}$,
\item
$\tmd{Im}(A)$ замкнут.
\end{enumerate}
\end{statement}
\begin{proof}
Можно считать, что
$\sum_{k=1}^{n}H_{k}$ плотна в
$H$. Из неравенств
(\ref{E:Images1}) и положительной определённости
$B$ следует, что существует
$\ve_{1}>0$, такое, что для произвольного
$x\in H$
\begin{equation*}
\tmd{Re}(Ax,x)\gs\ve_{1}\sum_{k=1}^{n}\|P_{k}x\|^{2}.
\end{equation*}

\textbf{$(1)\Rightarrow(2),(3)$}
Поскольку
$\sum_{k=1}^{n}H_{k}=H$, то существует
$\ve_{2}>0$, такое, что
$\sum_{k=1}^{n}P_{k}\gs\ve_{2}I$.
Тогда
$\tmd{Re}(Ax,x)\gs\ve_{1}\ve_{2}\|x\|^{2}$, поэтому
$A$ обратим. В частности,
$\tmd{Im}(A)=H$.

\textbf{$(2)\Rightarrow(1)$}
Поскольку
$A\in\mc{M}(P_{1},\ldots,P_{n})$, то из леммы
\ref{L:Images} следует
$\tmd{Im}(A)\supset(\ker(A))^{\bot}$.
Пусть
$x\in\ker(A)$, тогда
$Ax=0$,
$\tmd{Re}(Ax,x)=0$. Поэтому
$P_{k}x=0,\,k=1,2,\ldots,n$, откуда
$x=0$. Итак,
$\ker(A)=0$, поэтому
$\tmd{Im}(A)=H$ и
$\sum_{k=1}^{n}H_{k}=H$.

\textbf{$(3)\Rightarrow(1)$}
В этом случае
$\tmd{Im}(A)=(\ker(A^{*}))^{\bot}$.
Пусть
$x\in\ker(A^{*})$, тогда
$A^{*}x=0$,
$(A^{*}x,x)=0$,
$(Ax,x)=0$,
$\tmd{Re}(Ax,x)=0$. Поэтому
$x=0$. Итак,
$\ker(A^{*})=0$, поэтому
$\tmd{Im}(A)=H$,
$\sum_{k=1}^{n}H_{k}=H$.
\end{proof}

Теперь мы откажемся от положительной определённости
$B$. Далее предполагаем, что выполнены следующие условия:\\
\textbf{(B1)}
$B$ неотрицательно определена,
$0\in\sigma(B)$ и имеет кратность
$1$,\\
\textbf{(B2)}
построим граф
$\Gamma_{\beta}$ с множеством вершин
$1,2,\ldots,n$ следующим образом:
$i$ соединено с
$j$ если
$\beta_{i,j}\neq 0$. Граф
$\Gamma_{\beta}$ связен.

Пусть вектор
$s=(s_{1},\ldots,s_{n})\in\mathbb{R}^{n}$
является базисом
$\ker(B)$. Тогда
$(Bs,s)=0$. Определим
$s'=(|s_{1}|,\ldots,|s_{n}|)$.
Поскольку
$\beta_{i,i}>0$ и
$\beta_{i,j}\ls 0$ при
$i\neq j$, то
$(Bs',s')\ls(Bs,s)=0$. Из неотрицательной определённости
$B$ следует
$(Bs',s')=0$,
$Bs'=0$. Поэтому можем считать, что
$s_{1},\ldots,s_{n}$ неотрицательны.
Поскольку
$Bs=0$, то
$\sum_{j=1}^{n}\beta_{i,j}s_{j}=0$,
$\alpha_{i,i}s_{i}=\sum_{j\neq i}(-\beta_{i,j})s_{j}$.
Поэтому если
$s_{i}=0$ и
$\beta_{i,j}\neq 0$ (т.е. $\beta_{i,j}<0$), то
$s_{j}=0$. Предположив, что для некоторого $i$
$s_{i}=0$, из связности
$\Gamma_{\beta}$ получим
$s_{1}=\ldots=s_{n}=0$, противоречие. Итак,
$s_{1},\ldots,s_{n}>0$.

\begin{remark}
Легко показать, что если выполнено
$\textbf{(B1)}$ и
$s_{1},\ldots,s_{n}>0$, то выполнено
$\textbf{(B2)}$.
\end{remark}

\begin{statement}\label{ST:Images2}
Пусть выполнены
$\textbf{(B1),(B2)}$. Тогда:
\begin{enumerate}
\item
если
$\tmd{Im}(A)=\sum_{k=1}^{n}H_{k}$, то
$\sum_{k=1}^{n}H_{k}$ замкнуто,
\item
если
$\tmd{Im}(A)$ замкнут, то $\sum_{k=1}^{n}H_{k}$ замкнуто,
\item
если
$\sum_{k=1}^{n}H_{k}=\sum_{k=1}^{n}H_{k}^{\bot}=H$, то
$A$ обратим.
\end{enumerate}
\end{statement}

\begin{lemma}\label{L:Images2}
Пусть выполнены условия
$\textbf{(B1),(B2)}$ и
$\bigcap_{k=1}^{n}H_{k}=\bigcap_{k=1}^{n}H_{k}^{\bot}=0$. Тогда
$\ker(A)=\ker(A^{*})=0$.
\end{lemma}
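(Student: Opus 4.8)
The plan is to deduce both $\ker(A)=0$ and $\ker(A^{*})=0$ from the single assertion: \emph{if $x\in H$ and $\tmd{Re}(Ax,x)=0$, then $x=0$}. This suffices, since $\ker(A)$ is obviously contained in $\{x:\tmd{Re}(Ax,x)=0\}$, and since $(A^{*}x,x)=\ol{(Ax,x)}$ we have $\tmd{Re}(A^{*}x,x)=\tmd{Re}(Ax,x)$, so $\ker(A^{*})$ lies in the same set.

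So fix $x$ with $\tmd{Re}(Ax,x)=0$ and put $t=(\|P_{1}x\|,\ldots,\|P_{n}x\|)\in\mathbb{R}^{n}$, a vector with nonnegative entries. Using inequality~\eqref{E:Images1} together with condition \textbf{(B1)} (so that $B=(\beta_{i,j})$ is positive semidefinite), I would write the chain
\begin{equation*}
0=\tmd{Re}(Ax,x)\gs\sum_{i}\alpha_{i,i}\|P_{i}x\|^{2}+2\sum_{i<j}\beta_{i,j}|(P_{i}x,P_{j}x)|\gs\sum_{i,j}\beta_{i,j}\|P_{i}x\|\,\|P_{j}x\|=t^{\top}Bt\gs0,
\end{equation*}
which forces all three displayed inequalities to be equalities. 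In particular $t^{\top}Bt=0$, hence $Bt=0$ since $B\gs0$; and because $\ker(B)$ is one-dimensional, spanned by a vector $s$ all of whose coordinates are strictly positive (the fact established in the paragraph preceding Statement~\ref{ST:Images2}), there is $\lambda\gs0$ with $\|P_{i}x\|=\lambda s_{i}$ for every $i$.

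It remains to rule out $x\neq0$ in the two cases. If $\lambda=0$, then $P_{i}x=0$ for all $i$, so $x\in\bigcap_{k}H_{k}^{\bot}=0$ by hypothesis, giving $x=0$. If $\lambda>0$, then $P_{i}x\neq0$ for every $i$, and here I would exploit the equality between the second and third expressions in the chain: subtracting the third from the second and using $\beta_{i,i}=\alpha_{i,i}$ gives $2\sum_{i<j}\beta_{i,j}\bigl(|(P_{i}x,P_{j}x)|-\|P_{i}x\|\,\|P_{j}x\|\bigr)=0$, and since $\beta_{i,j}\ls0$ while each factor in parentheses is $\ls0$, every summand is nonnegative, hence vanishes: $|(P_{i}x,P_{j}x)|=\|P_{i}x\|\,\|P_{j}x\|$ whenever $\beta_{i,j}\neq0$. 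By the equality case of Cauchy--Schwarz, $P_{i}x$ and $P_{j}x$ are then proportional (and both nonzero) along every edge of the graph $\Gamma_{\beta}$; since $\Gamma_{\beta}$ is connected by \textbf{(B2)}, propagating along paths shows that $P_{1}x,\ldots,P_{n}x$ are all nonzero scalar multiples of $y:=P_{1}x\neq0$. But $P_{j}x\in H_{j}$, so $y\in H_{j}$ for every $j$, whence $y\in\bigcap_{k}H_{k}=0$, contradicting $y\neq0$. Thus $\lambda>0$ is impossible, in both cases $x=0$, and $\ker(A)=\ker(A^{*})=0$.

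The routine ingredients are inequality~\eqref{E:Images1} (already proved) and the Cauchy--Schwarz equality discussion; the part needing care is the bookkeeping of the equality case — verifying that $\tmd{Re}(Ax,x)=0$ saturates the whole chain, and that connectedness of $\Gamma_{\beta}$ really does force all the vectors $P_{j}x$ onto a single line, so that one of them lands in $\bigcap_{k}H_{k}$. I expect that propagation-through-the-graph step to be the main obstacle.
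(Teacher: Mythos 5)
Your proof is correct and follows essentially the same route as the paper's: both reduce to showing that $\tmd{Re}(Ax,x)=0$ forces $x=0$, then use inequality~\eqref{E:Images1} with the positive semidefiniteness of $B$ and the one-dimensionality of $\ker(B)$ to get $\|P_{i}x\|=\lambda s_{i}$, dispose of $\lambda=0$ via $\bigcap_{k}H_{k}^{\bot}=0$, and of $\lambda>0$ via the Cauchy--Schwarz equality case propagated along the connected graph $\Gamma_{\beta}$ into $\bigcap_{k}H_{k}=0$. Your equality-case bookkeeping is in fact spelled out slightly more carefully than in the paper.
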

\begin{proof}
Пусть
$x\in\ker(A)$. Тогда
$Ax=0$,
$\tmd{Re}(Ax,x)=0$. Из неравенств
(\ref{E:Images1}) и неотрицательной определённости
$B$ следует, что в неравенствах
(\ref{E:Images1}) везде равенства и
$\sum_{i,j}\beta_{i,j}\|P_{i}x\|\|P_{j}x\|=0$.
Поэтому
$\|P_{k}x\|=\lambda s_{k},\,1\ls k\ls n$ для некоторого
$\lambda\gs 0$. Если
$\lambda=0$, то
$P_{k}x=0,\,1\ls k\ls n$, откуда
$x=0$. Предположим, что
$\lambda>0$. Тогда
$P_{k}x\neq 0,\,1\ls k\ls n$.
Поскольку в неравенствах
(\ref{E:Images1}) достигаются равенства, то если
$\beta_{i,j}\neq 0$, то
$|(P_{i}x,P_{j}x)|=\|P_{i}x\|\|P_{j}x\|$, существует
$\lambda_{i,j}\in\mathbb{C}$ такое, что
$P_{i}x=\lambda_{i,j}P_{j}x$. Поскольку
$\Gamma_{\beta}$ связен, то
$P_{1}x=\lambda_{2}P_{2}x=\ldots=\lambda_{n}P_{n}x$
для некоторых комплексных
$\lambda_{2},\ldots,\lambda_{n}$. Поскольку
$\bigcap_{k=1}^{n}H_{k}=0$, то
$P_{1}x=0$, противоречие с
$\lambda>0$.
Итак,
$\ker(A)=0$.

Докажем, что
$\ker(A^{*})=0$. Пусть
$x\in\ker(A^{*})$, тогда
$\tmd{Re}(Ax,x)=0$ и повторяя предыдущие рассуждения получим
$x=0$.
\end{proof}

\begin{proof}(утверждения \ref{ST:Images2})
Можно считать, что
$\bigcap_{k=1}^{n}H_{k}=\bigcap_{k=1}^{n}H_{k}^{\bot}=0$.

Докажем
$(1)$. Поскольку
$A\in\mc{M}(P_{1},\ldots,P_{n})$, из леммы
\ref{L:Images} следует
$\tmd{Im}(A)\supset(\ker(A))^{\bot}$. Из леммы
\ref{L:Images2} получаем
$\ker(A)=0$. Поэтому
$\tmd{Im}(A)=H$, откуда
$\sum_{k=1}^{n}H_{k}=H$.

Докажем $(2)$.
Имеем:
$\tmd{Im}(A)=(\ker(A^{*}))^{\bot}=H$, откуда
$\sum_{k=1}^{n}H_{k}=H$.

Докажем
$(3)$. Пусть
$\gamma_{i,j}=-\beta_{i,j},\,i\neq j$. Поскольку
$Bs=0$, то
$\sum_{j=1}^{n}\beta_{i,j}s_{j}=0$,
$\alpha_{i,i}=\sum_{j\neq i}\gamma_{i,j}\dfrac{s_{j}}{s_{i}}$.
Из неравенств
(\ref{E:Images1}) для произвольного
$x\in H$ имеем:
\begin{align}\label{E:Images2}
&\tmd{Re}(Ax,x)\gs\sum_{i=1}^{n}\alpha_{i,i}\|P_{i}x\|^{2}-2\sum_{i<j}\gamma_{i,j}|(P_{i}x,P_{j}x)|=\\
&\sum_{i=1}^{n}\left(\sum_{j\neq i}\gamma_{i,j}s_{i}s_{j}\right)\|P_{i}x/s_{i}\|^{2}
-2\sum_{i<j}\gamma_{i,j}s_{i}s_{j}|(P_{i}x/s_{i},P_{j}x/s_{j})|.\notag
\end{align}
У нас
$\sum_{k=1}^{n}H_{k}^{\bot}=H$,
$s_{1},\ldots,s_{n}>0$,
$\Gamma_{\beta}$ связен. Поэтому из утверждения
\ref{ST:orth2} следует, что существует
$\ve_{1}>0$, такое, что для произвольных
$y_{k}\in H_{k},\,1\ls k\ls n$
\begin{equation*}
2\sum_{i<j}\gamma_{i,j}s_{i}s_{j}|(y_{i},y_{j})|\ls\sum_{i=1}^{n}\left(\sum_{j\neq i}\gamma_{i,j}s_{i}s_{j}-\ve_{1}\right)\|y_{i}\|^{2}.
\end{equation*}
Подставив в это неравенство
$y_{i}=P_{i}x/s_{i},\,1\ls i\ls n$, из неравенства
(\ref{E:Images2}) получим
$\tmd{Re}(Ax,x)\gs\ve_{1}\sum_{i=1}^{n}\|P_{i}x/s_{i}\|^{2}\gs\ve_{2}\sum_{i=1}^{n}\|P_{i}x\|^{2}$ для некоторого
$\ve_{2}>0$. Поскольку
$\sum_{k=1}^{n}H_{k}=H$, то
$\tmd{Re}(Ax,x)\gs\ve_{3}\|x\|^{2}$ для некоторого
$\ve_{3}>0$. Отсюда следует, что
$A$ обратим.
\end{proof}

Рассмотрим применения утверждения
\ref{ST:Images2}.
Пусть $\Gamma$ "--- связный граф с множеством вершин
$1,2,\ldots,n$. Будем писать
$i\sim j$ если
$i$ соединено с $j$ в $\Gamma$.
Пусть каждой паре вершин
$i,j$, соединённых ребром в
$\Gamma$, сопоставлены действительные числа
$\xi_{i,j},\xi_{j,i}$, причём
$\xi_{i,j}+\xi_{j,i}>0$. Для каждого
$i=1,2,\ldots,n$ определим
$\xi_{i}=(1/2)\sum_{j\sim i}(\xi_{i,j}+\xi_{j,i})$. Ясно, что
$\xi_{i}>0$ и
$\sum_{i=1}^{n}\xi_{i}=\sum_{i\sim j}\xi_{i,j}$. Определим
$A=\sum_{i=1}^{n}\xi_{i}P_{i}-\sum_{i\sim j}\xi_{i,j}P_{i}P_{j}$.

\begin{example}
Пусть
$E(\Gamma)=\{\{1,2\},\{2,3\},\ldots,\{n,1\}\}$, т.е.
$\Gamma$ "--- цикл. Пусть
$\xi_{1,2}=\xi_{2,3}=\ldots=\xi_{n,1}=1$,
$\xi_{2,1}=\xi_{3,2}=\ldots=\xi_{1,n}=0$.
Тогда
$\xi_{1}=\ldots=\xi_{n}=1$ и
$A=\sum_{i=1}^{n}P_{i}-\sum_{i=1}^{n}P_{i}P_{i+1}$, где обозначено
$P_{n+1}=P_{1}$.
\end{example}

В предыдущих обозначениях оператор
$A$ получается при выборе
$\alpha_{i,j}$ таким образом:
\begin{enumerate}
\item
$\alpha_{i,i}=\xi_{i},\,i=1,2,\ldots,n$,
\item
$\alpha_{i,j}=-\xi_{i,j}$ если $i\sim j$,
\item
$\alpha_{i,j}=0$ если $i\neq j$ и $i$ не соединено с $j$ в $\Gamma$.
\end{enumerate}
Тогда для
$\beta_{i,j}$ имеем:
\begin{enumerate}
\item
$\beta_{i,i}=\xi_{i},\,i=1,2,\ldots,n$,
\item
$\beta_{i,j}=-\frac{1}{2}(\xi_{i,j}+\xi_{j,i})$ если $i\sim j$,
\item
$\beta_{i,j}=0$ если $i\neq j$ и $i$ не соединено с $j$ в $\Gamma$.
\end{enumerate}
На векторе
$t=(t_{1},\ldots,t_{n})\in\mathbb{R}^{n}$ квадратичная форма, порождённая
$B$, равна
\begin{equation*}
(Bt,t)=\sum_{i=1}^{n}\xi_{i}t_{i}^{2}-\sum_{\{i,j\}\in E(\Gamma)}(\xi_{i,j}+\xi_{j,i})t_{i}t_{j}=
\sum_{\{i,j\}\in E(\Gamma)}\frac{1}{2}(\xi_{i,j}+\xi_{j,i})(t_{i}-t_{j})^{2}.
\end{equation*}
Поэтому
$B$ неотрицательно определена,
$0\in\sigma(B)$ имеет кратность $1$, соответствующий собственный вектор:
$(1,1,\ldots,1)$ (это следует из связности $\Gamma$). Также ясно, что
$\Gamma_{\beta}=\Gamma$. Таким образом, условия
$\textbf{(B1),(B2)}$ выполнены.

\begin{statement}
Следующие утверждения равносильны:
\begin{enumerate}
\item
$\sum_{k=1}^{n}H_{k},\sum_{k=1}^{n}H_{k}^{\bot}$ замкнуты,
\item
$\tmd{Im}(A)=(\sum_{k=1}^{n}H_{k})\bigcap(\bigcap_{k=1}^{n}H_{k})^{\bot}$,
\item
$\tmd{Im}(A)$ замкнут.
\end{enumerate}
\end{statement}
\begin{proof}
Разложим
$H=\bigcap_{k=1}^{n}H_{k}^{\bot}\oplus\bigcap_{k=1}^{n}H_{k}\oplus\wt{H}$, тогда
$H_{k}=0\oplus\bigcap_{k=1}^{n}H_{k}\oplus\wt{H}_{k}$, где
$\wt{H}_{k}$ "--- подпространство $\wt{H}$.
Ясно, что
$\bigcap_{k=1}^{n}\wt{H}_{k}^{\bot}=\bigcap_{k=1}^{n}\wt{H}_{k}=0$
(тут $\wt{H}_{k}^{\bot}$ обозначает ортогональное дополнение
$\wt{H}_{k}$ в $\wt{H}$). Обозначим
$\wt{P}_{k}$ ортопроектор на
$\wt{H}_{k}$ в пространстве
$\wt{H}$. Тогда
$P_{k}=0\oplus I\oplus\wt{P}_{k}$. Поскольку
$\sum_{i=1}^{n}\xi_{i}=\sum_{i\sim j}\xi_{i,j}$, то
$A=0\oplus 0\oplus\wt{A}$, где
$\wt{A}=\sum_{i=1}^{n}\xi_{i}\wt{P}_{i}-\sum_{i\sim j}\xi_{i,j}\wt{P}_{i}\wt{P}_{j}$.
Таким образом, достаточно доказать нужное утверждение для подпространств
$\wt{H}_{k}$ пространства $\wt{H}$.
Поэтому мы сразу будем считать, что
$\bigcap_{k=1}^{n}H_{k}=\bigcap_{k=1}^{n}H_{k}^{\bot}=0$.

\textbf{$(1)\Rightarrow(2),(3)$}
Поскольку
$\sum_{k=1}^{n}H_{k}=\sum_{k=1}^{n}H_{k}^{\bot}=H$, то нужное следует из утверждения
\ref{ST:Images2}.

\textbf{$(2)\Rightarrow (1)$}
Имеем:
$\tmd{Im}(A)=\sum_{k=1}^{n}H_{k}$. Из утверждения
\ref{ST:Images2} следует замкнутость
$\sum_{k=1}^{n}H_{k}$, поэтому
$\sum_{k=1}^{n}H_{k}=H$. Поэтому
$\tmd{Im}(A)=H$. Для
$i=1,2,\ldots,n$ обозначим $Q_{i}$ ортопроектор на
$H_{i}^{\bot}$. Тогда
$P_{i}=I-Q_{i}$. Поэтому
\begin{equation*}
A=\sum_{i=1}^{n}\xi_{i}(I-Q_{i})-\sum_{i\sim j}\xi_{i,j}(I-Q_{i})(I-Q_{j}).
\end{equation*}
Поскольку
$\sum_{i=1}^{n}\xi_{i}=\sum_{i\sim j}\xi_{i,j}$, то при раскрытии скобок слагаемые, кратные
$I$, сократятся, а поэтому
$\tmd{Im}(A)\subset\sum_{k=1}^{n}H_{k}^{\bot}$. Отсюда следует, что
$\sum_{k=1}^{n}H_{k}^{\bot}=H$.

\textbf{$(3)\Rightarrow(1)$}
Из леммы
\ref{L:Images2} следует
$\ker(A^{*})=0$, поэтому
$\tmd{Im}(A)=H$. Отсюда
$\sum_{k=1}^{n}H_{k}=H$. Повторив рассуджения при доказательстве
\textbf{$(2)\Rightarrow(1)$}, получим
$\sum_{k=1}^{n}H_{k}^{\bot}=H$.
\end{proof}

\subsection{Сумма $n$ подпространств представима в виде суммы пары подпространств}\label{SS:sumnequalsum2}

Рассмотрим следующий вопрос: пусть $H_{1},\ldots,H_{n}$ "---
подпространства
$H$. Можно ли сумму $H_{1}+\ldots+H_{n}$ представить в виде суммы двух подпространств, т.е.
существуют ли подпространства $M_{1},M_{2}$, для которых
$H_{1}+\ldots+H_{n}=M_{1}+M_{2}$ ?
В случае сепарабельного пространства
$H$ утвердительный ответ следует из теоремы 2.6 работы \cite{Fillmore}.
Однако упомянутая теорема неверна в случае несепарабельного пространства
(см. замечание после её доказательства).
Тем не менее,
мы покажем, что ответ утвердительный в случае произвольного гильбертова пространства
$H$ (отметим, что, например, гильбертово пространство почти периодических функций на $\mathbb{R}$,
имеющее размерность континуум, имеет многочисленные применения в теории дифференциальных
уравнений и математической физике).

\begin{theorem}\label{T:sumnequalsum2}
Для произвольных подпространств
$H_{1},\ldots,H_{n}$ существуют два подпространства
$M_{1},M_{2}$, такие, что
$H_{1}+\ldots+H_{n}=M_{1}+M_{2}$.
\end{theorem}

Для доказательства нам необходимы несколько лемм.
Следующая лемма мотивирована леммой
3.2 в \cite{Fillmore}.

\begin{lemma}\label{L:lemma2}
Пусть
$A,B$ "--- неотрицательные самосопряженные операторы,
$E_{A},E_{B}$ "--- их спектральные меры. Пусть
$\tmd{Im}(A)\subset\tmd{Im}(B)$. Тогда существует
$K>0$ такое, что для всех
$\alpha>0$
\begin{equation*}
\dim E_{A}([\alpha,+\infty))H\ls\dim E_{B}([\alpha/K,+\infty))H.
\end{equation*}
\end{lemma}
\begin{proof}
Достаточно доказать, что существует
$K>0$, для которого
\begin{equation*}
(E_{A}([\alpha,+\infty))H)\cap(E_{B}([\alpha/K,+\infty))H)^{\bot}=0
\end{equation*}
для всех
$\alpha>0$.
Из теоремы Р.Дугласа следует, что существует оператор
$C\in\mc{B}(H)$, для которого $A=BC$, а поэтому $A=C^{*}B$.
Пусть
$x\in E_{A}([\alpha,+\infty))H$. Тогда
$\|Ax\|\gs\alpha\|x\|$,
$\|C^{*}Bx\|\gs\alpha\|x\|$, а поэтому
$\|Bx\|\gs\dfrac{\alpha}{\|C\|}\|x\|$.
Поэтому нам подойдёт
$K=2\|C\|$.
\end{proof}

\begin{lemma}\label{L:lemma3}
Пусть
$A$ "--- неотрицательный самосопряженный оператор в $H$, причём
$\ker A=0$,
$E$ "--- его спектральная мера.
Линеал
$\tmd{Im}(A)$ можно представить в виде суммы пары подпространств тогда и только тогда, когда
\begin{equation*}
\dim E([\ve,+\infty))H=\dim H
\end{equation*}
для некоторого $\ve>0$.
\end{lemma}
\begin{proof}
Пусть сначала
$\dim E([\ve,+\infty))H=\dim H$ для некоторого $\ve>0$. Можем считать, что
$\ve<1$. Разложим
$H=E([0,\ve))H\oplus E([\ve,\infty))H$. Относительно этого ортогонального разложения
$A=A_{1}\oplus A_{2}$, где оператор
$0\ls A_{1}\ls\ve I$, а
$A_{2}$ обратим. Поэтому
$\tmd{Im}(A)=\tmd{Im}(A_{1})\oplus E([\ve,\infty))H$. Ясно, что
$\dim E([0,\ve))H\ls \dim E([\ve,\infty))H$.

Таким образом, нам достаточно доказать следующее:
в гильбертовом пространстве
$M\oplus M\oplus N$ линеал
$\tmd{Im}(B)\oplus M\oplus N$ (тут оператор
$B:M\to M$) можно представить в виде суммы пары подпространств $M_{1},M_{2}$.
Ясно, что
$M_{1}=\{(Bx,(I-B)x,0),\,x\in M\}$,
$M_{2}=0\bi M\bi N$ "--- искомые подпространства.

Пусть теперь
$\tmd{Im}(A)$ можно представить в виде суммы пары подпространств.
Если
$H$ конечномерно, то нужное утверждение очевидно, поэтому далее
$H$ бесконечномерно. Пусть
$\tmd{Im}(A)=H_{1}+H_{2}$,
$P_{1},P_{2}$ "--- соответствующие ортопроекторы.
Поскольку
$\ker A=0$, то $H_{1}^{\bot}\cap H_{2}^{\bot}=0$. Определим подпространства
$H_{1,1}=H_{1}\cap H_{2},\,H_{1,0}=H_{1}\cap H_{2}^{\bot},\,H_{0,1}=H_{1}^{\bot}\cap H_{2}$.
Применим для пары подпространств
$H_{1},H_{2}$ теорему
\ref{T:spectraltheorem}, используем её обозначения.
В ортогональном разложении
(\ref{E:rozkladH2}) компонента
$H_{1}^{\bot}\cap H_{2}^{\bot}$ отсутствует.

Тогда
\begin{equation*}
H_{1}+H_{2}=H_{1,1}\oplus H_{1,0}\oplus H_{0,1}\oplus K\oplus\tmd{Im}(\sqrt{I-a})
\end{equation*}
является образом оператора
$B=I_{H_{1,1}}\oplus I_{H_{1,0}}\oplus I_{H_{0,1}}\oplus I_{K}\oplus \sqrt{I-a}$. Из леммы
\ref{L:lemma2} ($\tmd{Im}(B)\subset\tmd{Im}(A)$) следует существование
$\ve>0$, для которого
$\dim E([\ve,\infty))H\gs\dim H_{1,1}+\dim H_{1,0}+\dim H_{0,1}+\dim K=\dim H$, т.е.
$\dim E([\ve,\infty))H=\dim H$.
\end{proof}

\begin{proof}(теоремы \ref{T:sumnequalsum2})
Можно считать, что
$\ol{H_{1}+\ldots+H_{n}}=H$.
В случае конечномерного
$H$ утверждение теоремы очевидно, поэтому далее $H$
бесконечномерно. Пусть размерность
$\dim H_{1}$ "--- наибольшая из $\dim H_{i},\,1\ls i\ls n$.
Тогда
$\dim H_{1}=\dim H$.
Определим оператор
$A=\sqrt{P_{1}+\ldots+P_{n}}$, пусть
$E$ "--- его спектральная мера. Поскольку
$\tmd{Im}(P_{1})\subset\tmd{Im}(A)$, то из леммы
\ref{L:lemma2} следует, что существует
$\ve>0$, для которого
$\dim E([\ve,+\infty))\gs\dim H_{1}=\dim H$. Поэтому
$\dim E([\ve,+\infty))=\dim H$. Теперь из леммы
\ref{L:lemma3} следует нужное утверждение.
\end{proof}

\subsection{Когда из замкнутости $H_{1}+\ldots+H_{n}$ следует замкнутость $H_{1}+\ldots+H_{m}$
($m<n$ фиксировано)}

\begin{statement}
Пусть
$M_{1},M_{2}$ "--- гильбертовы пространства,
$a:M_{1}\to H,\,b:M_{2}\to H$ "--- линейные непрерывные операторы, причём
$\tmd{Im}(a)\bigcap \tmd{Im}(b),\,\tmd{Im}(a)+\tmd{Im}(b)$ "--- подпространства
в
$H$. Тогда
$\tmd{Im}(a),\,\tmd{Im}(b)$ "--- подпространства.
\end{statement}
\begin{proof}
1. (см. теорему 2.3 в \cite{Fillmore})

Сначала докажем требуемое утверждение при условии $\,\tmd{Im}(a)\bigcap\tmd{Im}(b)=0$.
Без ограничения общности можно считать, что
$\ker(a)=0$ и $\ker(b)=0$. Определим оператор
$c:M_{1}\oplus M_{2}\to \tmd{Im}(a)+\tmd{Im}(b)$ равенством
$c(x,y)=ax+by$. Тогда
$c$ непрерывен и биективен, а потому обратим. Поэтому
существует
$\ve>0$ такое, что для произвольных
$x\in M_{1},\,y\in M_{2}$ выполнено:
$\|ax+by\|^{2}\gs\ve^{2}(\|x\|^{2}+\|y\|^{2}).$
Подставив $y=0$, получим
$\|ax\|\gs\ve\|x\|,\,x\in M_{1}$, а поэтому
$\tmd{Im}(a)$ "--- подпространство. Аналогично
$\tmd{Im}(b)$ "--- подпространство.

2. Теперь рассмотрим общий случай. Обозначим $M=\tmd{Im}(a)\cap\tmd{Im}(b)$.
Разложим
$M_{1}=a^{-1}(M)\oplus M_{1}^{\prime}$, и определим оператор
$a^{\prime}:M_{1}^{\prime}\to H$ равенством
$a^{\prime}x=ax,\,x\in M_{1}^{\prime}$. Тогда
$\tmd{Im}(a^{\prime})\cap\tmd{Im}(b)=0$ и
$\tmd{Im}(a^{\prime})+\tmd{Im}(b)=\tmd{Im}(a)+\tmd{Im}(b)$ "--- подпространство,
а поэтому
$\tmd{Im}(b)$ "--- подпространство. Аналогично
$\tmd{Im}(a)$ "--- подпространство.
\end{proof}

\begin{conclusion}
Пусть
$H_{1},\ldots,H_{n}$ "--- подпространства $H$, и
$H_{1}+\ldots+H_{n}$ "--- подпространство. Если для некоторого
$m<n$
$(H_{1}+\ldots+H_{m})\cap (H_{m+1}+\ldots+H_{n})$ "--- подпространство, то
$H_{1}+\ldots+H_{m},H_{m+1}+\ldots+H_{n}$ "--- подпространства.
\end{conclusion}

\begin{conclusion}
Пусть
$H_{1},\ldots,H_{n}$ "--- подпространства $H$, сумма которых
$H_{1}+\ldots+H_{n}$ замкнута. Предположим, что для любого
$I\subset \{1,2,\ldots,n\},\,|I|<n$ существует
$J\subset \{1,2,\ldots,n\},\,J\nsubseteq I$, такое, что
$(\sum_{i\in I}H_{i})\cap(\sum_{j\in J}H_{j})$ "--- подпространство.
Тогда для всякого
$I\subset\mathbb{N}_{n}$ сумма $\sum_{i\in I}H_{i}$ "--- подпространство.

Доказательство получается применением индукции "сверху вниз"\,  по
$|I|$, база индукции "--- $|I|=n$.
\end{conclusion}

\end{document}